\theoremstyle{definition}
\newtheorem* {theorem*}{Theorem}
\theoremstyle{definition}
\newtheorem{theorem}{Theorem}[section]
\theoremstyle{definition}
\theoremstyle{definition}
\newtheorem{observation}{Observation}[section]
\theoremstyle{definition}
\newtheorem{lemma}{Lemma}[section]
\theoremstyle{definition}
\theoremstyle{definition}
\theoremstyle{definition}
\newtheorem{conjecture}{Conjecture}[section]
\newtheorem{proposition}{Proposition}[section]
\newtheorem{corollary}{Corollary}[section]
\newtheorem* {remark}{Remark}
\newtheorem{example}{Example}[section]
\theoremstyle{definition}
\theoremstyle{definition}
\newtheorem* {remarks}{Remarks}
\numberwithin{equation}{section}
\def\({\left(}
\def\){\right)}
   \newcommand{\FF}{\mathbb{F}}  \newcommand{\CC}{\mathbb{C}}  \newcommand{\QQ}{\mathbb{Q}}    
  \newcommand{\cO}{\mathcal{O}} 
\newcommand{\cR}{\mathcal{R}}
        \def\Ind{\mathrm{Ind}} \def\GL{\mathrm{GL}}   \def\Res{\mathrm{Res}}       
\def\Irr{\mathrm{Irr}}  \def\wt{\widetilde}
   \newcommand{\fkn}{\mathfrak{n}}
\newcommand{\h}{\mathfrak{h}}
\newcommand{\one}{{1\hspace{-.11cm} 1}}
\def\fk{\mathfrak}
\def\barr{\begin{array}}
\def\earr{\end{array}}
\def\ba{\begin{aligned}}
\def\ea{\end{aligned}}
\def\be{\begin{equation}}
\def\ee{\end{equation}}
\def\ol{\widehat}
\def\olfkl{\ol{\fk l}}
\def\olfks{\ol{\fk s}}
\def\oll{\ol{L}}
\def\ols{\ol{S}}
\def\UT{\mathrm{UT}}
\def\fkt{\fk{u}}
\def\logpsi{\psi^{\exp}}
\def\tobedecided{}
\def\cF{\mathrm{F}}
\def\exp{\mathrm{Exp}}
\renewcommand{\@makefnmark}{\mbox{\textsuperscript{}}}
\begin{document}
\title{Iterative character constructions for algebra groups}
\author{Eric Marberg\footnote{This research was conducted with government support under
the Department of Defense, Air Force Office of Scientific Research, National Defense Science
and Engineering Graduate (NDSEG) Fellowship, 32 CFR 168a.} \\ Department of Mathematics \\ Massachusetts Institute of Technology \\ \tt{emarberg@math.mit.edu}}
\date{}

\maketitle

\setcounter{tocdepth}{2}
%\tableofcontents

\begin{abstract}
We construct a family of orthogonal characters  of an algebra group which decompose the supercharacters defined by Diaconis and Isaacs \cite{DI}.  
Like supercharacters, these characters are given by nonnegative integer linear combinations of Kirillov functions and are induced from linear supercharacters of certain algebra subgroups.  We derive a formula for these characters and give a condition for their irreducibility; generalizing a theorem of Otto \cite{O}, we also show that each such character has the same number of Kirillov functions and irreducible characters as constituents.  In proving these results, we observe as an application how a recent computation by Evseev \cite{E} implies that every irreducible character of the unitriangular group $\UT_n(q)$ of unipotent $n\times n$ upper triangular matrices over a finite field with $q$ elements is a Kirillov function if and only if $n\leq 12$.  As a further application, we discuss some more general conditions showing that Kirillov functions are  characters, and describe some results related to counting the irreducible constituents of supercharacters.
\end{abstract}

\section{Introduction}

Let $\fkn$ be a nilpotent, finite-dimensional, associative algebra over a finite field $\FF_q$ and let $G=1+\fkn$ denote the \emph{algebra group} of formal sums $1+X$ for $X \in \fkn$.  
Algebra groups comprise a relatively well-behaved class of $p$-groups, including as prototypical examples the Sylow $p$-subgroups of the general linear group $\GL(n,\FF_{q})$, where $p$ is the characteristic of $\FF_q$.  Their representations have several notable properties; in particular, by theorems of Isaacs \cite{I95} and Halasi \cite{H} each irreducible representation of an algebra group has $q$-power degree  and is induced from a linear representation of an algebra subgroup, by which we mean an algebra group attached to a subalgebra of $\fkn$.  If the characteristic of $\FF_q$ exceeds the nilpotency class of $\fkn$, then a version of Kirillov's orbit method for finite groups succeeds in constructing all irreducible representations of $G$ (see \cite{Sangroniz}).
When the characteristic of $\FF_q$ is small, however, a general description of the irreducible representations of $G$ remains unknown. 

Addressing this problem, Diaconis and Isaacs introduced in \cite{DI} a construction  which gives a useful approximation to the irreducible representations of an arbitrary algebra group. 
The character version of this goes as follows.  
Write $\fkn^*$ for the dual space of $\FF_q$-linear maps $\fkn \to \FF_q$ and fix  a nontrivial homomorphism $\theta : \FF_q^+\to \CC^\times$ so that the irreducible characters of the additive group $\fkn$ are precisely the maps $\theta \circ \lambda$ for $\lambda \in \fkn^*$.  
%
%
%The great insight of Diaconis and Isaacs \cite{DI} was that i
If one defines $\fk l_\lambda \subset \fkn$ as the left kernel of the bilinear form 
\[  \barr{cccc} B_\lambda : & \fkn \times \fkn & \to & \FF_q \\ & (X,Y) & \mapsto & \lambda(XY)\earr\] then $\fk l_\lambda$ is a subalgebra, and the function $\theta_\lambda(1+X) \overset{\mathrm{def}}= \theta\circ \lambda(X)$ defines a linear character of  the corresponding algebra subgroup $L_\lambda \overset{\mathrm{def}} = 1 + \fk l_\lambda$.
Diaconis and Isaacs define the \emph{supercharacter} indexed by $\lambda \in \fkn^*$ as the induced character $\chi_\lambda\overset{\mathrm{def}}=\Ind_{L_\lambda}^G(\theta_\lambda)$.  

%Diaconis and Isaacs introduced this construction in \cite{DI}, and it provides an extremely useful, general method of obtaining characters of an arbitrary algebra group.  
While the characters $\chi_\lambda$ are often reducible, their constituents partition the set of all irreducible characters of $G$, and they are constant on certain unions of conjugacy classes.  These and other properties make them an often practical substitute for the typically unknown irreducible characters of $G$.  Nevertheless, the supercharacters of $G$ come from a coarse enough construction that they often fail to shed much light on interesting properties of the group's irreducible representations.

This work concerns an extension of Diaconis and Isaacs's construction which produces a typically  much larger family of orthogonal characters of an algebra group.  
%
%
%The crucial insight allowing us to give constructive proofs of our results concerning the unitriangular group $\UT_n(q)$ is that by pushing Diaconis and Isaacs's construction one step further, one can decompose the supercharacters of an algebra group into a generally much larger family of  orthogonal characters.  
%
%
Our extension derives from the following observation.   Given $\lambda \in \fkn^*$, define $\fk s_\lambda$ as the left kernel of the restriction of $B_\lambda$ from $\fkn\times \fkn$ to the domain $\fkn \times \fk l_\lambda$.  Then $\fk s_\lambda$ is a subalgebra, and induction from the algebra subgroup $S_\lambda \overset{\mathrm{def}}=1+\fk s_\lambda$ to $G$ defines a bijection 
\be\label{1} \biggl\{ \text{Irreducible constituents of $\Ind_{L_\lambda}^{S_\lambda}(\theta_\lambda)$}\biggr\} \to \biggl\{\text{Irreducible constituents of $\chi_\lambda = \Ind_{L_\lambda}^G(\theta_\lambda)$}\biggr\}.\ee
Andr\'e and Nicol\'as first proved this fact in an equivalent but slightly less elementary form in \cite{AndreAdjoint}.  
We wish to replace the supercharacter $\chi_\lambda$ with a distinguished constituent to be called $\xi_\lambda$.  
At present we have no way in general to describe the irreducible constituents of the induced character $\Ind_{L_\lambda}^{S_\lambda}(\theta_\lambda)$.  However, $\Ind_{L_\lambda}^{S_\lambda}(\theta_\lambda)$ does have one obvious constituent:  namely, the supercharacter of $S_\lambda$ indexed by the restriction $\mu \in (\fk s_\lambda)^*$ of the linear map $\lambda: \fkn\to \FF_q$.   Thus, given an algebra group $G$ with a supercharacter $\chi_\lambda$, one can produce an algebra subgroup $S_\lambda \subset G$ with a supercharacter $\chi_\mu$, such that $\Ind_{S_\lambda}^G(\chi_\mu)$ is a constituent of $\chi_\lambda$.  
By applying this ability inductively, one obtains a descending chain of algebra subgroups $G\supset S_\lambda \supset \dots$, and we define $\xi_\lambda$ as the character  of $G$ induced from the supercharacter attached to the chain's terminal element.   

  In Section \ref{3} below we repeat this construction in greater detail and prove a number of useful properties concerning the characters $\xi_\lambda$.  For example, we show that $\xi_\lambda$ has a formula given by a sum of functions $\theta_\nu$ over $\nu$ in a union of $k$ coadjoint orbits in $\fkn^*$, where $k$ is the number of the character's irreducible constituents.   Likewise, we describe  when two characters $\xi_\lambda$, $\xi_\nu$ are distinct and how to compute the inner product $\langle \xi_\lambda, \xi_\nu \rangle_G$.  We mention that Andr\'e and Nicol\'as prove a few results concerning the existence of the characters $\{\xi_\lambda\}$ in \cite{AndreAdjoint}; their derivations come mostly from recursive applications of the bijection (\ref{1}), however.  Our contribution beyond this earlier work is two-fold.  First, we  
 provide a more elementary definition of $\xi_\lambda$ in terms of the bilinear form $B_\lambda$, and second, we employ this definition to explicitly compute various properties of the characters $\xi_\lambda$.

% In Section \ref{appl} we work through the technical details required to construct a particular character $\xi_\lambda$ of the unitriangular group $\UT_n(q)$, and then prove that its constituents are precisely the exotic irreducible characters described in the theorem above.
\def\Ad{\mathrm{Ad}}

Two other topics pervade and motivate our results: first, the question of how to determine whether a Kirillov function is a character, and second, the problem of how to count the irreducible constituents of supercharacters.  We recall that there is a coadjoint action of the algebra group $G=1+\fkn$ on $\fkn^*$, given by $g : \lambda \mapsto \lambda \circ \Ad(g)^{-1}$ where $\Ad(g)(X) = gXg^{-1}$ for $g \in G$ and $X \in \fkn$.
For each $\lambda\in \fkn^*$, the corresponding \emph{Kirillov function} $\psi_\lambda : G \to \CC$ is the complex-valued function 
\[ \psi_\lambda(g) =|\Omega|^{-1/2} \sum_{\mu \in \Omega} \theta\circ \mu(g-1),\qquad\text{where $\Omega\subset \fkn^*$ is the coadjoint orbit of $\lambda$.}\]  We describe some conditions which imply that  $\psi_\lambda$ is a character, and using a recent computational result due to Evseev \cite{E}, prove this theorem:

\begin{theorem*}
Fix a prime power $q>1$ and let $\UT_n(q)$ denote the group of unipotent $n\times n$ upper triangular matrices over $\FF_q$.  Then the set of irreducible characters of $\UT_n(q)$  is equal to the set of Kirillov functions of $\UT_n(q)$ if and only if $n\leq 12$.\end{theorem*}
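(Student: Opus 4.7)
The plan has three main steps. First, I would reduce the theorem to the claim that every Kirillov function of $\UT_n(q)$ is a character if and only if $n\leq 12$. This reduction rests on two observations: (i) the Kirillov functions $\{\psi_\lambda\}$ are orthonormal as class functions on $G=1+\fkn$, by a direct calculation of $\langle \psi_\lambda,\psi_\mu\rangle_G$ that reduces to Fourier orthogonality of the additive characters $\theta\circ\nu$ on $\fkn$; and (ii) the number of coadjoint orbits of $G$ on $\fkn^*$ equals the number of conjugacy classes of $G$, which follows from Burnside's lemma together with the equality $|\mathrm{Fix}_g(\fkn)| = |\mathrm{Fix}_g(\fkn^*)|$ (for conjugation vs.\ coadjoint action) and the standard bijection $g\leftrightarrow g-1$ between conjugacy classes of $G$ and $\mathrm{Ad}(G)$-orbits on $\fkn$. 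Together (i) and (ii) show that the Kirillov functions form an orthonormal basis of the class function space, so the two sets coincide exactly when every $\psi_\lambda$ is itself a character.

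For the ``if'' direction ($n\leq 12$), I would appeal to Evseev's computation \cite{E}, which according to the excerpt enumerates the irreducible characters of $\UT_n(q)$ in enough detail for small $n$ to verify that every Kirillov function is a character when $n\leq 12$---possibly in combination with the criteria for $\psi_\lambda$ to be a character developed in the earlier sections of the paper.

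For the ``only if'' direction, Evseev's computation exhibits an irreducible character $\chi$ of $\UT_{13}(q)$ that is not a Kirillov function. To extend this to $n>13$, I would inflate $\chi$ through the quotient $\UT_n(q)\twoheadrightarrow \UT_{13}(q)$ induced by the two-sided ideal $I\subset\fk{u}_n$ of strictly upper triangular matrices $X$ with $X_{ij}=0$ for all $i,j\leq 13$. The inflation $\wt\chi$ is irreducible, and it is not a Kirillov function: if $\wt\chi=\psi_\lambda$ for some $\lambda\in\fk{u}_n^*$, then the constancy of $\wt\chi$ on $1+I$ combined with the nontriviality of $\theta$ forces the coadjoint orbit $\Omega$ of $\lambda$ to lie inside $I^\perp\cong(\fk{u}_n/I)^*$; by $G$-stability of $I$, this $\Omega$ descends to a coadjoint orbit on $\fk{u}_{13}^*$ whose Kirillov function agrees with $\chi$ under the inflation identification, contradicting the choice of $\chi$.

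The main obstacle is extracting the precise output of Evseev's computation---namely, locating and verifying in detail the non-Kirillov character of $\UT_{13}(q)$ (or equivalently, identifying a specific Kirillov function of $\UT_{13}(q)$ that fails to be a character). Once this base case is in hand, the inflation argument reduces all of $n>13$ to $n=13$, and the orthonormality reduction dispatches the forward direction as soon as Evseev's tabulation confirms that every Kirillov function is a character for $n\leq 12$.
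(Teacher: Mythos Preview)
Your proposal is correct and takes essentially the same route as the paper: both rest entirely on Evseev's computation for the base cases, use orthonormality and the equality of orbit counts to reduce the statement to ``every Kirillov function is a character,'' and propagate the $n=13$ failure to all $n>13$ by inflation through a quotient $\UT_n(q)\twoheadrightarrow\UT_{13}(q)$ (your explicit inflation argument is precisely the content of the paper's Observation~\ref{inflation}).

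The one place the paper is more specific than your outline is the bridge between Evseev's output and Kirillov functions. Evseev's result is phrased not in terms of Kirillov functions but in terms of \emph{well-induced} characters---irreducible characters of the form $\Ind_H^G(\theta_\mu)$ for an algebra subgroup $H$ and a linear supercharacter $\theta_\mu$ of $H$. The paper's Proposition~\ref{well} shows that a well-induced character is irreducible if and only if it equals some Kirillov function $\psi_\lambda$; this is what converts Evseev's statement ``every irreducible of $\UT_n(q)$ for $n\le 12$ is well-induced'' into ``every irreducible is a Kirillov function.'' Your phrase ``possibly in combination with the criteria for $\psi_\lambda$ to be a character developed in the earlier sections of the paper'' points at exactly this proposition, so there is no gap---only a missing name for the lemma you would need.
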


We also consider \emph{polynomial Kirillov functions}, by which we mean compositions of Kirillov functions with polynomial bijections $G\to G$ of the form $1+X\mapsto 1+X + \sum_{k\geq 2} a_k X^k$ with $a_k \in \FF_q$.  Such functions are of interest because if the polynomial bijection is chosen as the inverse of an appropriately truncated exponential map, then the resulting \emph{exponential Kirillov functions} give all irreducible characters of $G$ when the characteristic of $\FF_q$ exceeds the nilpotency class of $\fkn$  (see \cite{Sangroniz}). % In fact, they are precisely the irreducible characters of the algebra group $G=1+\fkn$ if $\fkn^p =0$  or if $G =\UT_n(q)$ and $n<2p$ (see \cite{Sangroniz}). 
 Generalizing a result of Otto \cite{O}, we prove the following:

\begin{theorem*}
A polynomial Kirillov function attached to the coadjoint orbit of some $\lambda \in \fkn^*$ is a complex linear combination of the irreducible constituents of the character $\xi_\lambda$.
\end{theorem*}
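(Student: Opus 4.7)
The plan is to leverage the $\theta$-formula for $\xi_\lambda$ established earlier in the paper, which expresses
\[
\xi_\lambda \;=\; \sum_{\nu \in \cF} c_\nu\, \theta_\nu
\]
with $\cF \subset \fkn^*$ a union of $k$ coadjoint orbits and $k$ the number of irreducible constituents of $\xi_\lambda$. A $\CC$-linear combination $\sum_{\nu\in\cF} d_\nu \theta_\nu$ is a class function on $G$ iff the coefficients $d_\nu$ are constant along each coadjoint orbit in $\cF$, carving out a $k$-dimensional space. Combined with the paper's earlier count matching the number of Kirillov function constituents of $\xi_\lambda$ with $k$, this identifies the $\CC$-span $W$ of the irreducible constituents of $\xi_\lambda$ as the intersection of the class functions on $G$ with $\mathrm{span}_\CC\{\theta_\nu:\nu\in\cF\}$.

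Given this description of $W$, the theorem reduces to checking (i) that $\psi_\lambda^{\mathrm{poly}}$ is a class function and (ii) that $\psi_\lambda^{\mathrm{poly}}$ lies in $\mathrm{span}_\CC\{\theta_\nu : \nu \in \cF\}$. Part (i) is immediate: the polynomial bijection $\phi(1+X) = 1 + X + \sum_{k\geq 2}a_k X^k$ commutes with conjugation in $G$ because $(gXg^{-1})^k = gX^kg^{-1}$, so pullback by $\phi$ preserves class-functionality, and $\psi_\lambda^{\mathrm{poly}} = \psi_\lambda \circ \phi$ inherits class-functionality from $\psi_\lambda$.

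Part (ii) is the technical core. Writing $\phi(1+X) - 1 = X + P(X)$ with $P(X) = \sum_{k\geq 2} a_k X^k$, one has
\[
\psi_\lambda^{\mathrm{poly}}(1+X) \;=\; |\Omega|^{-1/2} \sum_{\nu \in \Omega} \theta(\nu(X))\, \theta(\nu(P(X))),
\]
and I would Fourier-expand $X\mapsto\theta(\nu(P(X)))$ over the finite additive group $(\fkn,+)$, obtaining an expansion of $\psi_\lambda^{\mathrm{poly}}$ in the basis $\{\theta_{\nu'}\}_{\nu'\in\fkn^*}$. The crucial step is to verify that after summing over $\nu\in\Omega$, only $\theta_{\nu'}$'s with $\nu'\in\cF$ appear with nonzero coefficient. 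The expected route is to exploit the $G$-equivariance $P(gXg^{-1}) = gP(X)g^{-1}$ together with a substitution in $X$ to absorb the coadjoint action, reducing the support question to a statement about Gauss-type sums over the coadjoint orbits of $\fkn^*$ vanishing outside $\cF$.

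The main obstacle is precisely this vanishing: one must show that the combinatorial data defining $\cF$ --- the iterated left kernels of the bilinear forms $B_\mu$ along the chain $G\supset S_\lambda\supset\cdots$ --- is compatible with the polynomial perturbation $P$, so that the nonlinear corrections do not push Fourier mass outside $\cF$. If a direct treatment of this proves too intricate, an alternative plan is to induct on the length of the defining chain, with base case $\xi_\lambda = \chi_\lambda$ (a polynomial-bijection extension of Otto's original theorem for supercharacters); the reduction step would hinge on relating $\psi_\lambda^{\mathrm{poly}}$ on $G$ to polynomial Kirillov functions of $S_\lambda$ via induction, itself a statement that would require separate verification.
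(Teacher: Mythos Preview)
Your primary approach has a real gap at the vanishing step. The equivariance $P(gXg^{-1}) = gP(X)g^{-1}$ only yields that the Fourier coefficients $\langle\psi_\lambda^\cF,\theta_{\nu'}\rangle$ are constant on coadjoint orbits --- i.e., the class-function statement you already have; it says nothing about the support being confined to $\Xi_\lambda$. The sums $\sum_X \theta\bigl((\nu-\nu')(X)\bigr)\,\theta\bigl(\nu(P(X))\bigr)$ have no visible reason to vanish for $\nu'\notin\Xi_\lambda$, since $\Xi_\lambda$ is carved out by an iterated left-kernel construction with no direct tie to the higher monomials of $P$. The paper does not attempt this route. (Your identification $W=V$ is fine once one supplies an inclusion --- e.g.\ $V\subset W$, which follows because the proof of Proposition~\ref{less} already shows that each ordinary Kirillov function indexed by $\Xi_\lambda$ lies in $W$ --- but having reformulated the theorem as $\psi_\lambda^\cF\in V$, you have not made the problem any easier.)

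Your alternative plan is essentially the paper's route, with two corrections. First, there is no induction on chain length: one passes in a single step to $\ols_\lambda$ via $\Ind_{\ols_\lambda}^G(\psi_\mu^\cF)=\Ind_{\ols_\lambda}^G(\psi_\mu)^\cF=\psi_\lambda^\cF$ for $\mu=\lambda{\downarrow}\olfks_\lambda$, reducing to the fully ramified base case $\xi_\mu=\chi_\mu$ on $\ols_\lambda$. Second --- and this is the missing idea --- you have not identified the mechanism for that base case. The paper proves a dedicated lemma: for \emph{linear} $\eta\in\Irr(G)$, $\langle\chi_\lambda,\eta\rangle_G=0$ implies $\langle\psi_\lambda^\cF,\eta\rangle_G=0$. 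When $\lambda$ is fully ramified this is obtained by a change of variables over $L_\lambda$ that exploits $\lambda(\fk l_\lambda\cdot\fkn)=\lambda(\fkn\cdot\fk l_\lambda)=0$, yielding the identity $\langle\theta_\lambda^\cF,\eta\rangle_G=\langle\theta_\lambda^\cF,\eta\rangle_G\cdot\langle\chi_\lambda,\eta\rangle_G$. This lemma replaces Otto's Lemma~2.4, after which Otto's original restriction-based descent (noting $\Res_H^G(\psi)^\cF=\Res_H^G(\psi^\cF)$ for algebra subgroups $H$) runs unchanged to give the supercharacter case. Without this lemma the ``polynomial extension of Otto's theorem'' you name as base case remains unproved.
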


All this is useful for  decomposing supercharacters into smaller constituents, and for investigating the properties of individual irreducible characters of algebra groups.  For example, it is known that for $n\geq 13$, $\UT_n(2)$ has irreducible characters with non-real values (see, for example, \cite{IK1,IK2}).  Existing proofs of this fact have all been non-constructive and have relied in some way on computer calculations.  By using the results in this work, however, one can derive this property by hand.  
We carry out the details of this application
in the supplementary work \cite{supp1}, where we explicitly construct irreducible characters of $\UT_n(q)$ with values in arbitrarily large cyclotomic fields.  A second application of the results herein appears in the paper \cite{supp2}, in which we consider the problem of counting the irreducible characters of $\UT_n(q)$ with a fixed degree and derive polynomial expressions for the number of irreducible characters $\UT_n(q)$ of degree $\leq q^8$.

%In our final section we apply these results$-$in particular, the bijection (\ref{1})$-$to investigate the problem of counting the irreducible characters of the unitriangular group.  The supercharacters of $\UT_n(q)$ naturally correspond to $\FF_q^\times$-labeled set partitions, and each supercharacter therefore has a shape given by a set partition of $[n] = \{1,2,\dots,n\}$.  It happens that all supercharacters of $\UT_n(q)$ with the same shape $\Lambda$ have the same number of constituents of a given degree; we denote the number of constituents of degree $q^e$ by $N_{\Lambda,e}(q)$. % and let $N_\Lambda(q) = \sum_{e\geq 0} N_{\Lambda,e}(q)$ denote the number of all constituents.
%  A rather old conjecture due to Lehrer asserts that the number of irreducible characters of $\UT_n(q)$ of a given degree is a polynomial function in $q$; we propose and investigate  the stronger conjecture that the numbers $N_{\Lambda,e}(q)$ are polynomials in $q$ for each fixed set partition $\Lambda$.  We go on to derive a formula giving $(q-1)\cdot N_{\Lambda,e}(q)$ as a difference of character counts for two algebra groups defined by simple combinatorial properties of the set partition $\Lambda$.
%We also prove that $N_{\Lambda,e}(q)$ factors according to the connected components of $\Lambda$; this significantly decreases the complexity of computing $N_{\Lambda,e}(q)$, and allows us to verify our conjecture for $n\leq 6$ by inspection and for $n\leq 13$ by a computer calculation utilizing the algorithm in \cite{E}.

\section{Preliminaries}\label{prelim-sect}

Here we briefly  establish our notational conventions, then define algebra groups, Kirillov functions, and supercharacters, and identify their noteworthy properties.

\subsection{Conventions and notation}

Throughout, all groups $G$ are finite and all algebras $\fkn$ are finite-dimensional and associative.  By a representation of $G$, we shall mean a homomorphism from $G$ into the automorphism group  of a finite-dimensional vector space over a subfield of the complex numbers $\CC$.  A character is any function $G \to \CC$ afforded by the trace of a representation. % A linear character is a character with $\chi(1)=1$; i.e., a homomorphism $G \to \CC^\times$.  
We let $\langle\cdot,\cdot\rangle_G$ denote the standard inner product on the complex vector space of functions $G \to \CC$ defined by 
$ \langle f,g\rangle_G = \frac{1}{|G|} \sum_{x \in G} f(x) \overline{g(x)}$, %,\qquad\text{for functions }f,g:G\to \CC,\] 
and write $\Irr(G)$ to denote the set of  irreducible characters of $G$, or equivalently the set of characters $\chi$ of $G$ with $\langle \chi,\chi \rangle_G = 1$.  A function $ G\to \CC$ is then a character if and only if it is a nonzero sum of irreducible characters with nonnegative integer coefficients.

  If $\psi$ and $\chi$ are characters of $G$, then we say that $\psi$ is a constituent of $\chi$ if $\chi-\psi$ is a character of $G$ or zero.  %, or equivalently if $\langle \psi,\alpha\rangle_G \leq \langle \chi,\alpha\rangle_G$ for all $\alpha \in \Irr(G)$.  
  In this case, the multiplicity of $\psi$ in $\chi$ is defined as the largest integer $m$ such that $\chi-m\psi$ is a character or zero. %For example, if $\rho_G$ is the character of the regular representation of $G$, so that $\rho_G(1) = |G|$ and $\rho_G(g) = 0$ for $g\neq 1$, then each $\psi \in \Irr(G)$ appears in $\rho_G$ with multiplicity $\psi(1)$.  
For any character $\chi$ of $G$, we let 
 \[\Irr(G,\chi) = \{ \psi \in \Irr(G) : \langle \chi, \psi \rangle_G > 0\}\]  denote the set of irreducible constituents of $\chi$.

 If $f: S \to T$ is a map and $S'\subset S$, then we write $f\downarrow S'$ to denote the restricted map $S'\to T$.  For functions on groups, we may also write $\Res_H^G(\chi) = \chi\downarrow H$ to denote the restriction of $\chi : G\to \CC$ to a subgroup $H$.  
If $\chi$ is any complex valued function whose domain includes the subgroup $H\subset G$, then we define the induced function $\Ind_{H}^G(\chi):G \to \CC$ by the formula
 \be\label{frob} \Ind_H^G(\chi)(g) = \frac{1}{|H|} \sum_{\substack { x \in G  \\ xgx^{-1} \in H}} \chi(xgx^{-1}),\qquad\text{for }g\in G.\ee  We recall that restriction takes characters of $G$ to characters of $H$ and induction takes characters of $H$ to characters of $G$.%, and that these operations are adjoint with respect to the scalar product 
 %if $\chi$ restricts to a character of $H$ then $\Ind_H^G(\chi)$ is a character of $G$. % Furthermore, we recall the identity known as Frobenius reciprocity which states that \[\langle \Ind_H^G(\chi), \psi \rangle_G = \langle \chi, \Res_H^G(\psi)\rangle_H\] if $\chi$ is a character of $H$ and $\psi$ is a character of $G$.

%Throughout, $q>1$ is some fixed prime power and $\FF_q$ is a finite field with $q$ elements.  We write $\FF_q^+$ to denote the additive group of the field and $\FF_q^\times$ to denote the multiplicative group of nonzero elements.  For any positive integer $r$ we let $\zeta_r$ denote the primitive $r$th root of unity $\zeta_r = e^{2\pi i / r}$, and write $\QQ(\zeta_r)$ to denote the $r$th cyclotomic field, given by adjoining $\zeta_r$ to $\QQ$. 
%%
%If $V$ is a vector space over a field $\FF$, then we write $V^*$ to denote its dual space, the vector space of $\FF$-linear maps $V\to \FF$.
% For integers $m,n$ we let 
%\[ [m,n] = \{ t \in \ZZ : m\leq t \leq n\}\qquad\text{and}\qquad [n] = [1,n] = \{ 1,2,\dots,n\}.\]
Throughout we write $\fkt_n(q)$ and $\UT_n(q)$ to refer to the algebra and algebra group 
\[ \ba \fkt_n(q) & = \text{the set of upper triangular $n\times n$ matrices over $\FF_q$ with zeros on the diagonal,} \\
\UT_n(q)&=\text{the set of upper triangular $n\times n$ matrices over $\FF_q$ with ones on the diagonal.}\ea\]  Given integers $1\leq i<j \leq n$ we let 
\[ \ba e_{ij} & = \text{the matrix in $\fkt_n(q)$ with 1 in position $(i,j)$ and zeros elsewhere,}\\
e_{ij}^* &=\text{the $\FF_q$-linear map $\fkt_n(q)\to \FF_q$ given by $e_{ij}^*(X) = X_{ij}$.}\ea\]
These matrices and maps are then dual bases of $\fkt_n(q)$ and its dual space $\fkt_n(q)^*$.

%write $e_{ij}$ to denote the elementary matrix in $\fkt_n(q)$ with 1 in position $(i,j)$ and zeros elsewhere.  Likewise, we write $e_{ij}^*$ to denote the map $\fkt_n(q) \to \FF_q$ defined by $e_{ij}^*(X) = X_{ij}$ for $X \in \fkt_n(q)$.  The matrices $e_{ij}$ and the $\FF_q$-linear maps $e_{ij}^*$ then form dual bases of $\fkt_n(q)$ and its dual space $\fkt_n(q)^*$.

\subsection{Algebra groups and their actions}

Let $\fkn$ be a (finite-dimensional, associative) nilpotent $\FF_q$-algebra, and $\fkn^*$ its dual space of $\FF_q$-linear maps $\fkn \to \FF_q$.  Write $G = 1+\fkn$ to denote the corresponding \emph{algebra group}; this is the set of formal sums $1+X $ with $X \in \fkn$, made a group via the multiplication \[(1+X)(1+Y) = 1+X+Y+XY.\]  As ubiquitous examples, we take $\fkn$ to be the algebra $\fkt_n(q)$ of strictly upper triangular $n\times n$ matrices over $\FF_q$ and $G$ to be the \emph{unitriangular group} $\UT_n(q) = 1 + \fkt_n(q)$.  

We call a subgroup of $G=1+\fkn$ of the form $H = 1+\h$ where $\h \subset \fkn$ is a subalgebra an \emph{algebra subgroup}.  
If $\h \subset \fkn$ is a two-sided ideal then $H$ is a normal algebra subgroup of $G$, and the map $gH \mapsto 1+(X+\h)$ for  $g=1+X \in G$ gives an isomorphism $G/H \cong 1 + \fkn /\h$.  In practice we shall usually identify the quotient $G/H$ with the algebra group $1 + \fkn /\h$ by way of this canonical map.

%Algebra groups form a relatively well-behaved class of $p$-groups.  They have the nice property  that every irreducible representation has $q$-power degree \cite{I95} and is induced from a linear representation of an algebra subgroup \cite{H}.  Not every $p$-group is an algebra group.  As mentioned in Section 2.2 of \cite{DT}, the maximal unipotent subgroup of $\mathrm{Sp}(2n,\FF_q)$ is a $p$-group but not an algebra group; in fact, this group can have irreducible representations whose degrees are not $q$-powers, as discussed in \cite[Section 11]{I95}.  However, every $p$-group is a subgroup of an algebra group.

\def\osdef{\overset{\mathrm{def}}}

We have several actions of $G$ on its dual space $\fkn^*$.  The %\emph{adjoint} action of $G$ on $\fkn$ and the
 \emph{coadjoint} action of $G$ on $\fkn^*$ is the right action given by %$(X,g) \mapsto X^{g}$ and 
 $(\lambda,g) \mapsto \lambda^{g}$ where we define
\[ %X^g = g^{-1} X g \qquad\text{and}\qquad 
\lambda^g(X) = \lambda(g X g^{-1}),\qquad\text{for } \lambda \in \fkn^*,\ g\in G,\ X \in \fkn.\]  We denote the %adjoint orbit of $X \in \fkn$ and the
 coadjoint orbit of $\lambda \in \fkn^*$ by %$X^G$ and
 $\lambda ^G$.  
The size of this orbit is $|\lambda^G|=q^e$ where $e$ is an even integer, equal to the codimension in $\fkn$ of the % we have $|\lambda^G| = \frac{|\fkn|}{|T_\lambda|}$ where 
%\[T_\lambda = \{ X \in \fkn : \lambda([X,\fkn]) = 0\} = \{ Y \in \fkn : \lambda([\fkn,Y]) = 0\}\] is the
 radical of the alternating $\FF_q$-bilinear form $\fkn \times \fkn \to \FF_q$ given by $(X,Y) \mapsto  \lambda(XY-YX)$ \cite[Lemma 4.4]{DI}.

We also have left and right actions of $G$ on $\fkn^*$.
The group $G$ acts on the left and right on $\fkn$  by multiplication, and on 
 $\fkn^*$ by $(g , \lambda) \mapsto g\lambda$ and $( \lambda,g) \mapsto \lambda g$ where we define
 \[ g\lambda(X) = \lambda(g^{-1}X)\qquad\text{and}\qquad \lambda g(X) = \lambda(Xg^{-1}),\qquad\text{for }\lambda \in \fkn^*,\ g \in G,\ X \in \fkn.\]
 These actions commute, in the sense that %$(gX)h = g(Xh)$ and 
 $(g\lambda) h = g(\lambda h)$ for $g,h \in G$, so there is no ambiguity in removing all parentheses and writing expressions like %$gX h$ and 
 $g\lambda h$.   We denote the left, right, and two-sided orbits of %$X \in \fkn$ and 
 $\lambda \in \fkn^*$ by %$GX$, $XG$, $GXG$, and 
 $G\lambda$, $\lambda G$, $G\lambda G$.   We have 
 $ |G\lambda | = |\lambda G|$ %\qquad\text{and}\qquad
 and  $|G\lambda G| = \frac{|G\lambda| |\lambda G|}{|G\lambda \cap \lambda G|}$ for $\lambda \in \fkn^*$, %\qquad\text{for $\lambda \in \fkn^*$,}\] 
 and these are all nonnegative integer powers of $q$ by \cite[Lemmas 3.1 and 4.2]{DI}.

\subsection{Functions on algebra groups}\label{alg}
 
 For the duration of this work, $\theta : \FF_q^+\to \CC^\times$ denotes a fixed, nontrivial homomorphism from the additive group of $\FF_q$ to the multiplicative group of nonzero complex numbers.  Observe that $\theta$ takes values in the cyclotomic field $\QQ(\zeta_p)$, where $p>0$ is the characteristic of $\FF_q$ and $\zeta_p = e^{2\pi i / p}$ is a primitive $p$th root of unity.

 The dual space $\fkn^*$ naturally indexes several different kinds of complex-valued functions on $G$.  To begin,
for each $\lambda \in \fkn^*$, we define $\theta_\lambda : G \to \QQ(\zeta_p)$ as the function with
\[ \theta_\lambda(g) = \theta\circ \lambda(g-1),\qquad\text{for }g \in G.\]  
The maps $\theta\circ \lambda : \fkn \to \CC$ are the distinct irreducible characters of the abelian group $\fkn$, and from this it follows that the functions $\theta_\lambda : G\to \CC$ are an orthonormal basis (with respect to $\langle\cdot,\cdot\rangle_G$) for all functions on the group.   

Next, for $\lambda \in \fkn^*$ the \emph{Kirillov function} $\psi_\lambda$ and the \emph{supercharacter} $\chi_\lambda$ are the functions  $G \to \QQ(\zeta_p)$ defined by 
\be\label{superchar-def} \psi_\lambda = \frac{1}{\sqrt{|\lambda^G|}} \sum_{\mu \in \lambda^G} \theta_\mu\qquad\text{and}\qquad 
 \chi_\lambda = \frac{|G\lambda|}{|G\lambda G|} \sum_{\mu \in G \lambda G} \theta_\mu.
\ee 
We note from the remarks in the previous section that the degrees $\psi_\lambda(1) = \sqrt{|\lambda^G|}$ and $\chi_\lambda(1) = |G\lambda| = |\lambda G|$  of these functions are nonnegative integer powers of $q$. 
The choice of $\theta$ in these definitions is not important, as a different choice yields the same set of functions but with the indices $\lambda$ permuted.  Kirillov functions are sometimes but not always characters, while supercharacters are always characters but often reducible.  The following remarks detail some more important properties of these functions.

\begin{remarks}
\begin{enumerate}
\item[]
%\item[(i)] The numbers of adjoint orbits in $\fkn$ and coadjoint orbits in $\fkn^*$ are the  same; they equal the number of conjugacy classes or irreducible characters in $G$. Likewise, the number of left/right/two-sided $G$-orbits in $\fkn$ is equal to the number of left/right/two-sided $G$-orbits in $\fkn^*$, respectively \cite[Lemma 4.1]{DI}.

%\item[(i)] For each $\lambda \in \fkn^*$ we have $\left|\lambda^G\right|=q^e$ where $e$ is an even integer, equal to the codimension in $\fkn$ of the % we have $|\lambda^G| = \frac{|\fkn|}{|T_\lambda|}$ where 
%%\[T_\lambda = \{ X \in \fkn : \lambda([X,\fkn]) = 0\} = \{ Y \in \fkn : \lambda([\fkn,Y]) = 0\}\] is the
% radical of the alternating $\FF_q$-bilinear form $\fkn \times \fkn \to \FF_q$ given by $(X,Y) \mapsto  \lambda(XY-YX)$ \cite[Lemma 4.4]{DI}.  Also,  we have 
% $ |G\lambda | = |\lambda G|$ %\qquad\text{and}\qquad
% and  $|G\lambda G| = \frac{|G\lambda| |\lambda G|}{|G\lambda \cap \lambda G|}$ for $\lambda \in \fkn^*$, %\qquad\text{for $\lambda \in \fkn^*$,}\] 
% and these are all nonnegative integer powers of $q$ by \cite[Lemmas 3.1 and 4.2]{DI}.  Therefore, the degrees $\psi_\lambda(1) = \sqrt{|\lambda^G|}$ and $\chi_\lambda(1) = |G\lambda| = |\lambda G|$ of Kirillov functions and supercharacters are nonnegative integer powers of $q$. 

\item[(i)]  We have $\psi_\lambda = \psi_\mu$ if and only if $\lambda^G = \mu^G$ and $\chi_\lambda = \chi_\mu$ if and only if $G\lambda G = G\mu G$.  In fact, the number of distinct Kirillov functions is equal to the number of conjugacy classes in $G$ and the number of distinct supercharacters is equal to the number of two-sided $G$-orbits in $\fkn$ \cite[Lemma 4.1]{DI}.
The orthogonality of the functions $\theta_\mu$ implies that
\[\langle \psi_\lambda,\psi_\mu\rangle_G = \left\{\barr{ll} 1, &\text{if }\lambda^G = \mu^G, \\
0,&\text{otherwise,}\earr\right.\quad\text{and}\quad 
\langle \chi_\lambda,\chi_\mu\rangle_G = \left\{\barr{ll} |G\lambda \cap \lambda G|, &\text{if }G \lambda G = G \mu G, \\
0,&\text{otherwise.}\earr\right.\]  Kirillov functions are constant on conjugacy classes, and thus form an orthonormal basis for the class functions on $G$.
Also, $\chi_\lambda$ is a linear character if and only if $\chi_\lambda =\psi_\lambda= \theta_\lambda$ and $\langle \chi_\lambda, \chi_\lambda \rangle_G=1$ if and only if $\chi_\lambda = \psi_\lambda$ \cite[Theorem 5.10]{DI}.

\item[(ii)] The column orthogonality relations satisfied by the characters of the abelian group $\fkn$ imply that $\sum_{\nu \in \fkn^*} \theta\circ \nu(X)=0$ if $X \in \fkn \setminus \{0\}$.  This observation shows that the character $\rho_G$ of the regular representation of $G$ decomposes as 
\be\label{decomp} 
\rho_G = \sum_{\nu \in \fkn^*} \theta_\nu = \sum_{\mu} \psi_\mu(1) \psi_\mu
=
 \sum_{\lambda} \frac{|G\lambda G|}{|G\lambda|} \chi_\lambda
 \ee 
 where the second sum is over a set of representatives $\mu$ of the coadjoint orbits in $\fkn^*$ and the third sum is over a set of representatives $\lambda$ of the two-sided $G$-orbits in $\fkn^*$.    Thus every irreducible character of $G$ appears as a constituent of a unique supercharacter.

\item[(iii)] If $\h \subset \fkn$ is a subalgebra and $H = 1+\h \subset G$ is an algebra subgroup,  then $\Res_H^G(\psi_\lambda)$ (respectively, $\Res_H^G(\chi_\lambda)$) is a nonnegative integer linear combination of Kirillov functions  (respectively, supercharacters), by \cite[Theorem 5.5]{AndreAdjoint} and %$\Res_H^G(\chi_\lambda)$ is a nonnegative integer linear combination of supercharacters
 \cite[Theorem 6.4]{DI}.  Since Kirillov functions are orthonormal, it follows that if $\mu \in \h^*$ then $\Ind_H^G(\psi_\mu)$ is a nonnegative integer linear combination of Kirillov functions.  Induction may fail to take sums of supercharacters to sums of supercharacters, and so to make an analogous statement one must use a different notion of induction to go from functions on $H$ to functions on $G$; see \cite{MT} for a more detailed discussion.

\item[(iv)] If $\lambda \in \fkn^*$ and $\vartheta_1,\dots,\vartheta_m \in \Irr(G)$ are the distinct irreducible constituents of $\chi_\lambda$, then there are complex numbers $c_i$ such that $\psi_\lambda = \sum_{i=1}^m c_i \vartheta_i$ \cite[Theorem 2.5]{O}.  Since Kirillov functions and irreducible characters are both orthonormal bases of the class functions on $G$, it follows that the number of irreducible constituents of $\chi_\lambda$ is the number of coadjoint orbits in $G\lambda G$.  
 \end{enumerate}
\end{remarks}

\subsection{Supercharacters}\label{superchars}

Andr\'e \cite{Andre1} first defined what we call supercharacters for the algebra group $\UT_n(q)$; in this special case, they provide as an accessible substitute for the group's unknown and presumably unclassifiable irreducible representations.  Several years later, Yan \cite{Yan} showed how one could replace Andr\'e's definition with a more elementary construction, which Diaconis and Isaacs \cite{DI} subsequently generalized to algebra groups. 

 There are several ways to see that the functions $\chi_\lambda$ are characters. Here is the interpretation of greatest relevance to this work: 
if $L_\lambda = \{ g \in G : g\lambda = \lambda\}$ denotes the left stabilizer of $\lambda$ in $G$, then $L_\lambda$ is an algebra subgroup of $G$, $\theta_\lambda$ restricts to a linear character of $L_\lambda$, and 
\[\chi_\lambda = \Ind_{L_\lambda}^G(\theta_\lambda)\] by \cite[Theorem 5.4]{DI}.  It is an instructive exercise to derive (\ref{superchar-def}) from this definition, via the Frobenius formula for induced characters.
%  Alternatively, if we define $v_\lambda= \sum_{g \in G} \theta\circ \lambda(1-g) g \in \CC G$ for $\lambda \in \fkn^*$ % by
%%\[v_\lambda =  \sum_{g \in G} \theta\circ \lambda(1-g) g\]% \qquad\text{and}\qquad e_\lambda = \frac{|G\lambda \cap \lambda G|}{|G|} \sum_{\mu \in G\lambda G} v_\lambda,
%%\qquad\text{for }\lambda \in \fkn^*
%then
%\[ gv_\lambda =\theta\circ\lambda\(1-g^{-1}\) v_{g\lambda}
%\qquad
%\text{and}\qquad 
%v_\lambda g =\theta\circ\lambda\(1-g^{-1}\) v_{\lambda g},\qquad\text{for }g \in G
%\] and it follows that $\{v_\mu : \mu \in G\lambda G\}$ is a basis for the two-sided ideal $\CC G v_\lambda \CC G$ and $\chi_\lambda$ is the character of the left $G$-module $\CC G v_\lambda = \CC\spanning\{ v_\mu : \mu \in G\lambda\}$.  

Supercharacters are constant on \emph{superclasses}, by which we mean sets of the form $\{ 1+ gXh : g,h\in G\}$ for $X \in \fkn$.  The singleton set $\{1\}$ is a superclass, and in general the superclasses are unions of conjugacy classes which partition $G$ and are equal in number to the supercharacters of $G$.   Remark (i) in Section \ref{alg} implies that the  distinct supercharacters $\chi_\lambda$ form an orthogonal basis for the \emph{superclass functions} on $G$, and so the characters $\{\chi_\lambda \in \fkn^*\}$ form a \emph{supercharacter theory} of $G$ in the sense of Diaconis and Isaacs in \cite{DI}.   For $G= \UT_n(q)$, this supercharacter theory has many interesting combinatorial properties.  For example, the superclasses and supercharacters are in bijection with set partitions whose arcs are labeled by elements of $\FF_q^\times$, %(see Section \ref{pattern} below)
 and the ring of superclass functions has a natural Hopf algebra structure isomorphic to a colored version of the Hopf algebra $NCSym$ of symmetric functions in non-commuting variables; see \cite{alia}.

%Thus every supercharacter is a character.  These properties mirror those of the irreducible characters of $G$: Isaacs \cite{I95} proved that every irreducible character of $G$ has $q$-power degree and Halasi \cite{H} has shown that every irreducible character $G$ is induced from a linear character of an algebra subgroup.
Following \cite{AndreAdjoint}, we say that the map $\lambda \in \fkn^*$ and the supercharacter $\chi_\lambda$ are \emph{fully ramified} if $G\lambda= \lambda G$.  This is equivalent to the condition that $\chi_\lambda(1) = \langle\chi_\lambda,\chi_\lambda\rangle_G$  or that $\chi_\lambda$ be the character of a two-sided ideal in the group algebra $\CC G$, and in this case $\chi_\lambda =  \sum_{\nu \in G\lambda G} \theta_\nu$.  
 In this situation $L_\lambda \vartriangleleft G$,  $\chi_\lambda$ vanishes on $G-L_\lambda$, and $\theta_\lambda$ restricts to a $G$-invariant character of $L_\lambda$; in fact, by \cite[Theorem 4.5]{AndreAdjoint} these properties are equivalent to the definition. Comparing this observation with the standard use of  ``fully ramified'' for irreducible characters (cf.\ Problem (6.3) in \cite{Isaacs}) explains our appropriation of the term.
 When $G=\UT_n(q)$, there are simple formulas due to Yan \cite{Yan} computing $\chi_\lambda(1)$ and $\langle\chi_\lambda,\chi_\lambda\rangle_G$ for all supercharacters, and these  show that the unitriangular group's fully ramified supercharacters  are precisely its linear characters.
This nice description certainly does not hold in general, and 
we shall see that many problems regarding the decomposition of supercharacters into irreducibles \emph{a priori} reduces to the fully ramified case.

\subsection{Kirillov functions}\label{kir}

%As mentioned above, Kirillov functions are sometimes but not always characters.  
Let us now turn to Kirillov functions. The set  $\{ \psi_\lambda:\lambda \in \fkn^*\}$ is an orthonormal basis for the class functions of $G$, and so if every Kirillov function is a character then the set of these functions is precisely $\Irr(G)$.  Kirillov \cite{K} conjectured that this is the case when $G = \UT_n(q)$.   Isaacs and Karagueuzian disproved this conjecture using two computational approaches: first, by writing down a matrix in $\UT_{13}(2)$ not conjugate to its inverse \cite{IK1,IK2}, and second, by elegantly combining the Frobenius-Schur involution counting formula with an algorithm to compute the character degrees of $\UT_n(2)$ \cite{IK05}.  
Both results (non-constructively) prove the existence of a non-real irreducible character of $\UT_n(2)$ for $n>12$. 

We will discuss in Section \ref{inductive} how a recent  computation due to Evseev \cite{E} implies that Kirillov's conjecture holds for $\UT_n(q)$ if and only if $n\leq 12$, regardless of the choice of $q$. 
% Later work by  Vera-L\'opez and Arregi \cite{VeraLopez2004} on conjugacy properties of $\UT_n(q)$ implies the conjecture fails more generally if $q$ is prime and $n>6q$.  %The authors of these works actually investigate the dual question of whether every $\UT_n(q)$, and so their results concerning the existence of Kirillov functions which are not characters are highly non-constructive.  
%Strengthening these results, we will discuss in Section \ref{inductive} how a recent  computation due to Evseev \cite{E} implies that Kirillov's conjecture holds if and only if $n\leq 12$, regardless of the choice of $q$.  More constructively, in Section \ref{constructions} we will actually write down a Kirillov function of $\UT_n(q)$ which is not a character for all prime powers $q$ if $n>12$.  
%
%
Determining when Kirillov's conjecture holds for other algebra groups and more generally when a given Kirillov function $\psi_\lambda$ is a character remains an open problem.  %As remarked above, $\psi_\lambda$ is a character if $\psi_\lambda(1)=1$; equivalently, if $G\lambda G = \lambda^G = \{\lambda\}$.  Generalizing this,Andr\'e and Nicol\'as prove that $\psi_\lambda$ is a character if the supercharacter $\chi_\lambda$ has exactly one irreducible constituent, which occurs if and only if $G\lambda G = \lambda^G$ \cite[Theorem 6.1]{AndreAdjoint}. 
 %In Section \ref{inductive} we associate to each $\lambda \in \fkn^*$ a subalgebra $\olfks_\lambda\subset \fkn$ such that if $\mu  \in (\olfks_\lambda)^* $ is the restriction of $\lambda$ to $\olfks_\lambda$, then the supercharacter $\chi_\mu$ is fully ramified, and we show that the Kirillov function $\psi_\mu$ is a character of $1+\olfks_\lambda$ if and only if $\psi_\lambda$ is a character of $G$.  
 Results in Section \ref{3}, however,  in some sense reduce the second problem %of determining when $\psi_\lambda$ is a character
  to the case where $\lambda$ is fully ramified, i.e., when $G\lambda = \lambda G$.

In our definition of a Kirillov function we seek to attach a coadjoint orbit in $\fkn^*$ to an irreducible character of $G$ 
 by way of the bijection $\fkn \to G$ given by $X \mapsto 1+X$.  This is Kirillov's orbit method in the context of finite groups as described in \cite{K}.  In practice, one is more successful in developing a correspondence between coadjoint orbits and irreducible characters if a different bijection $\fkn \to G$ is used.  To formalize this, if $\cF : \fkn \to G$ is any bijection and $\psi : G \to \CC$, we define $\psi^\cF : G\to \CC$ by %the identity
 \be\label{formal} \psi^\cF\( \cF(X)\) = \psi(1+X),\qquad\text{for }X \in \fkn.\ee
 Recalling that $\fkn$ is an algebra over a finite field of characteristic $p>0$,  let $\tobedecided \exp : \fkn \to G$ be the truncated exponential map \[\ba & \barr{l}\tobedecided \exp (X) = 1 + X + \frac{1}{2}X^2 + \frac{1}{6}X^3 + \dots + \frac{1}{(p-1)!}X^{p-1}. % =1 + \sum_{n=1}^{p-1}  X^n/n!
\earr\ea\]  This map is always a bijection (as is any polynomial map with constant term one and nonzero linear term), and so can we may consider the functions $\logpsi_\lambda : G \to \QQ(\zeta_p)$ for $\lambda \in \fkn^*$ as defined by (\ref{formal}); we call these maps \emph{exponential Kirillov functions}.
In characteristic two $\logpsi_\lambda = \psi_\lambda$, % for all $\lambda \in \fkn^*$,
 and in all prime characteristics  the following observation holds:

\begin{observation}\label{obs1} Remarks (i)-(iii) in Section \ref{alg} remain valid if we replace all instances of Kirillov functions $\psi_\lambda$ with $\psi^\cF_\lambda$ where $\cF : \fkn \to G$ is any polynomial map of the form 
$ \cF(X) = 1 +X + \sum_{k\geq 2} a_k X^k$ with $a_k \in \FF_q$. %,\qquad\text{with }a_i \in \FF_q.\]  
  In particular, the exponential Kirillov functions form an orthonormal basis for the class functions of $G$.
\end{observation}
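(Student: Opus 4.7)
The plan is to view the polynomial map $\cF$ as a $G$-equivariant relabeling, which reduces the whole observation to transporting Remarks (i)--(iii) across a bijection.

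First I would record the key equivariance property: since $(gXg^{-1})^k = g X^k g^{-1}$ for all $g \in G$ and $X \in \fkn$, the polynomial form of $\cF$ yields
\[ \cF(gXg^{-1}) = 1 + gXg^{-1} + \sum_{k\geq 2} a_k g X^k g^{-1} = g \cF(X) g^{-1}, \]
so $\cF$ intertwines the adjoint action on $\fkn$ with conjugation on $G$. Combined with $\cF(0)=1$, this gives $\psi^\cF_\lambda(1) = \psi_\lambda(1) = \sqrt{|\lambda^G|}$ and, more importantly, makes $\psi^\cF_\lambda$ a class function whenever $\psi_\lambda$ is: for $h,g \in G$ with $g = \cF(X)$, one has $hgh^{-1} = \cF(hXh^{-1})$, hence $\psi^\cF_\lambda(hgh^{-1}) = \psi_\lambda(1+hXh^{-1}) = \psi_\lambda(1+X) = \psi^\cF_\lambda(g)$.

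Next I would handle Remark~(i) by a change-of-variables computation. Since $\cF$ is a bijection $\fkn \to G$,
\[ \langle \psi^\cF_\lambda, \psi^\cF_\mu \rangle_G = \frac{1}{|G|} \sum_{X \in \fkn} \psi_\lambda(1+X)\overline{\psi_\mu(1+X)} = \langle \psi_\lambda,\psi_\mu\rangle_G, \]
so the inner products and the condition for equality ($\lambda^G = \mu^G$) both transfer immediately, giving an orthonormal class-function basis $\{\psi^\cF_\mu\}$ indexed by coadjoint orbits. Remark~(ii) then follows automatically: since $\langle \rho_G,f\rangle_G = \overline{f(1)}$ for any class function $f$, and each $\psi^\cF_\mu(1) = \sqrt{|\mu^G|}$ is a positive real number, expanding the class function $\rho_G$ in the orthonormal basis $\{\psi^\cF_\mu\}$ yields $\rho_G = \sum_\mu \psi^\cF_\mu(1)\psi^\cF_\mu$. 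The outer equalities in (ii) involve only $\theta_\nu$ and $\chi_\lambda$ and are unchanged.

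For Remark~(iii), the main point is that if $\h \subset \fkn$ is a subalgebra then $\cF$ restricts to a bijection $\h \to H = 1+\h$ of the same polynomial form, because $\h$ is closed under products. Writing $h = \cF(Y)$ for $Y \in \h$ and using the expansion $\Res_H^G(\psi_\lambda) = \sum_\mu c_\mu \psi_\mu$ from the original Remark~(iii) gives
\[ \Res_H^G(\psi^\cF_\lambda)(h) = \psi_\lambda(1+Y) = \sum_\mu c_\mu \psi_\mu(1+Y) = \sum_\mu c_\mu \psi^\cF_\mu(h), \]
so the same nonnegative integer coefficients appear. The induction statement then follows at once from Frobenius reciprocity and the orthonormality established above: $\langle \Ind_H^G(\psi^\cF_\mu),\psi^\cF_\nu\rangle_G = \langle \psi^\cF_\mu, \Res_H^G(\psi^\cF_\nu)\rangle_H$ is a nonnegative integer.

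The whole argument is really a matter of bookkeeping; the only step that requires a small check (and the one I would be most careful about) is the equivariance identity $\cF(gXg^{-1}) = g\cF(X)g^{-1}$, which uses precisely that $\cF$ is a polynomial with constant term $1$ and that conjugation is a ring automorphism of $\fkn$. Once that is in hand, everything is transport of structure along the bijection $\cF$.
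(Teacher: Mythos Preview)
Your argument is essentially the same transport-of-structure approach as the paper's, and the parts you do address are correct. However, you skip one clause that the paper singles out as requiring its own argument: the final sentence of Remark~(i), namely that $\langle \chi_\lambda,\chi_\lambda\rangle_G = 1$ if and only if $\chi_\lambda = \psi^\cF_\lambda$ (and the linear-character statement $\chi_\lambda = \psi^\cF_\lambda = \theta_\lambda$). Your change-of-variables computation shows $\langle \psi^\cF_\lambda,\psi^\cF_\mu\rangle_G = \langle \psi_\lambda,\psi_\mu\rangle_G$, but it does \emph{not} show that $\psi^\cF_\lambda = \psi_\lambda$ when $\chi_\lambda$ happens to be irreducible, which is what is needed here. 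The missing observation is that $\cF(X)$ and $1+X$ always lie in the same superclass: indeed $\cF(X) - 1 = X(1 + a_2 X + a_3 X^2 + \cdots) \in X\cdot G$, so any superclass function $\chi_\lambda$ satisfies $\chi_\lambda^\cF = \chi_\lambda$. Once you have this, the original equivalence $\langle\chi_\lambda,\chi_\lambda\rangle_G = 1 \Leftrightarrow \chi_\lambda = \psi_\lambda$ immediately upgrades to $\chi_\lambda = \chi_\lambda^\cF = \psi_\lambda^\cF$.

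A second, more minor omission: you assert that $\cF$ is a bijection without justification. The paper includes a short argument (solve recursively for the coefficients of a polynomial inverse $\cG$), and this is also what guarantees that $\cF$ restricts to a bijection $\h\to 1+\h$ for every subalgebra $\h$, a fact you use in your treatment of Remark~(iii).
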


Note well that we do not yet claim that Remark (iv) still holds.  It does, and this will be a corollary of Theorem \ref{otto-gen} below, whose proof in Section \ref{inductive} shall require a nontrivial effort.

\def\cG{\mathrm{F'}}

\begin{proof}
The map $\cF$ is a bijection, because if $\cG : G \to \fkn$ has the form $\cG(1+X) = \sum_{k=1}^n b_k X^k$ where $b_k \in \FF_q$ and $\fkn^n =0$,  then the system of equations  in the variables $b_k$ determined by writing $\cG \circ \cF (X) = X$ has a solution; in particular, we must have $b_1 = 1$ and each $b_{k+1}$ is determined by $b_1,\dots,b_k$.   Note that $\cF$ thus defines a bijection $\h \to H= 1+\h$ for any subalgebra $\h \subset \fkn$.  
Next observe that $\cF(gXg^{-1}) = g\cdot\cF(X)\cdot g^{-1}$ for $g\in G$ and $X \in \fkn$, so $\psi^\cF_\lambda$ is a class function.  Given these two facts, everything but (iv) and the last statement in (i)  is straightforward.  To see that (i) holds, we note that $\cF(X)$ and $1+X$ lie in the same superclass %, since $\exp(X) = 1 + Xg$ for $g = 1 + \frac{1}{2}X + \frac{1}{6} X^2 + \dots + \frac{1}{(p-1)!} X^{p-1} \in G$.  Remark
so $\chi_\lambda = \chi_\lambda^{\cF}$, and thus $\langle \chi_\lambda,\chi_\lambda\rangle_G=1$ if and only if $\chi_\lambda = \psi_\lambda$ if and only if $\chi_\lambda = \chi_\lambda^{\cF}= \psi^\cF_\lambda$.  
\end{proof}

This notation is relevant primarily because of the following theorem, first observed by Kazhdan \cite[Propositions 1 and 2]{Kaz}  for unitriangular matrix groups and proved in greater generality by   Sangroniz \cite[Corollary 3]{Sangroniz}.

\begin{theorem}[Sangroniz \cite{Sangroniz}] \label{Corollary 3}
Let $\fkn$ be a nilpotent $\FF_q$-algebra write $G=1+\fkn$. If $p>0$ is the characteristic of $\FF_q$ and $\fkn^p=0$, then %the Campbell-Hausdorff formula holds for $\exp$ and %and we write $\exp$ in place of $\tobedecided  \exp$.  This map is always a bijection (as is any polynomial map with constant term one and nonzero linear term), so it has an inverse $G \to \fkn$ which we denote by $\tobedecided  \ln : G \to \fkn$.  When $\fkn^p=0$ we write $ \ln$ in place of $\tobedecided  \ln$, and in this case\[\ba&\barr{l} \ln(1+X) = X - \frac{1}{2}X^2 + \frac{1}{3}X^3 - \dots - \frac{(-1)^{p-1}}{p-1}X^{p-1}. \earr \ea\]We now define 
   %if $\fkn^p = 0$ %, where $p>0$ is the characteristic of $\FF_q$, 
%every exponential Kirillov function is an irreducible character of $G$; i.e., 
$\Irr(G) =\left \{\logpsi_\lambda  : \lambda \in \fkn^*\right\}$.  
\end{theorem}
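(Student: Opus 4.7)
The plan is to invoke Kirillov's orbit method in its classical form, adapted to the hypothesis $\fkn^p=0$. First, I would observe that under this hypothesis the truncated exponential $\exp: \fkn\to G$ has a polynomial inverse $\log: G\to \fkn$ obtained by truncating $\log(1+X)=X-\tfrac12 X^2+\tfrac13 X^3-\cdots$, and the Baker--Campbell--Hausdorff identity $\exp(X)\exp(Y)=\exp\bigl(X+Y+\tfrac12[X,Y]+\cdots\bigr)$ terminates at degree $p-1$, since every deeper term involves a product of at least $p$ factors from $\fkn$ and all denominators appearing before degree $p$ are units of $\FF_q$. Consequently $\exp$ promotes $\fkn$, equipped with the BCH product, to a group isomorphic to $G$, and conjugation in $G$ corresponds under $\exp$ to the adjoint action $\Ad(\exp Y)(X)=e^{\mathrm{ad}\,Y}(X)$ of the nilpotent Lie algebra $(\fkn,[\cdot,\cdot])$, where $[X,Y]=XY-YX$.

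Second, Observation~\ref{obs1} already tells us that $\{\logpsi_\lambda:\lambda\in\fkn^*\}$ is an orthonormal basis for the class functions on $G$, with $\logpsi_\lambda=\logpsi_\mu$ iff $\lambda,\mu$ lie in the same coadjoint orbit. Since $\logpsi_\lambda(1)=\sqrt{|\lambda^G|}>0$, it suffices to prove that each $\logpsi_\lambda$ is a character: orthonormality will then force irreducibility, and because the number of coadjoint orbits matches the number of conjugacy classes of $G$, which equals $|\Irr(G)|$, the functions $\logpsi_\lambda$ will exhaust $\Irr(G)$.

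To show each $\logpsi_\lambda$ is a character I would construct it as an induced character. Fix $\lambda$ and consider the alternating form $B(X,Y)=\lambda([X,Y])$ on $\fkn$. A flag construction through the radical $\mathrm{rad}(B)$ (Kirillov's lemma, valid over any field for nilpotent Lie algebras) produces a \emph{polarization}: a Lie subalgebra $\fkm\subseteq\fkn$ maximal isotropic for $B$, of dimension $\tfrac12(\dim\fkn+\dim\mathrm{rad}(B))$. Set $M=\exp(\fkm)$; since $\fkm$ is closed under the BCH product, $M$ is a subgroup of $G$. Because $B$ vanishes on $\fkm$, the BCH expansion on $\fkm$ lies in $\fkm+[\fkm,\fkm]$ with $\lambda$ killing the bracket part, so the recipe $\exp(X)\mapsto \theta(\lambda(X))$ defines a linear character $\tau$ of $M$, which is just $\theta_\lambda^{\exp}\downarrow M$.

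The remaining, and main, task is to identify $\Ind_M^G(\tau)$ with $\logpsi_\lambda$. The degrees agree, since $[G:M]=q^{\dim\fkn-\dim\fkm}=\sqrt{|\lambda^G|}=\logpsi_\lambda(1)$, so the content lies in the character values. Transporting the Frobenius sum (\ref{frob}) through $\exp$ and using the adjoint description of conjugation from the first paragraph, one rewrites $\Ind_M^G(\tau)(\exp X)$ as a sum over cosets that, via the orbit--stabilizer principle for the coadjoint action of $G$ on $\fkn^*$, telescopes into $|\lambda^G|^{-1/2}\sum_{\mu\in\lambda^G}\theta(\mu(X))$. The delicate point is the vanishing of the cross-terms, which relies precisely on the maximal isotropy of $\fkm$ for $B$ and amounts to a Gaussian-sum computation on the symplectic quotient $\fkn/\mathrm{rad}(B)$; I expect this character-sum identification to be the main obstacle, as it is the technical heart of the orbit method and requires careful bookkeeping when translating between the group $G$ and the Lie algebra $(\fkn,\ast_{\mathrm{BCH}})$. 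Once it is established, $\logpsi_\lambda$ is exhibited as an induced character of degree $\sqrt{|\lambda^G|}$ and unit norm, hence an irreducible character, and by the count of orbits the theorem follows.
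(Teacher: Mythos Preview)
The paper does not prove this theorem; it is quoted as a result of Sangroniz (following Kazhdan in the unitriangular case), with a bare citation to \cite[Corollary 3]{Sangroniz} and no argument given. There is therefore nothing in the paper to compare your proposal against.

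That said, your proposal is the standard orbit-method proof and matches in outline how the result is established in the cited literature: when $\fkn^p=0$ the truncated BCH series makes sense over $\FF_q$, so $\exp$ transports the Lie-algebra structure to $G$ and conjugation becomes the adjoint action; Vergne's flag construction supplies a polarization $\fkm$ for the alternating form $(X,Y)\mapsto\lambda([X,Y])$; the map $\exp(X)\mapsto\theta(\lambda(X))$ is a linear character of $\exp(\fkm)$ because $\lambda$ kills every iterated bracket in $\fkm$; and the Frobenius formula for $\Ind_{\exp(\fkm)}^G$ of this character unwinds to the orbit sum defining $\logpsi_\lambda$. Your identification of that last Frobenius-sum computation as the delicate step is accurate, and your invocation of Observation~\ref{obs1} to reduce the theorem to showing each $\logpsi_\lambda$ is a character is exactly right. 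No gap is apparent in the outline.
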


If $\fkn = \fkt_n(q)$ then $\fkn^p=0$ if and only if $n<p$.  Sangroniz notably improves this bound by a factor of two:
\begin{corollary}[Sangroniz \cite{Sangroniz}]\label{sang} If %$p>0$ is the characteristic of $\FF_q$ and 
$n<2p$ then $\Irr\(\UT_n(q)\) =\left \{\logpsi_\lambda  : \lambda \in \fkt_n(q)^*\right\}$. 
\end{corollary}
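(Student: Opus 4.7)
The plan is to deduce Corollary \ref{sang} from Theorem \ref{Corollary 3} by a refined orbit-method analysis tailored to $\fkt_n(q)$. The structural observation that makes the factor-of-two improvement possible is this: when $n < 2p$, we have $\fkt_n(q)^{2p} \subset \fkt_n(q)^n = 0$, so even when $\cI := \fkt_n(q)^p$ is nonzero, it is an abelian two-sided ideal satisfying $\cI^2 = 0$. This is precisely the slack over the hypothesis $\fkn^p = 0$ of Theorem \ref{Corollary 3}.

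First, I would appeal to Observation \ref{obs1} applied to the polynomial bijection $\exp$: the family $\{\logpsi_\lambda\}$, indexed by coadjoint orbit representatives in $\fkt_n(q)^*$, is an orthonormal basis of the class functions on $\UT_n(q)$, and its cardinality equals $|\Irr(\UT_n(q))|$. Consequently, it suffices to show that each $\logpsi_\lambda$ is a character; orthonormality then forces each to be an irreducible character, and the claimed set equality follows.

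Second, I would exhibit each $\logpsi_\lambda$ as an induced character via a polarization. For fixed $\lambda \in \fkt_n(q)^*$, the goal is to select a subalgebra $\fk p \subset \fkt_n(q)$ containing $\cI$ whose underlying Lie subalgebra is a polarization of $\lambda$ — that is, a maximal subspace on which the alternating form $(X,Y) \mapsto \lambda([X,Y])$ vanishes. Since $\cI$ is abelian it is automatically isotropic, and any isotropic subspace extends to a polarization, so such $\fk p$ exists and ``absorbs'' the part of $\fkt_n(q)$ on which the truncated exponential is delicate. One then verifies that $P := 1+\fk p$ is an algebra subgroup, that $\theta \circ \lambda \circ \log$ is a well-defined linear character of $P$, and finally that
\[
\logpsi_\lambda \;=\; \Ind_P^{\UT_n(q)}\!\bigl(\theta \circ \lambda \circ \log\bigr),
\]
which realizes $\logpsi_\lambda$ as the induction of a linear character and hence as a character of $\UT_n(q)$.

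The main obstacle is showing that $\exp$ converts the algebra structure on $\fk p$ into the group structure on $P$ compatibly with $\theta \circ \lambda$. When $\fkn^p = 0$ this is routine: the Baker--Campbell--Hausdorff series terminates at order $p-1$, its coefficients avoid $p$ in their denominators, and $\exp(X)\exp(Y) = \exp(\mathrm{BCH}(X,Y))$ holds identically. In our weaker setting BCH terminates only at order $2p-1$, and the terms of orders $p \leq k < 2p$ can involve $1/p$. The technical heart of the proof is to show that these problematic higher-order contributions are Lie polynomials of length $\geq p$ in elements of $\fk p$, hence lie in $\fk p \cap \cI$, and that a careful use of the polarization identity $\lambda([\fk p, \fk p]) = 0$ together with the abelian structure of $\cI$ makes them invisible to $\theta \circ \lambda$. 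This is the payoff of the inclusion $\cI \subset \fk p$ and is the step where Sangroniz's factor-of-two improvement over Theorem \ref{Corollary 3} is really earned; everything else then follows from the standard induction/polarization machinery exactly as in the case $\fkn^p = 0$.
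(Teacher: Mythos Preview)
The paper itself supplies no proof of this corollary; it is stated with attribution to Sangroniz \cite{Sangroniz} and the argument is not reproduced. So there is no ``paper's proof'' to compare against, and your proposal must be assessed on its own merits (and, implicitly, against what Sangroniz actually does).

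Your overall strategy---reduce to showing each $\logpsi_\lambda$ is a character, then realize it as $\Ind_P^G(\theta\circ\lambda\circ\log)$ for a polarizing algebra subgroup $P=1+\fk p$ containing $1+\cI$---is indeed the shape of the orbit-method argument, and identifying $\cI=\fkt_n(q)^p$ with $\cI^2=0$ as the key structural feature is exactly right. However, two steps in your sketch are genuinely incomplete, and both sit at the heart of the matter rather than at the periphery.

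First, you assert that a polarization $\fk p$ containing $\cI$ can be chosen to be an \emph{associative} subalgebra, but you only argue that $\cI$ is isotropic and hence extends to some maximal isotropic subspace. Maximal isotropic subspaces for $\omega_\lambda(X,Y)=\lambda([X,Y])$ need not be closed under the associative product; Vergne's construction gives a Lie subalgebra, not an associative one. Getting an associative $\fk p$ (so that $P=1+\fk p$ really is an algebra subgroup) requires a construction specific to $\fkt_n(q)$, and this is not a formality.

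Second, and more seriously, your treatment of the BCH terms in degrees $p\le k<2p$ does not parse. You write that these terms ``lie in $\fk p\cap\cI$'' and are then killed by $\lambda$, but in characteristic $p$ these terms are not elements of $\fkn$ at all: their coefficients involve $1/p$ and are simply undefined over $\FF_q$. One cannot argue that an undefined expression lies in a particular ideal. The correct argument must either avoid BCH entirely---verifying directly, via an explicit polynomial identity over $\FF_q$, that $\lambda\bigl(\log(gh)\bigr)=\lambda(\log g)+\lambda(\log h)$ for $g,h\in P$---or pass through an integral lift and control $p$-divisibility carefully before reducing. Either route is delicate and constitutes the real content of Sangroniz's improvement; the phrase ``a careful use of the polarization identity \ldots makes them invisible'' names the destination but not the path.
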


Thus exponential Kirillov functions give us more characters than ordinary Kirillov functions in a quite concrete sense, but only in odd characteristic.  Still, $\logpsi_\lambda$ can fail to be a character because its values all lie in $\QQ(\zeta_p)$ yet for $n>6p$ there are characters of $\UT_n(q)$ which take values outside $\QQ(\zeta_p)$ %by results of Isaacs, Karagueuzian, Vera-Lopez, and Arregi 
(see \cite{IK1,IK2,IK05,VeraLopez2004}).  We actually construct such characters in 
\cite{supp1}, and the calculations undertaken there suggest the following 
conjecture.

%\begin{corollary}
% If $n>12$  then there exists a Kirillov function of $\UT_n(q)$ which is not a character, and if $n>6p$ then there exists an exponential Kirillov function of $\UT_n(q)$ which is not a character.   
%\end{corollary}

\begin{conjecture}% Let $p>0$ be the characteristic of $\FF_q$.  Then 
$\Irr\(\UT_n(q)\) = \left\{ \logpsi_\lambda : \lambda \in \fkt_n(q)^*\right\}$ if and only if $n\leq 6p$.  
\end{conjecture}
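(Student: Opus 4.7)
The proof plan rests on the observation that both $\{\logpsi_\lambda : \lambda \in \fkt_n(q)^*\}$ and $\Irr(\UT_n(q))$ are orthonormal bases for the space of complex class functions on $\UT_n(q)$ (by Observation \ref{obs1} and the standard orthogonality relations) of equal cardinality. Hence the conjectured equality of sets reduces to the assertion that every exponential Kirillov function is an irreducible character.

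For the ``only if'' direction I would invoke the constructions announced in the supplementary work \cite{supp1}, which produce for every $n > 6p$ an explicit irreducible character of $\UT_n(q)$ whose values generate a cyclotomic extension strictly containing $\QQ(\zeta_p)$. Since each $\logpsi_\lambda$ is by construction a $\QQ$-linear combination of the functions $\theta_\nu$ and therefore takes values only in $\QQ(\zeta_p)$, no such irreducible character can equal any exponential Kirillov function, giving the desired strict inequality.

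For the ``if'' direction, the case $n < 2p$ is already covered by Corollary \ref{sang}, so the substance lies in the range $2p \leq n \leq 6p$. Here I would combine Theorem \ref{otto-gen}, which places $\logpsi_\lambda$ in the complex span of $\Irr(\UT_n(q),\xi_\lambda)$, with the goal of proving that $\xi_\lambda$ is \emph{irreducible} for every $\lambda \in \fkt_n(q)^*$ under the hypothesis $n \leq 6p$. Granted this irreducibility, $\logpsi_\lambda$ must be a complex scalar multiple of the unique constituent of $\xi_\lambda$; the orthonormality of exponential Kirillov functions together with the fact that both $\logpsi_\lambda(1) = \sqrt{|\lambda^G|}$ and $\xi_\lambda(1)$ are positive real numbers of equal magnitude then forces $\logpsi_\lambda = \xi_\lambda \in \Irr(\UT_n(q))$.

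The main obstacle is proving the universal irreducibility of $\xi_\lambda$ when $n \leq 6p$. I would attack this through the explicit irreducibility criterion for $\xi_\lambda$ developed in Section \ref{3} in terms of the bilinear forms $B_\lambda$ and the iterated chain of subalgebras $\fkt_n(q) \supset \fk l_\lambda \supset \fk s_\lambda \supset \cdots$, and attempt to verify that this criterion is automatically satisfied whenever $n \leq 6p$. A plausible mechanism is that when $n \leq 6p$ the chain stabilizes at an algebra subgroup of nilpotency class strictly less than $p$, on which Sangroniz's theorem (Theorem \ref{Corollary 3}) ensures that a fully ramified supercharacter has a single irreducible constituent. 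The sharpness at $n = 6p$ ought to manifest as a combinatorially identifiable family of $\lambda$ first appearing at $n = 6p+1$ for which this chain condition fails, and matching these to the non-cyclotomic irreducible characters constructed in \cite{supp1} is where the genuine difficulty --- and presumably the central new idea --- resides.
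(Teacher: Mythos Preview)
The statement you are attempting to prove is labeled a \emph{conjecture} in the paper; no proof is given, and the ``if'' direction is regarded as open. So there is no paper proof to compare against.

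Your ``only if'' argument via non-cyclotomic character values is exactly the reasoning the paper sketches just before stating the conjecture, citing \cite{IK1,IK2,IK05,VeraLopez2004} and \cite{supp1} for the existence of irreducible characters of $\UT_n(q)$ with values outside $\QQ(\zeta_p)$ when $n>6p$.

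Your proposal for the ``if'' direction, however, contains a genuine gap. You plan to show that $\xi_\lambda$ is irreducible for every $\lambda$ when $n\le 6p$, and you suggest this follows once the terminal algebra $\olfks_\lambda$ has nilpotency class below $p$, because then ``Sangroniz's theorem ensures that a fully ramified supercharacter has a single irreducible constituent.'' That implication is false. Theorem~\ref{Corollary 3} asserts only that every irreducible character is an exponential Kirillov function when $\fkn^p=0$; it says nothing about the number of constituents of a fully ramified supercharacter. By the corollary following Theorem~\ref{structural}, the fully ramified supercharacter $\chi_\mu$ of $\ols_\lambda$ has as many irreducible constituents as there are coadjoint $\ols_\lambda$-orbits in $\mu\ols_\lambda$, and small nilpotency class does not force this number to be~$1$. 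Moreover, proving $\xi_\lambda$ irreducible for all $\lambda$ is strictly stronger than the conjecture: the paper states as a \emph{separate} open conjecture that $\xi_\lambda$ is always irreducible when $n\le 12$ (the $p=2$ instance of your intermediate claim), and Example~\ref{?} exhibits, for an algebra subgroup of $\UT_5(q)$ in odd characteristic, a $\lambda$ with $\xi_\lambda=\chi_\lambda$ reducible while $\psi_\lambda=\logpsi_\lambda$ is irreducible and well-induced. So irreducibility of $\xi_\lambda$ is a genuinely harder question than whether $\logpsi_\lambda\in\Irr(G)$, and your proposed mechanism does not bridge the gap.
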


%While one can verify this conjecture using computers  for $p=2$ (see \cite{E}),   examining even the case $p=3$ exits the realm of currently feasible calculations.  Also, there is no proof to date that the characters of $\UT_n(q)$ are necessarily $\QQ(\zeta_p)$-valued if $n\leq 6p$.  Nevertheless, it seems possible %, just from the experience of proving Lemma \ref{technical},
% that $n=6p$ is the ``breaking point'' after which the algebra groups $\UT_n(q)$ can manifest  irreducible characters which are not exponential Kirillov functions.  %We mention that computer experiments suggest that one might be able to use a clever combinatorial argument$-$combining Lemmas 9, 10, and 11 in \cite{Sangroniz} with Proposition \ref{exp-criterion} and Lemma \ref{monomial}%, and the fact that each supercharacter of $\UT_n(q)$ is indexed by a unique quasi-monomial $\lambda \in \fkt_n(q)^*$$-$to show that the ``if'' direction of this conjecture holds for $n\leq 3p$.  Improving this bound to $n\leq 6p$ probably will require more robust techniques, however.

\section{Iterative character constructions}\label{3}

Throughout this section, $\fkn$ denotes a fixed nilpotent $\FF_q$-algebra and $G=1+\fkn$.
As described above, each $\lambda \in \fkn^*$ naturally indexes the two functions $G\to \CC$ given by the Kirillov function $\psi_\lambda$ and the supercharacter $\chi_\lambda$.  In this section we associate to $\lambda$ a third map $\xi_\lambda : G\to \CC$, in some sense interpolating between $\psi_\lambda$ and $\chi_\lambda$.
Like $\chi_\lambda$ the function $\xi_\lambda$ will always be a character.  However, $\xi_\lambda$ will typically have fewer irreducible constituents, and so will provide a more accurate, if less computable, approximation to the irreducible characters of $G$.    %This greater control over $\Irr(G)$   will allow us to construct in the next section ``exotic'' irreducible characters of $\UT_n(q)$ taking values outside an arbitrary cyclotomic field.
  The crucial step in our constructions depends on a result concerning the irreducible constituents of a supercharacter, first observed by Andr\'e for the unitriangular group \cite{AndreHecke} and subsequently generalized to algebra groups by Andr\'e and Nicol\'as \cite{AndreAdjoint}.

\subsection{Decomposing supercharacters}\label{fk}

\def\rad{\mathrm{rad}}
\def\radl{\ker_{\mathrm{L}}}
\def\radr{\ker_{\mathrm{R}}}

%If $V,W$ are vector spaces over a field $\FF$ and $B : V\times W \to \FF$ is a bilinear map, then the \emph{left and right kernels} of $B$ are the subspaces
%\[ \radl(B) = \{ v \in V : B(v,W) = 0\}\qquad\text{and}\qquad
%\radr(B) = \{ w \in W : B(V,w)=0\}.\]
For each $\lambda \in \fkn^*$,  define two subspaces $\fk l_\lambda, \fk s_\lambda \subset\fkn$ by 
\[ \fk l_\lambda = \{ X \in \fkn : \lambda(XY) =0\text{ for all }Y \in \fkn\}\qquad\text{and}\qquad
\fk s_\lambda = \{ X \in \fkn : \lambda(XY) =0\text{ for all }Y\in \fk l_\lambda\}.\]  Alternatively, one constructs the subspace $\fk l_\lambda$ as the left kernel of the bilinear form 
%$ B_\lambda : \fkn \times \fkn \to  \FF_q$ given by $(X,Y)\mapsto\lambda(XY)$, 
\[\barr{cccc} B_\lambda : &\fkn \times \fkn &\to & \FF_q \\
& (X,Y) & \mapsto & \lambda(XY)\earr
\]% \qquad
% \text{and}
% \qquad
% \barr{llll} B'_\lambda : &\fkn \times \radl(B_\lambda) &\to & \FF_q \\
%& (X,Y) & \mapsto & \lambda(XY).\earr
%\]  
and $\fk s_\lambda$ as the left kernel of the restriction of $B_\lambda$ to the domain $\fkn \times \fk l_\lambda$.  One simultaneously checks the following facts: %that these two subspaces are in fact subalgebras of $\fkn$.  
\begin{enumerate}
\item[(a)] The subspace $\fk s_\lambda$ is a subalgebra of $\fkn$.
\item[(b)] The subspace $\fk l_\lambda$ is a right ideal of $\fkn$ and a two-sided ideal of $\fk s_\lambda$.
\end{enumerate}
We therefore may define $L_\lambda\vartriangleleft S_\lambda \subset G$ as the algebra subgroups 
$L_\lambda = 1+\fk l _\lambda$ and $S_\lambda = 1+\fk s_\lambda$.  This notation conforms with our definition of $L_\lambda$ in Section \ref{superchars} because of the following lemma.  %\[L_\lambda = 1+\fk l_\lambda\qquad\text{and}\qquad S_\lambda = 1+\fk s_\lambda.\] %  In addition, write $\cO_\lambda = \{ \mu \downarrow \fk s_\lambda \in \fk s_\lambda^* : \mu \in \lambda G\}$.  

\begin{lemma}\label{5.1}% Let $\fkn$ be a finite-dimensional nilpotent $\FF_q$-algebra and write $G=1+\fkn$.  Then 
For each $\lambda \in \fkn^*$ we have 
\[ L_\lambda = \{ g \in G : g\lambda = \lambda\}\qquad\text{and}\qquad S_\lambda = \{ g \in G : g\lambda \in G\lambda \cap \lambda G\}.\]  In particular, $|S_\lambda| / |L_\lambda| = |G\lambda \cap \lambda G| = \langle \chi_\lambda,\chi_\lambda \rangle_G$.
%\[ L_\lambda = \{ g \in G : g\lambda = \lambda\}\qquad\text{and}\qquad S_\lambda = \{ g \in G : g\lambda \in G\lambda \cap \lambda G\}.\]
\end{lemma}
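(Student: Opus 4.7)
The plan is to verify the two set equalities in turn and then read off the cardinality assertion. Both proofs proceed by writing $g = 1+Y$ with $Y \in \fkn$, setting $Y' = g^{-1}-1 \in \fkn$, and using the elementary identities
\[
g\lambda(X) - \lambda(X) = \lambda(Y'X) \qquad\text{and}\qquad \lambda h(X) - \lambda(X) = \lambda(XW')
\]
for $g = 1+Y$, $h = 1+W$, and $W' = h^{-1}-1$, which follow straight from the definitions of the left and right actions.

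For the first equality, the identity above shows that $g\lambda = \lambda$ iff $\lambda(Y'X) = 0$ for all $X \in \fkn$, i.e., $Y' \in \fk l_\lambda$. Since $L_\lambda = 1+\fk l_\lambda$ is a group, this is equivalent to $Y \in \fk l_\lambda$ and hence to $g \in L_\lambda$.

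For the second equality, the containment $g\lambda \in G\lambda$ is automatic, so I only need to characterize when $g\lambda \in \lambda G$. I would introduce the linear maps $\phi, \psi : \fkn \to \fkn^*$ defined by $\phi(Y)(X) = \lambda(YX)$ and $\psi(Z)(X) = \lambda(XZ)$. The identities above translate $g\lambda = \lambda h$ into $\phi(Y') = \psi(W')$, so using that $h \mapsto W'$ is a bijection $G \to \fkn$, the condition $g\lambda \in \lambda G$ becomes $\phi(Y') \in \psi(\fkn)$. The main step is now to identify $\psi(\fkn)$ as a subspace of $\fkn^*$: by definition its annihilator in $\fkn$ is $\{X \in \fkn : \lambda(XZ)=0\ \forall Z\in\fkn\} = \fk l_\lambda$, so by the double-annihilator duality for finite-dimensional vector spaces, $\psi(\fkn) = \{f \in \fkn^* : f\downarrow\fk l_\lambda = 0\}$. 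Therefore $\phi(Y') \in \psi(\fkn)$ iff $\lambda(Y'W)=0$ for every $W \in \fk l_\lambda$, iff $Y' \in \fk s_\lambda$, iff $g \in S_\lambda$ (again using that $S_\lambda = 1+\fk s_\lambda$ is a group, since $\fk s_\lambda$ is a subalgebra).

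The cardinality assertion then follows formally: the map $G \to \fkn^*$ given by $g \mapsto g\lambda$ has fibers of constant size $|L_\lambda|$, so the preimage of the subset $G\lambda \cap \lambda G \subset G\lambda$ has exactly $|L_\lambda| \cdot |G\lambda \cap \lambda G|$ elements; combining this with Remark (i) of Section \ref{alg} yields $|S_\lambda|/|L_\lambda| = |G\lambda \cap \lambda G| = \langle \chi_\lambda, \chi_\lambda \rangle_G$. The only nontrivial step is the duality identification of $\psi(\fkn)$ as the annihilator of $\fk l_\lambda$, which is precisely what converts membership in $\lambda G$ into the vanishing condition defining $\fk s_\lambda$; everything else is unwinding definitions.
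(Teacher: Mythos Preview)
Your argument is correct, and it takes a genuinely different route from the paper's for the characterization of $S_\lambda$. The paper argues by \emph{counting}: it defines $S'_\lambda = \{g \in G : g\lambda \in G\lambda \cap \lambda G\}$, proves the easy inclusion $S'_\lambda \subset S_\lambda$ by the same manipulation you use (if $g^{-1}\lambda = \lambda h^{-1}$ then $\lambda(XY) = \lambda(YZ) = 0$ for $Y \in \fk l_\lambda$), and then separately computes $|S_\lambda| = |G|/|\lambda L_\lambda|$ and $|S'_\lambda| = |G|/|\lambda L_\lambda|$ using orbit-size identities borrowed from \cite{DI} (in particular $|G|/|L_\lambda \cap R_\lambda| = |G\lambda G|$ and $|G\lambda G| = |G\lambda|\,|\lambda G|/|G\lambda \cap \lambda G|$), concluding equality by cardinality. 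Your approach instead identifies $\psi(\fkn)$ directly as the annihilator $(\fk l_\lambda)^\perp$ via double-duality, which gives both inclusions at once and is entirely self-contained: you never need the Diaconis--Isaacs orbit lemmas. What the paper's approach buys is that the chain of equalities (\ref{size}) exhibits several useful numerical relations in one place; what yours buys is a cleaner, purely linear-algebraic proof that does not depend on those external inputs.
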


\begin{proof}
The characterization of $L_\lambda$ is shown in \cite[Lemma 4.2]{DI}.  By definition, $1+X \in S_\lambda$ if and only if $X \in \ker(\lambda g-\lambda)$ for all $g \in L_\lambda$.  The set $\{ \lambda g-\lambda : g \in L_\lambda\}$ is a subspace of $\fkn^*$ of cardinality $|\lambda L_\lambda|$, and so it follows that  $|S_\lambda| = |G| / |\lambda L_\lambda|$.  Define 
\[ S_\lambda' =  \{ g \in G : g\lambda \in G\lambda \cap \lambda G\}\qquad\text{and}\qquad R_\lambda = \{ g \in G : \lambda g = \lambda\}.\]  Observe that $L_\lambda \subset S'_\lambda$ and so $|S'_\lambda| / |L_\lambda| = |G\lambda \cap \lambda G|$.  
  By \cite[Lemma 4.3]{DI}, $|G|/|L_\lambda\cap R_\lambda| = |G\lambda G|$ and so
\be\label{size} |\lambda L_\lambda| = \frac{|L_\lambda| }{ |L_\lambda \cap R_\lambda| }= \frac{|G|/|L_\lambda \cap R_\lambda|}{|G|/|L_\lambda|} = \frac{|G\lambda G|}{|G\lambda|} = \frac{|G\lambda|}{|G\lambda \cap \lambda G|} = \frac{|G|/|L_\lambda|}{|S_\lambda'|/|L_\lambda|} ={|G|}/{|S_\lambda'|}. \ee  We thus have $|S_\lambda|= |S_\lambda'|$.  On the other hand, if $g = 1 +X \in S_\lambda'$ then $g^{-1}\lambda = \lambda h^{-1}$ for some $h=1+Z \in G$, in which case if $Y \in \fk l_\lambda$ then $\lambda(XY) = (g^{-1}\lambda-\lambda) (Y)  = (\lambda h^{-1}-\lambda) (Y) = \lambda(YZ) = 0$.  Hence $X \in \fk s_\lambda$ so $g \in S_\lambda$, which implies $S_\lambda' \subset S_\lambda$ and by order considerations $S_\lambda' = S_\lambda$.  
 \end{proof}

The algebra subgroups we call $L_\lambda$ and $S_\lambda$ are thus the same as those defined in Section 4 of \cite{AndreAdjoint}, which takes the preceding lemma as a definition.  Also, one sees that $\chi_\lambda$ is fully ramified if and only if $S_\lambda = G$.

%It is immediate that the function $\theta_\lambda$ restricts to a linear character of $L_\lambda$.  By \cite[?]{?}, the supercharacter $\chi_\lambda$ is obtained by inducing this character to $G$; i.e., $\chi_\lambda = \Ind_{L_\lambda}^G(\theta_\lambda)$.  
%
%Let $\mu \in \fk s_\lambda^*$ such that $\mu = \lambda\downarrow \fk s_\lambda$.  Then $\chi_\mu = \Ind_{L_\lambda}^{S_\lambda}(\theta_\lambda)$ is a supercharacter of $S_\lambda$ and $\chi_\lambda = \Ind_{S_\lambda}^G(\chi_\mu)$.  
%
Since  the function $\theta_\lambda : G \to \CC$ restricts to a linear character of $L_\lambda$ and  $\chi_\lambda = \Ind_{L_\lambda}^G(\theta_\lambda)$, it follows by the transitivity of induction that any irreducible constituent of $\Ind_{L_\lambda}^{S_\lambda}(\theta_\lambda)$ becomes a constituent of $\chi_\lambda$ on induction to $G$.  The following theorem asserts the somewhat surprising fact that induction from $S_\lambda$ to $G$ in this situation actually preserves irreducibility.    Andr\'e first observed this phenomenon in the special case $G = \UT_n(q)$ \cite[Theorem 2]{AndreHecke}.  Later, Andr\'e and Nichol\'as generalized this to algebra groups (more precisely, their generalization applies to a family of groups which includes and extends algebra groups) \cite[Theorem 4.2]{AndreAdjoint}.

%Define $\cO_\lambda \subset \fk s_\lambda^*$ as  the set given by restricting all elements of the right $G$-orbit of $\lambda$ to $\fk s_\lambda$; i.e., 
%\[\cO_\lambda= \{ \mu \downarrow \fk s_\lambda  : \mu \in \lambda G\}.\]

\begin{theorem}[Andr\'e and Nicol\'as \cite{AndreAdjoint}] \label{andre}  Let $\fkn$ be a finite-dimensional nilpotent $\FF_q$-algebra and write $G=1+\fkn$.   If $\lambda \in \fkn^*$ then the  map \[ \barr{ccc} \Irr\(S_\lambda,\Ind_{L_\lambda}^{S_\lambda}(\theta_\lambda)\) & \to & \Irr(G,\chi_\lambda) \\ \psi & \mapsto & \Ind_{S_\lambda}^{G}(\psi) \earr\] is a bijection. 
\end{theorem}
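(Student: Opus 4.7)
The plan is to exploit the Clifford-theoretic structure of the normal inclusion $L_\lambda \vartriangleleft S_\lambda$. The crux of the argument is to show that $\theta_\lambda$ is $S_\lambda$-invariant as a linear character of $L_\lambda$. For $s = 1+Z \in S_\lambda$ and $h = 1+Y \in L_\lambda$, writing $s^{-1} = 1+Z'$ so that $Z+Z'+ZZ' = 0$, a direct expansion gives $shs^{-1} = 1 + sYs^{-1}$ with $sYs^{-1} = Y + ZY + YZ' + ZYZ' \in \fkn$. Applying $\lambda$ to $sYs^{-1} - Y$: the term $\lambda(ZY)$ vanishes by the definition of $\fk s_\lambda$ (since $Z \in \fk s_\lambda$ and $Y \in \fk l_\lambda$); the term $\lambda(YZ')$ vanishes by the definition of $\fk l_\lambda$; and $\lambda(ZYZ')$ vanishes because $\fk l_\lambda$ is a right ideal of $\fkn$ (so $YZ' \in \fk l_\lambda$), combined again with the definition of $\fk s_\lambda$. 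Hence $\theta_\lambda(shs^{-1}) = \theta(\lambda(Y)) = \theta_\lambda(h)$, and $\theta_\lambda$ is $S_\lambda$-invariant.

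Once invariance is established, Clifford theory applied to the $S_\lambda$-invariant linear character $\theta_\lambda$ of $L_\lambda$ yields the decomposition
\[ \Ind_{L_\lambda}^{S_\lambda}(\theta_\lambda) \;=\; \sum_{\psi} \psi(1)\, \psi, \]
where $\psi$ ranges over $\Irr\bigl(S_\lambda,\, \Ind_{L_\lambda}^{S_\lambda}(\theta_\lambda)\bigr)$: each such $\psi$ satisfies $\Res_{L_\lambda}^{S_\lambda}\psi = \psi(1)\theta_\lambda$ and therefore occurs in $\Ind_{L_\lambda}^{S_\lambda}(\theta_\lambda)$ with multiplicity $\psi(1)$ by Frobenius reciprocity. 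Evaluating at the identity gives $\sum_\psi \psi(1)^2 = [S_\lambda : L_\lambda]$. On the other hand, Lemma \ref{5.1} already furnishes $\langle \chi_\lambda, \chi_\lambda\rangle_G = |G\lambda \cap \lambda G| = [S_\lambda : L_\lambda]$.

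By transitivity of induction, $\chi_\lambda = \Ind_{S_\lambda}^G\!\bigl(\Ind_{L_\lambda}^{S_\lambda}(\theta_\lambda)\bigr) = \sum_\psi \psi(1)\, \Ind_{S_\lambda}^G(\psi)$. Writing $\chi_\psi = \Ind_{S_\lambda}^G(\psi)$, the two computations above combine into the chain
\[
[S_\lambda : L_\lambda] \;=\; \langle \chi_\lambda, \chi_\lambda\rangle_G \;=\; \sum_{\psi,\psi'} \psi(1)\psi'(1)\,\langle \chi_\psi, \chi_{\psi'}\rangle_G \;\geq\; \sum_\psi \psi(1)^2 \langle \chi_\psi, \chi_\psi\rangle_G \;\geq\; \sum_\psi \psi(1)^2 \;=\; [S_\lambda : L_\lambda].
\]
Both inequalities use that inner products of characters are nonnegative integers, with $\langle \chi_\psi,\chi_\psi\rangle_G \geq 1$. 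Equality throughout therefore forces $\langle \chi_\psi, \chi_\psi\rangle_G = 1$ and $\langle \chi_\psi, \chi_{\psi'}\rangle_G = 0$ for $\psi \neq \psi'$, so each $\chi_\psi$ is irreducible and the $\chi_\psi$ are pairwise distinct. This gives injectivity; surjectivity onto $\Irr(G,\chi_\lambda)$ is immediate from $\chi_\lambda = \sum_\psi \psi(1)\chi_\psi$, since every constituent of $\chi_\lambda$ must then appear among the $\chi_\psi$.

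The main obstacle is the invariance computation in the first paragraph. Though short, it is the engine driving everything else, and it crucially uses \emph{both} defining identities $\lambda(\fk l_\lambda \cdot \fkn) = 0$ and $\lambda(\fk s_\lambda \cdot \fk l_\lambda) = 0$ together with the right-ideal structure of $\fk l_\lambda$ inside $\fkn$. Everything downstream is a standard Clifford-theoretic inner-product juggle, tied together by the equality $[S_\lambda : L_\lambda] = \langle \chi_\lambda,\chi_\lambda\rangle_G$ supplied by Lemma \ref{5.1}.
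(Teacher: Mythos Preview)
Your proof is correct and follows essentially the same strategy as the paper's sketch: use Clifford theory (via the $S_\lambda$-invariance of $\theta_\lambda$ on $L_\lambda$) to obtain the multiplicity-$\psi(1)$ decomposition of $\Ind_{L_\lambda}^{S_\lambda}(\theta_\lambda)$, and then force irreducibility and distinctness of the induced characters $\Ind_{S_\lambda}^G(\psi)$ by a numerical squeeze. The only cosmetic difference is the choice of squeeze: you compare $\langle \chi_\lambda,\chi_\lambda\rangle_G = [S_\lambda:L_\lambda]$ from Lemma~\ref{5.1} against $\sum_\psi \psi(1)^2$, whereas the paper compares the multiplicity of $\Ind_{S_\lambda}^G(\psi)$ in the regular representation against its degree via (\ref{decomp}); your version has the minor virtue of supplying the invariance calculation explicitly rather than citing \cite{AndreAdjoint}.
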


The idea of the proof is quite simple after observing a few technical details, and we describe a sketch of the one given in \cite{AndreAdjoint}.

\begin{proof}[Sketch of proof]
By Clifford theory (cf. \cite[Theorem 4.1]{AndreAdjoint}), each irreducible constituent $\psi$ of $\Ind_{L_\lambda}^{S_\lambda}(\theta_\lambda)$ appears with multiplicity $\psi(1)$.  It follows from the transitivity of induction and (\ref{decomp}) that $\Ind_{S_\lambda}^{G}(\psi)$ appears as a constituent of the character of the regular representation of $G$ with multiplicity at least $ \frac{|G\lambda G|}{|G\lambda|} \psi(1)= \frac{|G\lambda|}{|G\lambda \cap \lambda G|} \psi(1)= \frac{|G|/|L_\lambda|}{|S_\lambda|/|L_\lambda|}\psi(1) =\Ind_{S_\lambda}^G(\psi)(1) $.   Since the multiplicity of any irreducible character of $G$ in the character of the regular representation is its degree, this identity forces the given map to be well-defined, injective, and surjective. %implies that each $\Ind_{S_\lambda}^G(\psi)$ is a distinct  irreducible character of $G$, which subsequently makes the map in (2) well-defined and injective.  The surjectivity of the given map is then automatic from the transitivity of induction.
\end{proof}

The character $\Ind_{L_\lambda}^{S_\lambda}(\theta_\lambda)$ is a superclass function but not necessarily a supercharacter of $S_\lambda$.  However, it does have an obvious supercharacter as a constituent: namely, the one given by $\Ind_{H}^{S_\lambda}(\theta_\lambda)$ where $H\supset L_\lambda$ is the left stabilizer in $S_\lambda$ of the restricted linear map $\lambda \downarrow \fk s_\lambda \in (\fk s_\lambda)^*$.  This observation readily suggests an inductive method of constructing constituents of $\chi_\lambda$.  We explore this idea in the next section, but first we note a corollary of the preceding theorem concerning the problem of counting the constituents of $\chi_\lambda$.  

To state this we require an additional item of notation.  Given $\lambda \in \fkn^*$ define 
\[ \fk k_\lambda = \fk l_\lambda \cap \ker\lambda\qquad\text{and}\qquad K_\lambda = 1+\fk k_\lambda.\]  By construction $XY \in \fk k_\lambda$ for all $(X,Y) \in \fk l_\lambda \times \fk s_\lambda$ and $\fk s_\lambda \times \fk l_\lambda$, so the subspace $\fk k_\lambda$ is a two-sided ideal in $\fk s_\lambda$.  Therefore $K_\lambda$ is a well-defined normal algebra subgroup of $S_\lambda$, and the quotient $L_\lambda / K_\lambda$ is a central subgroup of $ S_\lambda / K_\lambda$.

%if we let \[ Q_\lambda = 1 + \fk s_\lambda / \fk k_\lambda\qquad\text{and}\qquad Z_\lambda = 1 + \fk l _\lambda / \fk k_\lambda\] then $Z_\lambda$ is a central subgroup of $Q_\lambda$.  
These algebra groups relate to the irreducible constituents of $\chi_\lambda$ in the following way.  Here we identify $S_\lambda/ K_\lambda$ and $L_\lambda/K_\lambda$ with algebra groups $1 + \fk s_\lambda / \fk k_\lambda$ and $1 + \fk l_\lambda / \fk k_\lambda$, and let $\pi : 1+X \mapsto 1 + (X+\fk k_\lambda)$ denote the quotient homomorphism $S_\lambda \to S_\lambda / K_\lambda$.

\begin{theorem}\label{count} Let $\fkn$ be a finite-dimensional nilpotent $\FF_q$-algebra, write $G=1+\fkn$, and let $\lambda \in \fkn^*$. 
\begin{enumerate}
\item[(1)] The number of irreducible constituents of the supercharacter $\chi_\lambda$ of $G=1+\fkn$ for $\lambda \in \fkn^*$ is the number of coadjoint $S_\lambda$-orbits in the set $\{ \nu \downarrow \fk s_\lambda : \nu \in \lambda G\}.$%\subset (\fk s_\lambda)^*$.  

\item[(2)] The number of irreducible constituents of $\chi_\lambda$ of degree $q^e$ is equal to the number of irreducible characters $\psi$ of $S_\lambda/K_\lambda$ such that 
\be\label{condition}
\psi\circ \pi(z) =\frac{|G\lambda \cap \lambda G|}{|G\lambda|} \cdot q^e\cdot   \theta_\lambda(z),\qquad\text{for all }z \in L_\lambda.\ee

\item[(3)] If $\ker\lambda\supset (\fk s_\lambda)^2$, then the number of irreducible constituents of $\chi_\lambda$ of degree $q^e$ is equal to the number of irreducible characters of the quotient group $S_\lambda / L_\lambda$ % \cong 1 + \fk s_\lambda / \fk l_\lambda$
 of degree $\frac{|G\lambda \cap \lambda G|}{|G\lambda|} q^e$.   
\end{enumerate}
\end{theorem}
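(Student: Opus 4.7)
The plan is to apply Theorem~\ref{andre} to reduce all three parts to an analysis of $\phi := \Ind_{L_\lambda}^{S_\lambda}(\theta_\lambda)$: under that bijection, a constituent of $\chi_\lambda$ of degree $q^e$ corresponds to a constituent of $\phi$ of degree $q^e/[G:S_\lambda] = q^e |G\lambda\cap\lambda G|/|G\lambda|$ (using $|G|/|L_\lambda|=|G\lambda|$ and Lemma~\ref{5.1}). Since $\theta_\lambda$ vanishes on $K_\lambda$ (as $\fk k_\lambda = \fk l_\lambda\cap\ker\lambda$) and $K_\lambda\vartriangleleft S_\lambda$, the character $\phi$ factors through $\pi$ as $\Ind_{L_\lambda/K_\lambda}^{S_\lambda/K_\lambda}(\bar\theta_\lambda)$, where $\bar\theta_\lambda$ is the descent of $\theta_\lambda$ and $L_\lambda/K_\lambda$ is central in $S_\lambda/K_\lambda$ by the remarks preceding the theorem.

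For (2), Clifford theory applied to the central subgroup $L_\lambda/K_\lambda$ gives that each irreducible $\psi$ of $S_\lambda/K_\lambda$ restricts to $L_\lambda/K_\lambda$ as $\psi(1)\omega_\psi$ for a linear character $\omega_\psi$, and by Frobenius reciprocity the constituents of $\Ind_{L_\lambda/K_\lambda}^{S_\lambda/K_\lambda}(\bar\theta_\lambda)$ are precisely those $\psi$ with $\omega_\psi = \bar\theta_\lambda$. Combining this with the degree scaling from the first paragraph yields (\ref{condition}) after lifting via $\pi$.

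For (1), combine (2) with the supercharacter theory of $S_\lambda/K_\lambda$. The centrality of $L_\lambda/K_\lambda$ in $S_\lambda/K_\lambda$ is equivalent to the inclusions $\fk l_\lambda\fk s_\lambda,\ \fk s_\lambda\fk l_\lambda\subset \fk k_\lambda$ (both checked directly from the definitions). Two consequences follow: (a) for every coadjoint orbit $\mathcal O\subset (\fk s_\lambda/\fk k_\lambda)^*$ and every two-sided $S_\lambda$-orbit $T\subset (\fk s_\lambda/\fk k_\lambda)^*$, the restriction $\nu\downarrow \fk l_\lambda/\fk k_\lambda$ is constant on $\mathcal O$ and on $T$, by direct commutator calculation using (a); and (b) evaluating the supercharacter formula (\ref{superchar-def}) on $L_\lambda/K_\lambda$ gives $\chi_T\downarrow L_\lambda/K_\lambda = \chi_T(1)\bar\theta_\mu$ for any $\mu\in T$, so every irreducible constituent of $\chi_T$ has central character $\bar\theta_\mu$ on $L_\lambda/K_\lambda$. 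Summing Remark~(iv) over two-sided orbits $T$ whose restriction to $\fk l_\lambda/\fk k_\lambda$ matches that of $\lambda$ then yields
\[|\{\psi\in\Irr(S_\lambda/K_\lambda) : \omega_\psi = \bar\theta_\lambda\}| = \sum_T |\Irr(S_\lambda/K_\lambda, \chi_T)| = \sum_T |\text{coadjoint orbits in } T|,\]
which equals the number of coadjoint $S_\lambda$-orbits in $\{\nu\in\fk s_\lambda^* : \nu\downarrow \fk l_\lambda = \lambda\downarrow \fk l_\lambda\}$. A dimension count (linearizing $g\mapsto \lambda g\downarrow \fk s_\lambda$ via the substitution $h=g^{-1}-1$ and applying rank-nullity to $B_\lambda$ restricted to $\fk s_\lambda\times\fkn$) identifies this with the number of coadjoint $S_\lambda$-orbits in $R := \{\nu\downarrow \fk s_\lambda : \nu\in\lambda G\}$, as both are affine subspaces of $\fk s_\lambda^*$ of dimension $\dim\fk s_\lambda/\fk l_\lambda$ with one contained in the other.

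For (3), the hypothesis $\ker\lambda\supset (\fk s_\lambda)^2$ gives $\theta_\lambda((1+X)(1+Y)) = \theta_\lambda(1+X)\theta_\lambda(1+Y)$ for all $X,Y\in\fk s_\lambda$, so $\theta_\lambda$ extends to a linear character $\tilde\theta_\lambda$ of $S_\lambda$. By the projection formula, $\phi = \tilde\theta_\lambda\cdot\Ind_{L_\lambda}^{S_\lambda}(\mathbf{1})$; since $L_\lambda\vartriangleleft S_\lambda$, the latter factor is the lift to $S_\lambda$ of the regular representation of $S_\lambda/L_\lambda$. Hence the irreducible constituents of $\phi$ are $\tilde\theta_\lambda\otimes\tilde\psi$ for $\psi\in\Irr(S_\lambda/L_\lambda)$, each of degree $\psi(1)$, and the degree scaling gives (3). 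The main obstacle is the combinatorial identification in (1): verifying (a) and (b) by direct algebraic manipulation with the defining properties of $\fk l_\lambda,\fk s_\lambda,\fk k_\lambda$ so that the central-character partition of irreducibles of $S_\lambda/K_\lambda$ and the $\fk l_\lambda/\fk k_\lambda$-restriction partition of its coadjoint orbits match up two-sided-orbit by two-sided-orbit.
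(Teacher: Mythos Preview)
Your argument is correct, and for parts (2) and (3) it matches the paper's proof essentially verbatim: reduce via Theorem~\ref{andre} to $\phi=\Ind_{L_\lambda}^{S_\lambda}(\theta_\lambda)$, factor through $K_\lambda$, and use centrality of $L_\lambda/K_\lambda$ (respectively the extension of $\theta_\lambda$ to $S_\lambda$) exactly as you do.

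For part (1), however, the paper takes a shorter and more direct route. Rather than deducing (1) from (2) via the central-character partition of $\Irr(S_\lambda/K_\lambda)$ and then a separate dimension count, the paper computes the expansion of $\phi$ in the orthonormal basis $\{\theta_\nu\}_{\nu\in\fk s_\lambda^*}$ directly: one checks that $\phi(g)=\frac{|S_\lambda|}{|G|}\sum_{\nu\in\lambda G}\theta_\nu(g)$ for $g\in S_\lambda$, and that the fibres of the restriction map $\lambda G\to\fk s_\lambda^*$ over its image $\cO=\{\nu\downarrow\fk s_\lambda:\nu\in\lambda G\}$ all have size $|G|/|S_\lambda|$, giving $\phi=\sum_{\nu\in\cO}\theta_\nu$. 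Since $\cO$ is visibly a union of two-sided $S_\lambda$-orbits (by Lemma~\ref{5.1}), $\phi$ is a sum of supercharacters of $S_\lambda$, and Remark~(iv) applied once to $\phi$ finishes. This avoids your detour through $S_\lambda/K_\lambda$, the orbit-by-orbit bookkeeping in (a)--(b), and the final identification of the two affine subspaces; your rank-nullity step is in effect replaced by the fibre-size computation, which is what the paper uses to show the two sets you compare actually coincide. Your approach does make the implication (2)$\Rightarrow$(1) explicit, but at the cost of a longer argument.
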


In fact, in (2) we have not only an equality of counts but a canonical bijection: the characters $\psi \circ \pi$ with $\psi$ ranging over all $\psi \in \Irr(S_\lambda / K_\lambda)$ satisfying our conditions precisely comprise the left-hand set in the bijection of Theorem \ref{andre}.  
As mentioned earlier, a theorem of Isaacs \cite{I95} asserts that the degrees of the irreducible characters of an algebra group over $\FF_q$ are $q$-powers, so the degrees in (2) and (3) are in fact arbitrary.

%Remark (vi) in Section \ref{alg} asserts that the number of irreducible constituents of $\chi_\lambda$ is the number of coadjoint $G$-orbits in $G\lambda G$.  Computing this number is typically made much easier by the corollary, because the set $\{ \nu \downarrow \fk s_\lambda : \nu \in \lambda G\}$ is an affine space often smaller than $G\lambda G$.

\begin{proof}
%It suffices to show that $\cO\overset{\mathrm{def}}=\{ \nu \downarrow \fk s_\lambda : \nu \in \lambda G\}$ is the two-sided $S_\lambda$-orbit of $\mu \overset{\mathrm{def}}= \lambda\downarrow \fk s_\lambda$ in $(\fk s_\lambda)^*$, since then the number of coadjoint $S_\lambda$-orbits in $\cO$ is the number of irreducible constituents of $\Ind_{L_\lambda}^{S_\lambda}(\theta_\lambda) = \chi_\mu$.  
Let $\cO=\{ \nu \downarrow \fk s_\lambda : \nu \in \lambda G\}$.
The equality $|\lambda G | = |G\lambda| = |G|/|L_\lambda|$ along with the definition of $\fk l_\lambda$ and the characterization of $S_\lambda$ in Lemma \ref{5.1} together imply that for $g \in S_\lambda$, \[ \frac{|L_\lambda|}{| G|} \sum_{\nu \in \lambda G} \theta_\nu(g) = \left\{\barr{ll} \theta_\lambda(g),&\text{if }g \in L_\lambda \\ 0,&\text{otherwise}\earr\right.
\qquad\text{and}\qquad
 \Ind_{L_\lambda}^{S_\lambda}(\theta_\lambda)(g) 
=\frac{|S_\lambda|}{|G|}  \sum_{\nu \in \lambda G} \theta_\nu(g).
\]  Since $L_\lambda \subset S_\lambda$, we have $ \{ \nu \in \fkn^* :\nu \downarrow \fk s_\lambda = 0\} \subset  \{ \nu \in \fkn^* :\nu \downarrow \fk l_\lambda = 0\} = \{ \lambda g -\lambda : g\in G\},$ the equality due to \cite[Lemma 4.2]{DI}.  Since the last set is a vector space, it follows that for each $\kappa \in \fk s_\lambda^*$, the set $\{ \nu \in \lambda G : \nu \downarrow \fk s_\lambda = \kappa \}$ is either empty or has cardinality $\frac{|\fkn|}{|\fk s_\lambda|}= \frac{|G|}{|S_\lambda|}$, so we have $\Ind_{L_\lambda}^{S_\lambda}(\theta_\lambda) = \sum_{\nu \in \cO} \theta_\nu$.  One finds using Lemma \ref{5.1} that $\cO$ is a union of two-sided $S_\lambda$-orbits in $(\fk s_\lambda)^*$, and so from (\ref{superchar-def}) we see that  $\Ind_{L_\lambda}^{S_\lambda}(\theta_\lambda) = \sum_{\nu \in \cO} \theta_\nu$ is a sum of supercharacters of $S_\lambda$ and (1) is immediate from Remark (iv) in Section \ref{alg}.

By Theorem \ref{andre} the distinct irreducible constituents of $\chi_\lambda$  are obtained by inducing the irreducible constituents of $\tau\overset{\mathrm{def}}=\Ind_{L_\lambda}^{S_\lambda}(\theta_\lambda)$.  Observe that if $X \in \fk s_\lambda$ then 
\[ \tau(1+X) = \left\{\barr{ll} 0,&\text{if }X \notin \fk l_\lambda, \\
 \tau(1) \cdot \theta\circ \lambda(X), &\text{if }X \in \fk l_\lambda.\earr\right.\]  Thus $g\mapsto \frac{\tau(g)}{\tau(1)}$ is the linear character $\theta_\lambda$ on $L_\lambda$ and $\ker \tau$ contains $K_\lambda$, so by standard theorems in character theory (see \cite[Lemma 2.2]{Isaacs}) every irreducible constituent  of $\tau$
 is of  the form $\psi \circ \pi$ for a distinct irreducible character $\psi$ of $S_\lambda / K_\lambda$.  % where $\pi : S_\lambda \to Q_\lambda$ is the surjective homomorphism 
% $1+X \mapsto 1+(X+\fk k_\lambda)$.  
 %
 If $\psi \in \Irr(S_\lambda / K_\lambda)$ then the map $g\mapsto \frac{\psi(g)}{\psi(1)}$ defines a linear character of the central subgroup $L_\lambda/K_\lambda$, and so the map $g\mapsto \frac{\psi\circ \pi(g)}{\psi(1)} $ likewise defines a linear character of $L_\lambda = \pi^{-1}(L_\lambda/K_\lambda)$.  Noting 
the formula for $\tau$, it follows that if $\psi \in \Irr(S_\lambda / K_\lambda)$ then  $ \langle \tau, \psi\circ \pi \rangle_{S_\lambda} \neq 0$ if and only if  $\frac{|S_\lambda|}{|G|} \theta_\lambda (z)=  \frac{\tau(z)}{\tau(1)} = \frac{\psi\circ \pi(z)}{\psi(1)}$ for all $z \in L_\lambda$.  
As $\psi\circ\pi$ becomes on induction to $G$ an irreducible character of degree $q^e$ if and only if $\psi(1) = \frac{|S_\lambda|}{|G|} q^e$, and since by Lemma \ref{5.1} we have $\frac{|S_\lambda|}{|G|} =\frac{|S_\lambda|/|L_\lambda|}{|G|/|L_\lambda|} = \frac{|G\lambda \cap \lambda G|}{|G\lambda|}$, part (2) follows.

In the situation of (3),  $\theta_\lambda$ restricts to a linear character of $S_\lambda$, so $\tau$ and $\tau \otimes \overline{\theta_\lambda}$ have the same number of constituents of a given degree.   Since $L_\lambda \vartriangleleft S_\lambda$ and $\tau \otimes \overline{\theta_\lambda} = \Ind_{L_\lambda}^{S_\lambda}(\one)$,  the irreducible constituents of $\tau$ of degree $d$ are in bijection  with the irreducible characters of degree $d$ of the quotient $S_\lambda /L_\lambda$, and (3) follows.
\end{proof}

\begin{example} 
Define %$\fkn = \fkt_{13}(2)$ and $G=\UT_{13}(2)$ and define 
$\lambda \in \fkt_{13}(2)^*$ by 
\[\lambda = e_{1,5}^* + e_{2,6}^* + e_{3,10}^* + e_{4,11}^* + e_{5,7}^* + e_{6,8}^* + e_{7,9}^* + e_{8,12}^* + e_{9,13}^*.\]  %Assume for a moment $q=2$. 
 The group $\UT_{13}(2)$ has exactly two irreducible characters with non-real values by \cite{IK05}, and it is known that they occur as constituents of degree $2^{16}$ in the supercharacter $\chi_\lambda$. We give constructive proofs of these facts in \cite{supp1}. %Proposition \ref{main-prop} in Section \ref{constructions} gives a constructive proof of these facts.
  The two-sided orbit of $\lambda$ has size $2^{39}$, which prohibits or at least discourages one from computing the number of irreducible constituents of $\chi_\lambda$ directly from Remark (iv) in Section \ref{alg}. The size of the set $\{ \nu \downarrow \fk s_\lambda : \nu \in \lambda \cdot \UT_{13}(2)\}$
by contrast is only $2^{16}$.  Using Theorem \ref{count} with a naive implementation of Burnside's lemma, one is therefore able to compute 
$|\Irr(\UT_{13}(2),\chi_\lambda)| = 98.$  
%Using  more sophisticated techniques in the style of the algorithms in \cite{E}, one can calculate that for an arbitrary prime power $q$, the supercharacter $\chi_\lambda$ of $\UT_n(q)$ has $N_{\lambda,e}(q)$ irreducible constituents of degree $q^e$, where
%\[N_{\lambda,e}(q)= \left\{\ba & 2(q-1)^4 + 7(q-1)^3 + 9(q-1)^2 + 5(q-1) + 1,&&\text{if $e=15$},\\
%&3(q-1)^5 + 13(q-1)^4 + 22(q-1)^3 + 16(q-1)^2 + 4(q-1),&&\text{if $e=16$}, \\
%&(q-1)^5 + 5(q-1)^4 + 7(q-1)^3 + 3(q-1)^2,&&\text{if $e=17$}, \\
%&0,&&\text{otherwise}.\ea\right.\]  In particular, $N_{\lambda,e}(q)$ is a polynomial in $q-1$ with integer coefficients.  
%We discuss in greater detail how to carry out these kinds of computations in \cite{supp2}.  
%%We shall see in Section \ref{last} that this is a characteristic if conjectural property of the constituent counts of all supercharacters of $\UT_n(q)$.
\end{example}

\subsection{From supercharacters $\chi_\lambda$ to the constituents $\xi_\lambda$}\label{inductive}

We are now prepared to define the characters $\xi_\lambda$.  Recall that the supercharacter $\chi_\lambda$ is defined by inducing the function $\theta_\lambda : g \mapsto \theta\circ \lambda(g-1)$ from the algebra subgroup $L_\lambda = 1+\fk l_\lambda$ to $G=1+\fkn$, where $\fk l_\lambda$ is the left kernel of the bilinear form $B_\lambda : (X,Y)\mapsto \lambda(XY)$ on $\fkn$. We define the character $\xi_\lambda$ in the same way, except with $\fk l_\lambda$ replaced by a larger subalgebra $\olfkl_\lambda$, obtained as the left kernel of $B_\lambda$ restricted to the terminal element in a descending chain of subspaces of $\fkn\times \fkn$.  

Here are the details.  For each $\lambda \in \fkn^*$, we define two sequences of subspaces $\fk l_\lambda^i, \fk s_\lambda^i \subset \fkn$ for $i\geq 0$ by the inductive formulas 
\[ \ba \fk l_\lambda ^0 &= 0, \\ %\\[-10pt]
\fk s_\lambda^0 &= \fkn, \ea \qquad \text{and}\qquad 
\ba \fk l_\lambda^{i+1} & = \left\{ X \in \fk s_\lambda^i : \lambda(XY) = 0 \text{ for all }Y \in \fk s_\lambda^i \right\}, \\ %\\[-10pt]
\fk s_\lambda^{i+1} & =\left \{ X \in \fk s_\lambda^i : \lambda(XY) = 0 \text{ for all }Y \in \fk l_\lambda^{i+1}\right \}. \ea\]
If $B_\lambda : \fkn\times \fkn \to \FF_q$ denotes the bilinear form $(X,Y) \mapsto \lambda(XY)$, then we may alternatively define the subspaces $\fk l_\lambda^i$, $\fk s_\lambda^i$ for $i>0$ by
\[ \ba \fk l_\lambda^i & =\text{the left kernel of the restriction of $B_\lambda$ to $\fk s_\lambda^{i-1}\times \fk s_{\lambda}^{i-1}$,} \\
\fk s_\lambda^i &=\text{the left kernel of the restriction of $B_\lambda$ to $\fk s_\lambda^{i-1} \times \fk l_\lambda^i$.}\ea\]
In particular, $\fk l_\lambda^1 = \fk l_\lambda$ and $\fk s_\lambda^1 = \fk s_\lambda$.%, and each $\fk l_\lambda^i$ and $\fk s_\lambda^i$ is the left kernel of the restriction of the bilinear form 

It follows by repeatedly applying Lemma \ref{5.1} and the observations preceding it that
we have an ascending and descending chain of subspaces
\be\label{chain} 0  =\fk l_\lambda^0 \subset  \fk l_\lambda^1 \subset \fk l_\lambda^2 \subset \cdots\subset \fk s_\lambda^2 \subset \fk s_\lambda^1  \subset \fk s_\lambda^0 =  \fkn\ee with the following properties:
\begin{enumerate}
\item[(a)] Each $\fk s_\lambda^{i+1}$ is a subalgebra of $\fk s_\lambda^i$.
\item[(b)] Each $\fk l_\lambda^{i+1}$ is a right ideal of $\fk s_\lambda^i$ and a two-sided ideal of $\fk s_\lambda^{i+1}$.
\item[(c)] If $\fk s_\lambda^{n-1} = \fk s_\lambda^{n}$ for some $d\geq 1$ then $\fk l_\lambda^{d+i} = \fk l_\lambda^{d}$ and $\fk s_\lambda^{d+i} = \fk s_\lambda^d$ for all $i\geq 0$.
\end{enumerate}
Let $d\geq 1$ be an integer such that (c) holds$-$by dimensional considerations, some such $d$ exists$-$and define the subalgebras $\olfkl_\lambda,\olfks_\lambda\subset\fkn$ and algebra subgroups $\oll_\lambda,\ols_\lambda\subset G$ by \[  
\olfkl_\lambda = \fk l_\lambda^{d},
\qquad
 \olfks_\lambda = \fk s_\lambda^{d}\qquad\text{and}
 \qquad 
 \oll_\lambda = 1+\olfkl_\lambda,
 \qquad
 \ols_\lambda = 1+\olfks_\lambda.\] %where $n\geq 1$ is any integer for which (c) holds.  Some such $n$ exists by dimensional considerations, and  
Observe that $\olfkl_\lambda$ and $\olfks_\lambda$ are just the terminal elements in the ascending and descending chains (\ref{chain}), and hence by Lemma \ref{5.1} the restriction $\lambda \downarrow \olfks_\lambda \in (\olfks_\lambda)^* $ is fully ramified. 

It is clear that the function $\theta_\lambda: G \to \CC$ restricts to a linear character of $\oll_\lambda$ since $\lambda(XY) =0$ for $X,Y \in \fk l_\lambda^i$ for all $i$.  
We may thus define a character of $G$ by
\[\xi_\lambda = \Ind_{\oll_\lambda}^G(\theta_\lambda).\]
This character is our principle object of study.  % in this section and the primary tool in our proofs  in Section \ref{appl}. 
 It is a 
%  The character $\xi_\lambda$ is then a
 possibly reducible constituent  with  degree $|G|/|\oll_\lambda|$  of the supercharacter $\chi_\lambda$ of $G$;  also, we have this immediate corollary of Theorem \ref{andre}:

\begin{corollary}\label{andre-cor}
Let $\fkn$ be a finite-dimensional nilpotent $\FF_q$-algebra and write $G=1+\fkn$.   If $\lambda \in \fkn^*$ then %the following hold:
%\begin{enumerate}
%\item[(1)] The function $\Ind_{\ol L_\lambda}^{\ol S_\lambda}(\theta_\lambda)$ is a fully ramified supercharacter of $\ol L_\lambda$.
%
%\item[(2)] T
the map \[ \barr{ccc} \Irr\(\ols_\lambda,\Ind_{\oll_\lambda}^{\ols_\lambda}(\theta_\lambda)\) & \to & \Irr(G,\xi_\lambda) \\ \psi & \mapsto & \Ind_{\ol S_\lambda}^{G}(\psi) \earr\] is a bijection.   

%\end{enumerate}
\end{corollary}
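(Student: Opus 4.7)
The plan is to iterate Theorem \ref{andre} along the descending chain $G = G_0 \supset G_1 \supset \cdots \supset G_d = \ols_\lambda$, where $G_i = 1+\fk s_\lambda^i$. For $0\leq i\leq d$ set $\zeta_i = \Ind_{\oll_\lambda}^{G_i}(\theta_\lambda)$, so that $\zeta_0 = \xi_\lambda$ and $\zeta_d = \Ind_{\oll_\lambda}^{\ols_\lambda}(\theta_\lambda)$. Transitivity of induction gives $\zeta_i = \Ind_{G_{i+1}}^{G_i}(\zeta_{i+1})$ for each $i<d$, and transitivity also shows that the map appearing in the corollary factors as the composition of the induction maps $\Irr(G_{i+1},\zeta_{i+1}) \to \Irr(G_i,\zeta_i)$ for $i = d-1, d-2, \ldots, 0$. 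It therefore suffices to show that each such step is a bijection.

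Fix $i$ and let $\mu_i \in (\fk s_\lambda^i)^*$ denote the restriction of $\lambda$ to $\fk s_\lambda^i$. A direct check from the inductive definitions of Section \ref{fk} shows that the subalgebras attached to $\mu_i$ by the construction applied inside $\fk s_\lambda^i$ coincide with $\fk l_\lambda^{i+1}$ and $\fk s_\lambda^{i+1}$. Writing $H_i = 1+\fk l_\lambda^{i+1}$ and applying Theorem \ref{andre} to the algebra group $G_i$ with functional $\mu_i$ therefore yields a bijection
\[ \Irr\bigl(G_{i+1},\, \Ind_{H_i}^{G_{i+1}}(\theta_\lambda)\bigr) \to \Irr\bigl(G_i,\, \Ind_{H_i}^{G_i}(\theta_\lambda)\bigr), \qquad \psi \mapsto \Ind_{G_{i+1}}^{G_i}(\psi). \]

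Since the subspaces $\fk l_\lambda^j$ ascend with $j$, we have $\oll_\lambda \supset H_i$, and hence $\zeta_j$ is a constituent of $\Ind_{H_i}^{G_j}(\theta_\lambda)$ for $j \in \{i, i+1\}$. It remains to verify that the bijection above restricts to a bijection between the constituents of $\zeta_{i+1}$ and those of $\zeta_i$. Writing $\zeta_{i+1} = \sum_\psi n_\psi \psi$ with $n_\psi \geq 0$ and $\psi$ ranging over $\Irr(G_{i+1}, \Ind_{H_i}^{G_{i+1}}(\theta_\lambda))$, transitivity gives $\zeta_i = \sum_\psi n_\psi \Ind_{G_{i+1}}^{G_i}(\psi)$. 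The summands $\Ind_{G_{i+1}}^{G_i}(\psi)$ are distinct irreducibles by the displayed bijection, so $n_\psi$ is also the multiplicity of $\Ind_{G_{i+1}}^{G_i}(\psi)$ in $\zeta_i$, and hence $\psi$ is a constituent of $\zeta_{i+1}$ if and only if $\Ind_{G_{i+1}}^{G_i}(\psi)$ is a constituent of $\zeta_i$.

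The only nontrivial obstacle is the bookkeeping identification in the second paragraph: one must check carefully that applying the construction of Section \ref{fk} to $\mu_i$ inside $\fk s_\lambda^i$ reproduces $\fk l_\lambda^{i+1}$ and $\fk s_\lambda^{i+1}$, but this is essentially the content of the inductive definition of the chain (\ref{chain}). Once this is in hand, Theorem \ref{andre} supplies the bijection at each level and the corollary follows formally from transitivity of induction together with the multiplicity argument above.
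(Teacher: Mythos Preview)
Your proof is correct and is exactly the argument the paper leaves implicit: the paper calls this an ``immediate corollary of Theorem~\ref{andre}'' after introducing the chain (\ref{chain}), and your iteration of Theorem~\ref{andre} along $G_0\supset G_1\supset\cdots\supset G_d$, together with the multiplicity bookkeeping showing that each step restricts to a bijection on constituents of $\zeta_{i+1}$ and $\zeta_i$, is precisely what ``immediate'' is hiding. The identification $\fk l_{\mu_i}=\fk l_\lambda^{i+1}$, $\fk s_{\mu_i}=\fk s_\lambda^{i+1}$ is indeed just the inductive definition, so there is no gap.
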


Observe that whereas in Theorem \ref{andre} the character $\Ind_{L_\lambda}^{S_\lambda} (\theta_\lambda)$ was \emph{not} necessarily a supercharacter of $S_\lambda$, here the character $\Ind_{\oll_\lambda}^{\ols_\lambda}(\theta_\lambda)$ is a fully-ramified supercharacter of $\ols_\lambda$ by construction. 
Thus the problem of enumerating the irreducible characters of an algebra is group is reduced to examining certain fully ramified supercharacters of the algebra subgroups $\ols_\lambda$.  Furthermore, this reduction is easily computable and it can tell us useful information about the characters of $G$ besides just their number and degrees.  In this sense the characters $\xi_\lambda$ offer a sort of middle ground between the algebra group character reduction described by Evseev in \cite{E}, which allows us to compute similar data but provides a much less tangible description of the characters involved, and the reduction process put forth by Boyarchenko in \cite{Boy}, which specifies several nice structural features of the irreducible representations of $G$ but is highly nonconstructive.

A less immediate corollary of Theorem \ref{andre} is the next proposition.

\begin{proposition}\label{less}
Let $\lambda,\lambda' \in \fkn^*$ and write $\mu = \lambda\downarrow \olfks_\lambda$ and $\mu' =\lambda'\downarrow \olfks_\lambda$.  Then $\mu' \in \mu \ols_\lambda$ if and only if $\lambda' \in \lambda \ols_\lambda$, and in this case the following hold:
\begin{enumerate}
\item[(1)] $\Ind_{\ols_\lambda}^G(\psi_{\mu'}) = \psi_{\lambda'}$, and consequently $\langle \xi_\lambda, \psi_{\lambda'} \rangle_G$ is a positive integer.
\item[(2)] $\psi_{\lambda'} \in \Irr(G)$ if and only if  $\psi_{\mu'} \in \Irr(\ols_\lambda)$.
\end{enumerate}

\end{proposition}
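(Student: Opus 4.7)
The plan is to handle the proposition in three stages: establish the equivalence $\lambda' \in \lambda\ols_\lambda \iff \mu' \in \mu\ols_\lambda$ first, then prove (1), and finally deduce (2) from (1) via Corollary \ref{andre-cor}.

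For the equivalence, I claim $\lambda\ols_\lambda$ equals the coset $\{\nu \in \fkn^* : \nu\downarrow \olfkl_\lambda = \lambda\downarrow \olfkl_\lambda\}$ in $\fkn^*$. The inclusion $\subseteq$ holds because, for $\nu = \lambda g$ with $g = 1+Y \in \ols_\lambda$ and $g^{-1} = 1-Y'$ with $Y' \in \olfks_\lambda$, we have $(\nu - \lambda)(X) = -\lambda(XY')$, which vanishes on $X \in \olfkl_\lambda$ by the defining property of $\olfkl_\lambda$ as the left kernel of $B_\lambda$ on $\olfks_\lambda \times \olfks_\lambda$. Equality then follows from a cardinality check using the stabilization $\fk s_\lambda^d = \fk s_\lambda^{d-1}$, which forces the image of the linear map $\olfks_\lambda \to \fkn^*$, $Z \mapsto (X \mapsto \lambda(XZ))$, to have dimension exactly $\dim \fkn - \dim \olfkl_\lambda$. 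Applying the same argument in $\ols_\lambda$ acting on $\olfks_\lambda^*$ yields $\mu\ols_\lambda = \{\tau \in \olfks_\lambda^* : \tau\downarrow \olfkl_\lambda = \mu\downarrow \olfkl_\lambda\}$, and the stated equivalence drops out by comparing these two cosets through the restriction map $\fkn^* \to \olfks_\lambda^*$.

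For (1), the plan is to expand both sides in the orthonormal basis $\{\theta_\nu : \nu \in \fkn^*\}$. A direct calculation from $\xi_\lambda = \Ind_{\oll_\lambda}^G(\theta_\lambda)$ gives $\xi_\lambda = \sum_{\nu : \nu\downarrow \olfkl_\lambda = \lambda\downarrow \olfkl_\lambda}\theta_\nu$. Computing $\langle \Ind_{\ols_\lambda}^G(\psi_{\mu'}), \theta_\nu\rangle_G$ through the induction formula and the orthonormality of $\{\theta_\sigma\}_{\sigma \in \olfks_\lambda^*}$ on $\ols_\lambda$, the coefficient of $\theta_\nu$ works out to $|{\mu'}^{\ols_\lambda}|^{-1/2} \cdot |\{\rho \in \nu^G : \rho\downarrow \olfks_\lambda \in {\mu'}^{\ols_\lambda}\}|/|\nu^G|$. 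Matching this expansion against $\psi_{\lambda'} = |{\lambda'}^G|^{-1/2}\sum_{\nu \in {\lambda'}^G}\theta_\nu$ reduces (1) to the preimage identity $\{\rho \in \fkn^* : \rho\downarrow \olfks_\lambda \in {\mu'}^{\ols_\lambda}\} = {\lambda'}^G$, with the orbit-size equality $|{\lambda'}^G| = [G:\ols_\lambda]^2 \cdot |{\mu'}^{\ols_\lambda}|$ then falling out as a cardinality byproduct. Establishing this preimage identity is the main obstacle: the restriction map $\fkn^* \to \olfks_\lambda^*$ is only $\ols_\lambda$-equivariant, not $G$-equivariant, so the argument must leverage the full chain $\fk s_\lambda^i$ and the fully-ramified property of $\mu$ in $\ols_\lambda$, presumably by iterating Theorem \ref{andre} one layer at a time along the defining chain.

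For (2), Corollary \ref{andre-cor} extends linearly to a bijection between the $\CC$-spans of $\Irr(\ols_\lambda, \chi_\mu)$ and $\Irr(G, \xi_\lambda)$ via induction. Since $\mu\ols_\lambda$ is coadjoint $\ols_\lambda$-stable by the fully-ramified property, we have ${\mu'}^{\ols_\lambda} \subseteq \mu\ols_\lambda$, so Remark (iv) applied in $\ols_\lambda$ places $\psi_{\mu'}$ in the former $\CC$-span; by (1), its induction $\psi_{\lambda'}$ lies in the latter $\CC$-span. The bijection preserves the property of being a single irreducible basis element, so $\psi_{\mu'} \in \Irr(\ols_\lambda)$ if and only if $\psi_{\lambda'} \in \Irr(G)$, establishing (2).
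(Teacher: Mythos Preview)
Your plan for part (2) matches the paper's, but the equivalence and part (1) diverge, and part (1) contains an actual error rather than just a gap.

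\textbf{The equivalence.} The paper takes a shorter route: it shows that $\mu'=\mu$ (equality, not merely same orbit) if and only if $\lambda'\in\lambda\oll_\lambda$, by identifying the annihilator of $\olfks_\lambda$ in $\fkn^*$ with $\{X\mapsto\lambda(XY):Y\in\olfkl_\lambda\}=\lambda\oll_\lambda-\lambda$. Since $\oll_\lambda\subset\ols_\lambda$, the stated equivalence follows at once. Your alternative characterization $\lambda\ols_\lambda=\{\nu:\nu\downarrow\olfkl_\lambda=\lambda\downarrow\olfkl_\lambda\}$ needs the dimension identity $\dim\olfks_\lambda-\dim(\olfks_\lambda\cap\fk r_\lambda)=\dim\fkn-\dim\olfkl_\lambda$, which you do not establish; the stabilization $\fk s_\lambda^d=\fk s_\lambda^{d+1}$ controls the left kernel of $B_\lambda$ on $\olfks_\lambda\times\olfkl_\lambda$, not the right kernel on $\fkn\times\olfks_\lambda$.

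\textbf{Part (1).} Your reduction is incorrect: the ``preimage identity'' $\{\rho\in\fkn^*:\rho\downarrow\olfks_\lambda\in{\mu'}^{\ols_\lambda}\}={\lambda'}^G$ is \emph{false} whenever $\ols_\lambda\neq G$. The left side has cardinality $[G:\ols_\lambda]\cdot|{\mu'}^{\ols_\lambda}|$ (restriction $\fkn^*\to\olfks_\lambda^*$ has fibers of that index), whereas your own orbit-size equality gives $|{\lambda'}^G|=[G:\ols_\lambda]^2\cdot|{\mu'}^{\ols_\lambda}|$, so the two sets differ in size by a factor $[G:\ols_\lambda]$. If you redo your coefficient computation carefully, what is actually needed is the \emph{containment} $\{\rho:\rho\downarrow\olfks_\lambda\in{\mu'}^{\ols_\lambda}\}\subset{\lambda'}^G$; the cardinalities then match automatically. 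Proving this containment is still nontrivial and is not obviously accessible by ``iterating Theorem~\ref{andre},'' which concerns irreducible characters rather than coadjoint orbits.

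The paper sidesteps this entirely with an indirect argument. Write $\psi_{\mu'}=\sum_i c_i\vartheta_i$ with $\vartheta_i\in\Irr(\ols_\lambda,\chi_\mu)$ using Remark~(iv). Corollary~\ref{andre-cor} says induction sends the $\vartheta_i$ to distinct irreducibles of $G$, so
\[
\bigl\langle \Ind_{\ols_\lambda}^G(\psi_{\mu'}),\,\Ind_{\ols_\lambda}^G(\psi_{\mu'})\bigr\rangle_G
=\sum_i|c_i|^2
=\langle\psi_{\mu'},\psi_{\mu'}\rangle_{\ols_\lambda}=1.
\]
Separately, \cite[Theorem 5.5]{AndreAdjoint} (cf.\ Remark~(iii)) gives that $\Ind_{\ols_\lambda}^G(\psi_{\mu'})$ equals $\psi_{\lambda'}$ plus a nonnegative integer combination of other Kirillov functions. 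The norm-one condition then forces $\Ind_{\ols_\lambda}^G(\psi_{\mu'})=\psi_{\lambda'}$. This is short and uses only black-box inputs already recorded in Section~\ref{alg}; your direct computation would require an independent orbit-level argument that is essentially as deep as the proposition itself.
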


\begin{proof}  Abbreviate by writing $\fk l = \olfkl_\lambda$ and $\fk s= \olfks_\lambda$ and $L = \oll_\lambda$ and $S = \ols_\lambda$.  
By construction, $\fk s \subset \ker(\lambda'-\lambda)$ if and only if $\lambda'-\lambda\in \fkn^*$ is a map of the form $X \mapsto \lambda(XY)$ for some $Y \in \fk l$, and in this case $\lambda'-\lambda = \lambda y -\lambda$ for $y =(1+Y)^{-1}\in L$.   Thus $\mu' = \mu$ if and only if $\lambda' \in \lambda L$, so $\mu' \in \mu S$ if and only if $\lambda' \in \lambda S$ since $L \subset S$.  

Assume $\mu' \in \mu S$ and let $\vartheta_1,\dots,\vartheta_m$ be the distinct irreducible constituents of the supercharacter $\chi_\mu$ of $S$.  %Since $\chi_\mu$ is fully ramified, it is equal to the  sum of Kirillov functions $\chi_\mu = \sum_\nu \psi_\nu(1)\psi_\nu$ with $\nu$ ranging over a set of representatives of the distinct coadjoint orbits in $\mu \ols_\lambda\subset (\olfks_\lambda)^* $.  
We have $ \chi_\mu= \chi_{\mu'}$ so by Remark (iv) in Section \ref{alg} there are complex numbers $c_i$ such that $\psi_{\mu'} = \sum_{i=1}^m c_i \vartheta_i$.  The preceding corollary shows that induction from $S$ to $G$ defines a bijection from the irreducible constituents of $\chi_\mu$ to those of $\xi_\lambda$, so we have 
\[\barr{c} 1 =  \langle \psi_{\mu'}, \psi_{\mu'} \rangle_{S} =  \sum_{i=1}^m |c_i|^2 =
\left \langle \Ind_{S}^G(\psi_{\mu'}), \Ind_{S}^G(\psi_{\mu'})\right \rangle_G.
\earr\]
 By \cite[Theorem 5.5]{AndreAdjoint}, $\Ind_{S}^G(\psi_{\mu'})$ is equal to $\psi_{\lambda'}$ plus a linear combination of Kirillov functions with nonnegative integer coefficients; given this, the preceding equation implies that $\Ind_{S}^G(\psi_{\mu'}) = \psi_{\lambda'}$. 
Corollary \ref{andre-cor} likewise implies that $ \langle \chi_\mu,\psi_\mu'\rangle_S = \langle \xi_\lambda, \Ind_{S}^G(\psi_{\mu'}) \rangle_G$, and \cite[Theorem 5.7]{AndreAdjoint} shows that first of these numbers is a positive integer.  This proves (1).  For (2), we note that  $\psi_{\lambda'} = \sum_{i=1}^m c_i\hspace{0.5mm} \Ind_{S}^G (\vartheta_i)$ is %a linear combination of irreducible characters, so $\psi_\lambda$ is 
 a character of $G$ if and only if every $c_i$ is a nonnegative integer, which occurs if and only if $\psi_{\mu'}$ is a character of $S$. 
 \end{proof}

For each $\lambda \in \fkn^*$, let 
\[ \Xi_\lambda =\left\{ g\lambda sg^{-1} : g \in G,\ s \in \ols_\lambda\right\} \subset \fkn^*.\]  
This is certainly a union of coadjoint orbits, and it is evident from the preceding proposition that a coadjoint orbit $\cO \subset \fkn^*$ has $\cO \subset \Xi_\lambda$ if and only if $\cO$ contains an element whose restriction to $\olfks_\lambda$ lies in the orbit $\mu \ols_\lambda$ where $\mu = \lambda \downarrow \olfks_\lambda$.  

  Noting that $\olfks_\lambda \subset \fk s_\lambda^i$ and that $\fk l_\lambda^i$ is a right ideal of $\olfks_\lambda$ for all $i$, it is not difficult to show by inducting upward on the superscripts in the chain (\ref{chain}) that
\begin{enumerate}
\item[(a)] If $s \in \ols_\lambda$ then $\ols_{\lambda s} = \ols_\lambda$.
\item[(b)] If $g \in G$ then $\ols_{g\lambda g^{-1}} = g\cdot  \ols_\lambda \cdot g^{-1}$.
\end{enumerate}
Consequently, $\Xi_\mu = \Xi_\lambda$ if $\mu \in \Xi_\lambda$ and the sets $\Xi_\lambda$ thus partition $\fkn^*$.  We now may state a theorem enumerating the the various significant structural properties of the characters $\xi_\lambda$.

\begin{theorem}\label{structural}
Let $\fkn$ be a finite-dimensional nilpotent $\FF_q$-algebra and write $G=1+\fkn$. If $\lambda,\lambda' \in \fkn^*$, then 
\begin{enumerate}
\item[(1)]  
$\displaystyle \xi_\lambda =\frac{|\ols_\lambda|}{\left|G\right|} \sum_{\nu \in \Xi_\lambda} \theta_\nu % = \frac{|\ols_\lambda|}{\left|G\right|} \sum_\nu \psi_\nu(1) \psi_\nu
$ and $\displaystyle |\Xi_\lambda| = \frac{|G|^2}{| \oll_\lambda|| \ols_\lambda|}. $    %where the sum is over a set of representatives $\nu \in \fkn^*$ of the distinct coadjoint orbits in $\Xi_\lambda$.  %Also, $|\Xi_\lambda| = \frac{|G|^2}{| \oll_\lambda|| \ols_\lambda|}.$  %, and $c_\nu = \sqrt{\left| (\nu \downarrow \olfks_\lambda)^{\ols_\lambda}\right|}$.
\item[(2)] $\xi_\lambda = \xi_{\lambda'}$ if and only if $\lambda' \in \Xi_\lambda$, and if $\lambda ' \notin \Xi_\lambda$ then $\xi_\lambda$ and $\xi_{\lambda'}$ share no irreducible constituents.   In particular, 
\[ \left\langle \xi_\lambda, \xi_{\lambda'} \right\rangle_G = \begin{cases} |\ols_\lambda| / |\oll_\lambda|,&\text{if }\lambda' \in \Xi_\lambda, \\
0,&\text{otherwise}.\end{cases}\] 

\item[(3)] If the character $\xi_\lambda$ is irreducible then $\xi_\lambda = \psi_\lambda$ is a Kirillov function.

\item[(4)] The irreducible constituents of the characters $\{ \xi_\lambda : \lambda \in \fkn^*\}$ partition $\Irr(G)$, and the character $\rho_G$ of the regular representation of $G$ decomposes as 
\[ \rho_G = \sum_\nu \frac{|G|}{|\ols_\nu|} \xi_\nu\] where the sum is over any set of $\nu \in \fkn^*$ for which $\fkn^* = \bigcup_\nu \Xi_\nu$ is a disjoint union.

%\item[(4)]  The number of irreducible constituents of $\xi_\lambda$ is  the number of coadjoint $G$-orbits in $\Xi_\lambda$, which is equal to the number of coadjoint $\ols_\lambda$-orbits in $\mu \ols_\lambda$ where $\mu = \lambda \downarrow \olfks_\lambda$.

\end{enumerate}
\end{theorem}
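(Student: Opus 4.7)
The main work is in part (1); parts (2)--(4) follow relatively quickly once (1) is established.

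For (1), I first establish the key identity $\lambda \ols_\lambda = \lambda + (\olfkl_\lambda)^\perp$, where $(\olfkl_\lambda)^\perp = \{\nu \in \fkn^* : \nu|_{\olfkl_\lambda} = 0\}$. The inclusion $\subseteq$ follows from the chain definition: since $\lambda(XW) = 0$ whenever $X \in \olfkl_\lambda$ and $W \in \olfks_\lambda$, every $\lambda s - \lambda$ with $s \in \ols_\lambda$ annihilates $\olfkl_\lambda$. For the reverse inclusion I compare cardinalities. The linear map $\olfks_\lambda \to \fkn^*$ sending $W \mapsto (X \mapsto \lambda(XW))$ has image $\lambda \ols_\lambda - \lambda$ and kernel $\olfks_\lambda \cap \fk r_\lambda$, where $\fk r_\lambda \subseteq \fkn$ is the right kernel of $B_\lambda$ on $\fkn \times \fkn$; hence $|\lambda \ols_\lambda| = |\olfks_\lambda|/|\olfks_\lambda \cap \fk r_\lambda|$. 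A short inductive descent along the chain $\{\fk s_\lambda^i\}$ shows that $B_\lambda$ restricted to $\fkn \times \olfks_\lambda$ has left kernel exactly $\olfkl_\lambda$, while its right kernel is visibly $\olfks_\lambda \cap \fk r_\lambda$; the standard left-right rank identity for a bilinear form then gives $|\olfks_\lambda||\olfkl_\lambda| = |\fkn||\olfks_\lambda \cap \fk r_\lambda|$. Combining, $|\lambda \ols_\lambda| = |\fkn|/|\olfkl_\lambda| = |(\olfkl_\lambda)^\perp|$, and equality follows.

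Given the key identity, Fourier orthogonality on the additive group $\fkn$ recasts the zero-extension of $\theta_\lambda$ from $\oll_\lambda$ to $G$ as
\[[s \in \oll_\lambda]\,\theta_\lambda(s) = \frac{|\oll_\lambda|}{|G|}\sum_{\alpha \in \lambda \ols_\lambda}\theta_\alpha(s).\]
Substituting this into the Frobenius formula $\xi_\lambda(g) = \frac{1}{|\oll_\lambda|}\sum_{x \in G}[xgx^{-1} \in \oll_\lambda]\,\theta_\lambda(xgx^{-1})$, using $\theta_\alpha(xgx^{-1}) = \theta_{\alpha^x}(g)$, and grouping the resulting double sum by coadjoint orbits yields
\[\xi_\lambda = \sum_{\eta \in \Xi_\lambda}\frac{|\eta^G \cap \lambda \ols_\lambda|}{|\eta^G|}\,\theta_\eta.\]
To show the coefficient is the constant $|\ols_\lambda|/|G|$, I invoke Corollary \ref{andre-cor}: induction from $\ols_\lambda$ sends distinct irreducibles in $\Irr(\ols_\lambda,\tau)$ to distinct irreducibles in $\Irr(G, \xi_\lambda)$, so
\[\langle \xi_\lambda, \xi_\lambda\rangle_G = \langle \tau, \tau\rangle_{\ols_\lambda} = \langle \chi_\mu, \chi_\mu\rangle_{\ols_\lambda} = |\mu \ols_\lambda| = |\ols_\lambda|/|\oll_\lambda|,\]
using full ramification of $\mu$ and Remark (i) of Section \ref{alg}. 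Cauchy--Schwarz applied to the sequences $(|\cO \cap \lambda\ols_\lambda|/\sqrt{|\cO|})_\cO$ and $(\sqrt{|\cO|})_\cO$, indexed by coadjoint orbits $\cO \subseteq \Xi_\lambda$, gives $|\lambda \ols_\lambda|^2 \leq \langle\xi_\lambda,\xi_\lambda\rangle_G \cdot |\Xi_\lambda|$; substituting the known values $|\lambda \ols_\lambda| = |G|/|\oll_\lambda|$ and $\langle\xi_\lambda,\xi_\lambda\rangle_G = |\ols_\lambda|/|\oll_\lambda|$ forces equality, so $|\cO \cap \lambda\ols_\lambda|/|\cO|$ is constant across $\cO \subseteq \Xi_\lambda$. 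The value $|\ols_\lambda|/|G|$ and the formula $|\Xi_\lambda| = |G|^2/(|\oll_\lambda||\ols_\lambda|)$ then drop out of the degree identity $\xi_\lambda(1) = |G|/|\oll_\lambda|$.

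With (1) in hand, the formula for $\xi_\lambda$ shows $\xi_\lambda = \xi_{\lambda'}$ if and only if $\Xi_\lambda = \Xi_{\lambda'}$, equivalently $\lambda' \in \Xi_\lambda$, and the inner product in (2) is immediate from orthonormality of the $\theta_\eta$; the no-shared-constituents claim follows because a vanishing inner product between two non-negative integer combinations of irreducibles forces disjoint support. For (3), if $\xi_\lambda$ is irreducible then Corollary \ref{andre-cor} makes $\tau = \chi_\mu$ irreducible in $\ols_\lambda$, so $\chi_\mu = \psi_\mu$ by Remark (i), and Proposition \ref{less}(1) applied with $\lambda' = \lambda$ gives $\xi_\lambda = \Ind_{\ols_\lambda}^G(\psi_\mu) = \psi_\lambda$. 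For (4), partitioning $\fkn^* = \bigsqcup_\nu \Xi_\nu$ and applying (1) in each block converts $\rho_G = \sum_{\eta \in \fkn^*}\theta_\eta$ (Remark (ii) of Section \ref{alg}) into the asserted decomposition; comparison with $\rho_G = \sum_{\psi \in \Irr(G)}\psi(1)\psi$ shows every irreducible character occurs in some $\xi_\nu$, and (2) guarantees pairwise disjointness. The principal obstacle in this plan is the bilinear-form rank calculation underlying the key identity of (1); once that is available, the Cauchy--Schwarz squeeze and the remaining invocations of Proposition \ref{less} and Corollary \ref{andre-cor} are essentially routine.
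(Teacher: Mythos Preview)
Your treatment of parts (2)--(4) is fine and essentially matches the paper's. The problem is in part (1), specifically at the Cauchy--Schwarz ``squeeze.'' From
\[
|\lambda\ols_\lambda|^2 \;\le\; \langle\xi_\lambda,\xi_\lambda\rangle_G \cdot |\Xi_\lambda|
\]
and the known values $|\lambda\ols_\lambda|=|G|/|\oll_\lambda|$, $\langle\xi_\lambda,\xi_\lambda\rangle_G=|\ols_\lambda|/|\oll_\lambda|$, you obtain only the \emph{lower} bound $|\Xi_\lambda|\ge |G|^2/(|\oll_\lambda|\,|\ols_\lambda|)$. Nothing you have written gives the matching upper bound, so you cannot conclude equality in Cauchy--Schwarz, and hence cannot conclude that the ratio $|\cO\cap\lambda\ols_\lambda|/|\cO|$ is constant across orbits $\cO\subset\Xi_\lambda$. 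The phrase ``forces equality'' is doing work it is not entitled to do: with three quantities $A\le BC$, knowing $A$ and $B$ does not determine $C$.

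The paper avoids this issue by a more direct route. Writing $\mu=\lambda\downarrow\olfks_\lambda$ and using that $\chi_\mu$ is fully ramified, one has $\chi_\mu=\sum_{\nu}\psi_\nu(1)\psi_\nu$, the sum over representatives of coadjoint $\ols_\lambda$-orbits in $\mu\ols_\lambda$. Proposition~\ref{less}(1) (which you already invoke for part (3)) says $\Ind_{\ols_\lambda}^G(\psi_\nu)=\psi_{\wt\nu}$ for a lift $\wt\nu\in\fkn^*$, and these lifts hit each coadjoint $G$-orbit in $\Xi_\lambda$ exactly once. Inducing the displayed decomposition of $\chi_\mu$ and using $\psi_{\wt\nu}(1)=\tfrac{|G|}{|\ols_\lambda|}\psi_\nu(1)$ gives immediately
\[
\xi_\lambda=\Ind_{\ols_\lambda}^G(\chi_\mu)=\frac{|\ols_\lambda|}{|G|}\sum_{\nu}\psi_{\wt\nu}(1)\psi_{\wt\nu}=\frac{|\ols_\lambda|}{|G|}\sum_{\eta\in\Xi_\lambda}\theta_\eta,
\]
with the constant coefficient built in and no inequality needed. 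Your key identity $\lambda\ols_\lambda=\lambda+(\olfkl_\lambda)^\perp$ and the Frobenius computation leading to $\xi_\lambda=\sum_\eta\frac{|\eta^G\cap\lambda\ols_\lambda|}{|\eta^G|}\theta_\eta$ are correct and interesting, but to finish along those lines you would still need either an independent upper bound on $|\Xi_\lambda|$ or a direct argument that each orbit meets $\lambda\ols_\lambda$ in a constant proportion; Proposition~\ref{less} is precisely the tool that supplies this.
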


\begin{remarks}
\begin{enumerate}

\item[]

\item[(i)]
This result extends Theorem 4.9 in \cite{AndreAdjoint}, which asserts the existence of a set of characters $\xi_\lambda$ whose constituents partition $\Irr(G)$ and which are induced from fully ramified supercharacters of algebra subgroups, but does not provide a way of computing various data attached to them.

\item[(ii)] While the characters $\{\xi_\lambda\}$ have several nice properties in common with the supercharacters $\{ \chi_\lambda\}$ of $G$ (e.g., they are characters with disjoint constituents; they have a formula given by a sum of functions $\theta_\nu$ over a kind of orbit; they are Kirillov functions if irreducible), they do \emph{not} form a supercharacter theory in the sense of Diaconis and Isaacs \cite{DI}.  Notably, the subspace of class functions spanned by $\{\xi_\lambda\}$ is not closed under tensor products, as one can check by examining the algebra group of matrices $g \in \UT_6(2)$ with $g_{1,2} = g_{5,6}$.  Furthermore, if $H =1+\h$ is an algebra subgroup of $G=1+\fkn$ then $\Res_H^G(\xi_\lambda)$ for $\lambda \in \fkn^*$ (respectively, $\Ind_H^G(\xi_\mu)$ for $\mu \in \h^*$) may fail to  be a linear combination of the characters $\{ \xi_\mu : \mu \in \h^*\}$ (respectively, the characters  $\{ \xi_\lambda :\lambda \in \fkn^*\}$).

\end{enumerate}
\end{remarks}

\begin{proof}
Abbreviate by writing $\fk s = \olfks_\lambda$ and $S = \ols_\lambda$ and $L = \oll_\lambda$.
Let $\mu = \lambda \downarrow \fk s$ and recall that $\chi_\mu$ is a fully ramified supercharacter of $S$, and therefore equal to the sum $\sum_{\nu \in \cR} \psi_{\nu}(1) \psi_{\nu}$  where $\cR \subset \fk s^*$ is a set of representatives of the distinct coadjoint orbits in $\mu S$.  For each $\nu \in \fk s^*$ choose $\wt\nu  \in \fkn^*$ with $\wt\nu\downarrow \fk s = \nu$.  The preceding proposition asserts $\Ind_S^G(\psi_{\nu}) = \psi_{\wt\nu}$ for each $\nu \in \cR$, and this implies that $\psi_{\wt\nu}(1) = \frac{|G|}{|S|} \psi_{\nu}(1)$.  Noting our remarks following the definition of $\Xi_\lambda$, one sees that each coadjoint orbit in $\Xi_\lambda$ contains $\wt\nu$ for a unique $\nu \in \cR$, and conversely that $\wt\nu \in \Xi_\lambda$ for all $\nu \in \cR$.    
From these observations, we deduce that
\[ \xi_\lambda = \Ind_{S}^G(\chi_\mu) = \frac{|S|}{|G|}\sum_{\nu  \in \cR} \psi_{\wt\nu}(1) \psi_{\wt\nu} = \frac{|S|}{|G|} \sum_{\nu \in \Xi_\lambda} \theta_\nu,\] and  this formula gives $\frac{|G|}{|L|} = \xi_\lambda(1) = \frac{|S|}{|G|} |\Xi_\lambda|$, which proves (1).
Since the functions $\{\theta_\lambda : \lambda \in \fkn^*\}$ are orthonormal with respect to $\langle\cdot,\cdot\rangle_G$, our formula for $\langle \xi_\lambda,\xi_{\lambda'}\rangle_G$ is immediate from (1).  If $L = S$ then $\chi_\mu = \psi_\mu$ is a linear character, whence $\xi_\lambda = \psi_\lambda$ by the preceding proposition.  Finally,  (4) holds because $\rho_G = \sum_{\nu \in \fkn^*} \theta_\nu$ and the sets $\Xi_\nu$ partition $\fkn^*$.%, and (4) follows by invoking Corollaries \ref{count} and \ref{andre-cor} alongside our observation in the preceding paragraph that the number of coadjoint $G$-orbits in $\Xi_\lambda$ is the cardinality of $\cR$.   
\end{proof}

We observed in the preceding proof that the number of coadjoint $G$-orbits in $\Xi_\lambda$ is the number coadjoint $\ols_\lambda$-orbits in $\mu \ols_\lambda$.  Therefore  invoking Theorem \ref{count} and Corollary \ref{andre-cor} provides the following result.

\begin{corollary} The number of irreducible constituents of $\xi_\lambda$ for $\lambda \in \fkn^*$ is equal to the number of coadjoint $G$-orbits in $\Xi_\lambda$, and also to the number of coadjoint $\ols_\lambda$-orbits in $\mu \ols_\lambda$ where $\mu = \lambda \downarrow \olfks_\lambda$.
\end{corollary}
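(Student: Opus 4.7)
The plan is to chain together Corollary \ref{andre-cor}, Theorem \ref{count}(1), and a bijection between orbits that is already implicit in the proof of Theorem \ref{structural}. Throughout, write $\fk s = \olfks_\lambda$, $S = \ols_\lambda$, $L = \oll_\lambda$, and $\mu = \lambda \downarrow \fk s$.

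By construction $\mu \in \fk s^*$ is fully ramified in $S$, so Lemma \ref{5.1} identifies $L$ as the left stabilizer of $\mu$ in $S$, and $\Ind_L^S(\theta_\lambda)$ coincides with the fully ramified supercharacter $\chi_\mu$ of $S$. Corollary \ref{andre-cor} therefore yields $|\Irr(G,\xi_\lambda)| = |\Irr(S,\chi_\mu)|$. Full ramification means $S\mu = \mu S$, so the analogue of the subalgebra $\fk s_\lambda$ attached to $\mu$ inside $\fk s$ (in the sense of Section \ref{fk}, applied to $\mu \in \fk s^*$ within the ambient group $S$) is all of $\fk s$. The set $\{\nu \downarrow \fk s : \nu \in \mu S\}$ appearing in the hypothesis of Theorem \ref{count}(1) therefore reduces to $\mu S$ itself, and that theorem identifies $|\Irr(S,\chi_\mu)|$ with the number of coadjoint $S$-orbits in $\mu S$. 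This proves the second stated equality.

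For the first equality it suffices to exhibit a bijection between coadjoint $S$-orbits in $\mu S$ and coadjoint $G$-orbits in $\Xi_\lambda$. Choose any set $\cR \subset \fk s^*$ of representatives of the coadjoint $S$-orbits in $\mu S$, and for each $\nu \in \cR$ pick an extension $\wt\nu \in \fkn^*$ with $\wt\nu \downarrow \fk s = \nu$. Every such $\wt\nu$ lies in $\Xi_\lambda$, and conversely every element of $\Xi_\lambda$ restricts to an element of $\mu S$ by properties (a) and (b) of $\ols_\lambda$ listed just before Theorem \ref{structural}. The main point is that two extensions $\wt\nu_1, \wt\nu_2$ lie in a common coadjoint $G$-orbit if and only if their restrictions $\nu_1, \nu_2$ lie in a common coadjoint $S$-orbit of $\mu S$; this is essentially what Proposition \ref{less} provides, and it yields the required bijection.

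The principal subtlety is the equivalence used in the last step, namely that coadjoint $G$-equivalence descends to coadjoint $S$-equivalence on restrictions. This amounts to recollecting observations already made: Proposition \ref{less} together with properties (a) and (b) of $\ols_\lambda$ supplies both directions. Thus no substantial new computation is required beyond pulling together what was established in the proof of Theorem \ref{structural}.
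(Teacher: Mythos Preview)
Your proof is correct and follows essentially the same route as the paper, combining Corollary \ref{andre-cor} with Theorem \ref{count}(1) applied to the fully ramified $\mu$, together with the orbit bijection already established in the proof of Theorem \ref{structural} via Proposition \ref{less}. One small imprecision: it is not true that \emph{every} element of $\Xi_\lambda$ restricts to an element of $\mu \ols_\lambda$ (for general $g$ the restriction of $g\lambda s g^{-1}$ need not lie there); what is true, and all that is needed, is that every coadjoint $G$-orbit in $\Xi_\lambda$ \emph{contains} some element (namely one of the form $\lambda s$) whose restriction lies in $\mu \ols_\lambda$.
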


There are of course analogues for the other parts of Theorem \ref{count} whose formulation we leave to the reader.

%The induced character $\Ind_{\oll_\lambda}^{\ols_\lambda}(\theta_\lambda) $ is a fully ramified supercharacter of the algebra group $\ols_\lambda$, corresponding to the linear map $\lambda \downarrow \olfks_\lambda \in (\olfks_\lambda)^*$.  Thus the problem of understanding the irreducible characters of algebra groups is in some sense reduced to that of describing  the constituents of fully ramified supercharacters.   
%
%Here are a few things we can say about the fully ramified supercharacters of an algebra group:
%
%\begin{enumerate}
%\item[(1)]
%
%\item[(2)]
%
%\end{enumerate}
%
%
%\begin{problem}
%Find general methods for decomposing fully ramified supercharacters when $\fkn^p \neq 0$.
%\end{problem}

\section{Well-induced characters and Kirillov functions}

It happens that $\xi_\lambda$ is irreducible if and only if $\xi_\lambda = \psi_\lambda = \logpsi_\lambda$. %, and that the number of irreducible constituents of $\xi_\lambda$ is equal to the number of coadjoint orbits in $\Xi_\lambda$.
This fact manifests a more general phenomenon relating Kirillov functions and induced linear characters, which we devote this section to discussing.  %The statement of this will require a slight digression.

\subsection{Inflation from quotients by algebra subgroups}\label{infl-sec}

\def\q{\mathfrak{q}}

We very briefly discuss  the effect of inflation on the functions $\psi_\lambda$, $\logpsi_\lambda$, $\chi_\lambda$, $\xi_\lambda$.  
Suppose $\fkn$ is a nilpotent $\FF_q$-algebra with a two-sided ideal $\h$.  Let $\q=\fkn /\h$ be the quotient algebra, and write $G = 1+\fkn$ and $Q = 1+\q$.  Also, let $\wt \pi : \fkn \to \q$ be the quotient map, and define $\pi :G\to Q$ by $\pi(1+X) = 1+\wt\pi(X)$; both of these are surjective homomorphisms, of algebras and groups, respectively.
The following result is really very elementary, but useful to have written down.

\begin{observation} \label{inflation}  If $\lambda \in \fkn^*$ has $\ker \lambda \supset \h$, then there exists a unique $\mu \in \q^*$ with $\lambda = \mu\circ \wt\pi$, and %if $\cF$ is any polynomial map of the form (\ref{polynomial}) then
\[\psi_\lambda = \psi_{\mu} \circ \pi,
\qquad
\logpsi_\lambda = \logpsi_{\mu} \circ \pi,
\qquad 
\chi_\lambda = \chi_{\mu } \circ\pi,
\qquad\text{and}
\qquad 
\xi_\lambda = \xi_{\mu} \circ \pi.\] Furthermore, $\oll_\lambda = \pi^{-1}(\oll_{\mu})$ and $\ols_\lambda = \pi^{-1}(\ols_\mu) $.
\end{observation}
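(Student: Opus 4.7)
The plan is to carry out four mechanical verifications.  First, since $\ker \lambda \supset \h$, elementary linear algebra produces a unique $\mu \in \q^*$ with $\lambda = \mu \circ \wt\pi$, and pullback by $\wt\pi$ defines an $\FF_q$-linear bijection $\wt\pi^* : \q^* \to \{\nu \in \fkn^* : \nu|_\h = 0\}$.  Because $\h$ is a two-sided ideal, the latter set is stable under the left, right, and coadjoint $G$-actions on $\fkn^*$ (in every case a factor landing in $\h$ stays in $\h$).  Moreover, on this subspace the $G$-action factors through $\pi: G \to Q$, since $\wt\pi$ is an algebra homomorphism and thus intertwines multiplication and conjugation by $g\in G$ with multiplication and conjugation by $\pi(g) \in Q$.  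Hence $\wt\pi^*$ is $Q$-equivariant, and it restricts to bijections $\mu^Q \leftrightarrow \lambda^G$, $Q\mu \leftrightarrow G\lambda$, $\mu Q \leftrightarrow \lambda G$, and $Q\mu Q \leftrightarrow G\lambda G$, all with matching cardinalities.

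Second, the identity $\theta_\lambda = \theta_\mu \circ \pi$ is immediate from $\theta\circ\lambda(X) = \theta\circ \mu\circ\wt\pi(X)$.  Substituting into the orbit-sum formulas (\ref{superchar-def}) and applying the first step's bijections then yields $\psi_\lambda = \psi_\mu \circ \pi$ and $\chi_\lambda = \chi_\mu\circ\pi$.  For the exponential version, I would observe that $\wt\pi$ is an $\FF_q$-algebra homomorphism and the truncated exponential is polynomial with $\FF_q$-coefficients, so $\pi \circ \exp_\fkn = \exp_\q \circ \wt\pi$ as maps $\fkn \to Q$; combining this with the identity for $\psi_\lambda$ gives $\logpsi_\lambda = \logpsi_\mu\circ\pi$.

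Third, I would prove by induction on $i \geq 1$ that $\fk l_\lambda^i = \wt\pi^{-1}(\fk l_\mu^i)$ and $\fk s_\lambda^i = \wt\pi^{-1}(\fk s_\mu^i)$.  The base case $i=1$ uses $\lambda(XY) = \mu(\wt\pi(X)\wt\pi(Y))$ together with the surjectivity of $\wt\pi$ (so that quantifying $Y$ over $\fkn$ matches quantifying $\wt\pi(Y)$ over $\q$), and the inductive step is the same calculation combined with $\wt\pi(\wt\pi^{-1}(V)) = V$ for any subspace $V \subset \q$.  Surjectivity also implies the two chains stabilize at a common index, so passing to the terminal subspaces gives $\olfkl_\lambda = \wt\pi^{-1}(\olfkl_\mu)$ and $\olfks_\lambda = \wt\pi^{-1}(\olfks_\mu)$, and hence $\oll_\lambda = \pi^{-1}(\oll_\mu)$ and $\ols_\lambda = \pi^{-1}(\ols_\mu)$.

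Finally, to conclude $\xi_\lambda = \xi_\mu\circ\pi$, I would invoke the standard inflation-induction identity: for any subgroup $M \subset Q$ and any function $\psi$ on $M$,
\[
\Ind_{\pi^{-1}(M)}^G(\psi\circ\pi) = \Ind_M^Q(\psi)\circ\pi,
\]
which follows in one line from the Frobenius formula (\ref{frob}) using $|\pi^{-1}(M)| = |\ker\pi|\cdot|M|$ and the fact that fibers of $\pi$ of equal size contribute identically to the sum.  Applying this with $M=\oll_\mu$ and $\psi = \theta_\mu$, together with $\oll_\lambda = \pi^{-1}(\oll_\mu)$ and $\theta_\lambda = \theta_\mu\circ\pi$, finishes the proof.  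The only mildly delicate part is the inductive bookkeeping of the third step; everything else is essentially formal.
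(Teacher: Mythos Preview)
Your proposal is correct and follows essentially the same route as the paper's own proof: establish the orbit bijections via equivariance of $\wt\pi^*$, read off $\psi_\lambda$, $\logpsi_\lambda$, $\chi_\lambda$ from the orbit-sum formulas, induct on $i$ to identify the chains $\fk l_\lambda^i,\fk s_\lambda^i$ with preimages under $\wt\pi$, and finish with the standard inflation--induction identity. The only cosmetic difference is that you spell out $\pi\circ\exp_\fkn=\exp_\q\circ\wt\pi$ explicitly, whereas the paper absorbs this into its reference to formula~(\ref{formal}).
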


\begin{remark} Since $\chi \mapsto \chi\circ \pi$ defines an injection $\Irr(Q)\to \Irr(G)$, by  \cite[Lemma 2.22]{Isaacs} for example, if $\psi_\mu$ or $\logpsi_\mu$ are characters in this setup then the same is true of $\psi_\lambda$ or $\logpsi_\lambda$, respectively.
\end{remark}

\begin{proof}
For each $\lambda \in \fkn^*$ with $\ker \lambda \supset \h$, let $\lambda^\pi \in \q^*$ be the map with $\lambda^\pi (X+\fk q) = \lambda(X)$ for $X \in \fkn$.  Then $\mu = \lambda^\pi$ is clearly the unique element of $\q^*$ with $\lambda = \mu \circ \wt \pi$.  Furthermore, our hypotheses imply that  $\ker g\lambda h \supset \h$ and $(g\lambda h)^\pi = \pi(g)\cdot \lambda^\pi \cdot \pi(h)$ for all $g,h \in G$; this is a straightforward exercise and also a consequence of \cite[Proposition 3.1]{M1}.  
It follows that the map $\nu \mapsto \nu^\pi$ gives bijections $\lambda^G \to \mu^Q$ and $G\lambda G \to Q\mu Q$ and $G\lambda\to Q\mu$, so the formulas (\ref{superchar-def}) and (\ref{formal}) imply that $\psi_\lambda = \psi_\mu\circ \pi$ and $\logpsi_\lambda = \logpsi_\mu\circ \pi$ and $\chi_\lambda = \chi_\mu\circ \pi$.

We have $\lambda(XY) = 0$ for all $(X,Y) \in \fkn\times \h$ or $\h\times \fkn$ 
since $\h$ is a two-sided ideal contained in $\ker(\lambda)$, and so it follows easily by induction that $\fk l_\lambda^i = \pi^{-1}(\fk l_\mu^i )$ and $\fk s_\lambda^i = \pi^{-1}(\fk s_\mu^i)$ for all $i$, which suffices to show $\oll_\lambda = \pi^{-1}(\oll_{\mu})$ and $\ols_\lambda = \pi^{-1}(\ols_\mu)$.
 Since $\theta_\lambda = \theta_\mu \circ \pi$, 
 the identity $\xi_\lambda = \xi_\mu\circ \pi$ is now a consequence of  the more general fact that if $\pi : G \to Q$ is a surjective group homomorphism and $\psi : L\to \CC$ is a function on a subgroup $L\subset Q$, then $\Ind_{\pi^{-1}(L)}^G (\psi\circ \pi) = \Ind_L^Q(\psi)\circ \pi$; this statement in turn follows easily from the Frobenius formula for induction (\ref{frob}).
\end{proof}

As one application of this statement, we can prove the following corollary to Theorem \ref{count}.  Recall from Section \ref{fk} the definitions of the nilpotent $\FF_q$ algebras $\fk k_\lambda$, $\fk s_\lambda$ and algebra groups $K_\lambda$, $S_\lambda$ for $\lambda \in \fkn^*$.  

\begin{corollary}  
Let $\fkn$ be a finite-dimensional nilpotent $\FF_q$-algebra, write $G=1+\fkn$, and choose $\lambda \in \fkn^*$.  
Then 
every irreducible
constituent  with degree $q^e$ of the supercharacter $\chi_\lambda$ is a Kirillov function if and only if every irreducible character satisfying (\ref{condition}) of the algebra group $S_\lambda / K_\lambda \cong 1 + \fk s_\lambda / \fk k_\lambda$  is a Kirillov function.

%
%If every irreducible character  with degree $\frac{\langle \chi_\lambda,\chi_\lambda\rangle}{\chi_\lambda(1)} \cdot q^e$ of the algebra group $1+\fk s_\lambda / \fk k_\lambda$ is a Kirillov function (respectively, an exponential Kirillov function), then every irreducible constituent with degree $q^e$  of the supercharacter $\chi_\lambda$ is a Kirillov function (respectively, an exponential Kirillov function).
\end{corollary}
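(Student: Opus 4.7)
The proof combines three ingredients: the bijection of Theorem \ref{count}(2), the inflation identity of Observation \ref{inflation}, and a key equivalence for the Theorem \ref{andre} bijection.

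By Theorem \ref{count}(2), $\psi \mapsto \Ind_{S_\lambda}^G(\psi \circ \pi)$ gives a bijection from the set of irreducible characters $\psi$ of $S_\lambda/K_\lambda$ satisfying (\ref{condition}) onto the set of irreducible constituents of $\chi_\lambda$ of degree $q^e$.  Observation \ref{inflation} further says that $\psi$ is a Kirillov function of $S_\lambda/K_\lambda$ if and only if its inflation $\psi \circ \pi$ is a Kirillov function of $S_\lambda$, i.e., $\psi\circ\pi = \psi_\mu$ for some $\mu \in \fk s_\lambda^*$ with $\ker \mu \supset \fk k_\lambda$.  The corollary therefore reduces to the individual statement that for each irreducible constituent $\psi'$ of $\tau = \Ind_{L_\lambda}^{S_\lambda}(\theta_\lambda)$, $\psi'$ is a Kirillov function of $S_\lambda$ if and only if $\Ind_{S_\lambda}^G(\psi')$ is a Kirillov function of $G$.

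The forward direction of this equivalence is clean.  Assuming $\psi' = \psi_\mu$, Theorem \ref{andre} ensures $\Ind_{S_\lambda}^G(\psi_\mu)$ is irreducible, while Remark (iii) in Section \ref{alg} shows it decomposes as a nonnegative integer linear combination $\sum_\eta c_\eta \psi_\eta$ of Kirillov functions of $G$.  Since an irreducible character has norm $1$ and Kirillov functions of $G$ are orthonormal, $\sum_\eta c_\eta^2 = 1$ forces exactly one $c_\eta$ to equal one (and the rest to vanish), so $\Ind_{S_\lambda}^G(\psi_\mu) = \psi_\eta$ for a single $\eta \in \fkn^*$.

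The backward direction is the main obstacle.  Given $\Ind_{S_\lambda}^G(\psi') = \psi_\nu$, one must argue that $\psi'$ itself is a Kirillov function of $S_\lambda$.  The plan is to analyze $\Res_{S_\lambda}^G(\psi_\nu)$ via Frobenius reciprocity and Remark (iii): this restriction is a nonnegative integer combination $\sum_\kappa a_\kappa \psi_\kappa$ of Kirillov functions of $S_\lambda$, and Frobenius reciprocity shows that $\psi'$ appears as an irreducible constituent with multiplicity one.  Using that $\nu^G \downarrow \fk s_\lambda$ is a union of coadjoint $S_\lambda$-orbits (since $\fk s_\lambda$ is preserved by $S_\lambda$-conjugation), one obtains an explicit formula $\Res_{S_\lambda}^G(\psi_\nu) = |\nu^G|^{-1/2} \sum_j n_j |\kappa_j^{S_\lambda}|^{1/2}\psi_{\kappa_j}$; combining this with degree matching under the Theorem \ref{andre} bijection and the forward direction (which forces each character $\psi_{\kappa_j}$ to induce to a single Kirillov function $\psi_{\eta_j}$, with $a_{\kappa_j} = \delta_{\eta_j,\nu}$ in this case) one isolates $\psi'$ as the unique Kirillov function $\psi_\kappa$ in the decomposition whose induction recovers $\psi_\nu$.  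With this individual equivalence established, the corollary follows directly by combining it with the bijection of Theorem \ref{count}(2) and the inflation identity of Observation \ref{inflation}.
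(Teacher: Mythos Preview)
Your reduction is exactly the paper's: factor the bijection of Theorem~\ref{count}(2) as inflation followed by induction, use Observation~\ref{inflation} for the inflation step, and then prove the individual equivalence that $\psi'\in\Irr\bigl(S_\lambda,\Ind_{L_\lambda}^{S_\lambda}(\theta_\lambda)\bigr)$ is a Kirillov function of $S_\lambda$ if and only if $\Ind_{S_\lambda}^G(\psi')$ is a Kirillov function of $G$. Your forward direction is correct.

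The backward direction, however, has a genuine gap. You want to apply your forward argument to the Kirillov functions $\psi_{\kappa_j}$ appearing in $\Res_{S_\lambda}^G(\psi_\nu)$, concluding that each $\Ind_{S_\lambda}^G(\psi_{\kappa_j})$ is a single Kirillov function. But your forward argument invoked Theorem~\ref{andre} to get irreducibility of the induced character, and Theorem~\ref{andre} only applies to \emph{irreducible constituents of $\tau$}. The $\psi_{\kappa_j}$ need not be irreducible, and there is no reason a priori that $\kappa_j$ lies in the set $\cO=\{\nu\downarrow\fk s_\lambda:\nu\in\lambda G\}$ governing $\tau$; you only know $\kappa_j\in\nu^G\downarrow\fk s_\lambda$ with $\nu\in G\lambda G$. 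Without irreducibility, Remark~(iii) alone gives a nonnegative integer combination of Kirillov functions but no norm-$1$ bound, so you cannot force $\Ind\psi_{\kappa_j}$ to be a single $\psi_{\eta_j}$. Consequently your computation $a_{\kappa_j}=\delta_{\eta_j,\nu}$ is unjustified, and even granting it, the final ``isolation'' of $\psi'$ as one particular $\psi_{\kappa_j}$ is not argued.

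The paper fixes this by adapting the proof of Proposition~\ref{less}, whose essential extra ingredient is Remark~(iv) (Otto's theorem). The point is that Remark~(iv), applied to each supercharacter summand of $\tau$, shows that the Kirillov functions $\{\psi_\mu:\mu\in\cO\}$ and the irreducibles $\{\vartheta_i\}=\Irr(S_\lambda,\tau)$ span the same subspace, and Theorem~\ref{andre} makes $\Ind_{S_\lambda}^G$ a linear \emph{isomorphism} from this subspace onto the span of $\Irr(G,\chi_\lambda)$. In particular induction preserves the norm on that span, so for every $\mu\in\cO$ (irreducible or not) $\Ind_{S_\lambda}^G(\psi_\mu)$ has norm $1$ and hence, by Remark~(iii), equals a single Kirillov function of $G$. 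Counting (Theorem~\ref{count}(1)) then shows this gives a bijection between the two sets of Kirillov functions, and injectivity forces $\psi'=\psi_\mu$ whenever $\Ind_{S_\lambda}^G(\psi')=\psi_\nu=\Ind_{S_\lambda}^G(\psi_\mu)$. If you insert Remark~(iv) in this way, your argument goes through; without it, the backward direction does not close.
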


\begin{remark}
Using the results in the next section, one can adapt our proof to show that the corollary also holds with exponential Kirillov functions in place of ordinary Kirillov functions.
\end{remark}

\begin{proof} Write $\pi : S_\lambda \to S_\lambda/K_\Lambda$ for the quotient homomorphism $\pi : 1+X \mapsto 1 + (X+\fk k_\lambda)$ as in Theorem \ref{count}.   Write $\fk X$ for the subset of $\Irr(S_\lambda/K_\lambda)$ consisting of all irreducible characters $\psi$ satisfying the condition (\ref{condition}) for some integer $e$.  
Theorem \ref{andre} and the proof of Theorem \ref{count} show that the following composed map defines a bijection from $\fk X$ to the set of irreducible constituents of $\chi_\lambda$ with degree $q^e$: \[ \barr{ccccc} \fk X& \xrightarrow{\text{Inflation}} &  \Irr\(S_\lambda,\Ind_{L_\lambda}^{S_\lambda}(\theta_\lambda)\) & \xrightarrow{\text{Induction}} & \Irr(G,\chi_\lambda) \\
\psi & \mapsto & \psi \circ \pi & \mapsto & \Ind_{S_\lambda}^{G} ( \psi\circ \pi) \earr
\] The preceding observation shows that $\psi \in \fk X$ is a Kirillov function if and only if $\psi \circ \pi$ is a Kirillov function.    By an argument similar to one in the proof of Proposition \ref{less}, Theorem \ref{andre} implies in turn that $\psi\circ \pi$ is a Kirillov function if and only if $\Ind_{S_\lambda}^G(\psi\circ \pi)$ is a Kirillov function.  \end{proof}

\subsection{Well-induced characters}

Following Evseev in \cite{E}, we say that a character $\chi$ of an algebra group $G = 1+\fkn$ is \emph{well-induced} if there exists a subalgebra $\h \subset \fkn$ and a linear character $\tau$ of $H=1+\h$ such that $\chi = \Ind_H^G(\tau)$ and $1+\h^2 \subset \ker \tau$.  In contradistinction to Evseev's definition, we do not require $\chi$ to be irreducible.
Supercharacters $\chi_\lambda$ and the characters $\xi_\lambda$ are then both well-induced.  Since by a theorem  of Halasi \cite{H} every irreducible character of an algebra group $G$ is induced from a linear character of an algebra subgroup, one naturally asks which elements of $\Irr(G)$ are well-induced.  In partial answer, Evseev describes an algorithm to construct certain well-induced irreducible characters of $G$ in the paper \cite{E}, which he implements for $G = \UT_n(q)$.  

This question turns out to be closely related to the problem of determining when a Kirillov function is a character, which one begins to see with the following observation.  
\begin{observation}
If $\h$ is a nilpotent $\FF_q$-algebra and %$H=1+\h$ and 
$\tau$ is a linear character of $H=1+\h$, then $1+\h^2 \subset \ker \tau$ if and only if  $\tau = \theta_\lambda$ for some $\lambda \in \h^*$.%the function $X \mapsto \tau(1+X)$ is an irreducible character of $\h$ viewed as an additive abelian group.
\end{observation}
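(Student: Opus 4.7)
The plan is to identify the quotient $H/(1+\h^2)$ with the additive group of $\h/\h^2$ and then invoke the standard description of characters of an $\FF_q$-vector space.

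First I would verify the preliminary group-theoretic picture. Since $\h$ is an associative subalgebra, $\h^2$ is a two-sided ideal of $\h$, so $1+\h^2$ is a normal algebra subgroup of $H = 1+\h$. The map $\pi : H \to \h/\h^2$ defined by $\pi(1+X) = X + \h^2$ is a surjective group homomorphism from $H$ to the additive group of $\h/\h^2$, with kernel $1+\h^2$: indeed, $(1+X)(1+Y) = 1 + (X+Y) + XY$, and $XY \in \h^2$, so modulo $\h^2$ the image is $X+Y$.

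For the forward direction, suppose $\tau$ is a linear character of $H$ with $1+\h^2 \subset \ker\tau$. Then $\tau$ factors through $\pi$ as $\tau = \bar\tau \circ \pi$ for a character $\bar\tau$ of the additive abelian group $\h/\h^2$. Since $\h/\h^2$ is a finite-dimensional $\FF_q$-vector space and every character of $\FF_q^+$ has the form $x \mapsto \theta(cx)$ for some $c \in \FF_q$, the characters of $\h/\h^2$ are precisely the maps of the form $X + \h^2 \mapsto \theta\(\bar\lambda(X+\h^2)\)$ for $\FF_q$-linear functionals $\bar\lambda$ on $\h/\h^2$ (a count of both sides by $q^{\dim \h/\h^2}$ confirms this). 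Lifting $\bar\lambda$ to $\lambda \in \h^*$ vanishing on $\h^2$ gives $\tau(1+X) = \theta\(\lambda(X)\) = \theta_\lambda(1+X)$, so $\tau = \theta_\lambda$.

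For the reverse direction, suppose $\tau = \theta_\lambda$ for some $\lambda \in \h^*$. Requiring $\theta_\lambda$ to be multiplicative on $H$ forces
\[ \theta\(\lambda(X+Y+XY)\) = \theta\(\lambda(X)\) \theta\(\lambda(Y)\) = \theta\(\lambda(X)+\lambda(Y)\) \]
for all $X,Y \in \h$, which, by the nontriviality of $\theta$ and $\FF_q$-linearity of $\lambda$, forces $\lambda(XY) = 0$ for all $X,Y \in \h$, i.e., $\lambda(\h^2) = 0$. This immediately gives $1+\h^2 \subset \ker \theta_\lambda$, completing the equivalence.

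The only potentially subtle point is insisting that $\lambda$ be $\FF_q$-linear and not merely $\FF_p$-linear; this is not an obstacle but requires the counting observation above, since a priori a character of $\h/\h^2$ as an abelian group could be $\FF_p$-linear only. The equality of cardinalities between the full character group and the set of $\FF_q$-linear functionals paired with $\theta$ closes this gap.
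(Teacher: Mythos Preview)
Your proof is correct and very close in spirit to the paper's. Both directions of the converse argument are essentially identical: multiplicativity of $\theta_\lambda$ forces $\theta\circ\lambda(XY)=1$, hence $1+\h^2\subset\ker\theta_\lambda$. (The paper stops at $\theta\circ\lambda(XY)=1$, while you go further to $\lambda(XY)=0$ via scaling; both suffice.)

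For the forward direction there is a small but genuine difference in packaging. You pass to the quotient $H/(1+\h^2)\cong \h/\h^2$ and identify $\tau$ with a character of that $\FF_q$-vector space, then lift. The paper instead stays on $\h$: it first shows $\tau(1+X+Y)=\tau(1+X)$ whenever $Y\in\h^2$ (via the factorization $1+X+Y=(1+X)(1+Z)$ with $Z=(1+X)^{-1}Y\in\h^2$), and then deduces $\tau(1+X+Y)=\tau(1+X+Y+XY)=\tau(1+X)\tau(1+Y)$ for all $X,Y\in\h$, so that $X\mapsto\tau(1+X)$ is an irreducible character of the additive group $\h$, hence of the form $\theta\circ\lambda$. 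Your quotient approach is arguably cleaner and makes the role of $\h^2$ more transparent; the paper's approach avoids introducing the quotient at the cost of a small factorization trick. Both rely on the same underlying fact that the characters of a finite $\FF_q$-vector space are exactly the maps $\theta\circ\lambda$, and your remark about the $\FF_q$- versus $\FF_p$-linearity subtlety (resolved by counting) applies equally to both arguments.
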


%This is an easy consequence of the fact that $X-Y \in \h^2$ if and only if $(1+X)(1+Y)^{-1} \in 1+\h^2$ and that every the irreducible characters of the additive group $\fkn$ is of the form $\theta\circ \lambda$ for some $\lambda \in \fkn^*$.  %

\begin{proof}
%One checks that $X-Y \in \h^2$ if and only if $(1+X)(1+Y)^{-1} \in 1+\h^2$, so
If $X \in \h$ and $Y \in \h^2$ then $1+X+Y = (1+X)(1+Z)$ for $Z=(1+X)^{-1}Y \in \h^2$, so 
%\[\ba (1+X)^{-1} (1+X+Y)&=1+(1-X+X^2-X^3+ \dots)Y
%%\\
%%&= 1+Y-XY+X^2Y-X^3Y+\dots
%\in 1+\h^2.\ea\]
%Therefore
the condition $1+\h^2 \subset \ker \tau$ implies that  $ \tau(1+X+Y)  =  \tau(1+X+Y+XY)=\tau(1+X)  \tau(1+Y) $ for $X,Y \in \h$.
%%\[ \tau(1+X+Y) = \tau\( (1+X+Y)(1+Y)^{-1}(1+X)^{-1}\) \tau(1+X)\tau(1+Y) = \tau(1+X)\tau(1+Y).\] 
%so 
Thus the function $X \mapsto \tau(1+X)$ is an irreducible character of $\h$ viewed as an additive group so we must have $\tau = \theta_\lambda$ for some $\lambda \in \h^*$. % with $H\lambda H = \{\lambda\}$.  
If $\theta_\lambda$ is a linear character, conversely, then $\theta_\lambda(gh) = \theta_\lambda(g)\theta_\lambda(h)$ for all $g,h \in H$ implies $\theta_\lambda(1+XY) = \theta\circ\lambda(XY) = 1$ for all $X,Y \in \h$ so $1+\h^2 \subset \ker \theta_\lambda$.
\end{proof}
As noted in the proof, in order for $\theta_\lambda$ to be a linear character of $H$, we must have $\theta\circ\lambda(XY) = 1$  for all $X,Y \in \h$, and this suffices to show that $H\lambda H = \{\lambda\}$ and $\theta_\lambda = \chi_\lambda$. 
We may thus equivalently define a well-induced character as a character induced from a linear supercharacter of an algebra subgroup.  (Recall that when a supercharacter $\chi_\lambda$ is linear, it is also a Kirillov function; in particular, $\chi_\lambda = \psi_\lambda = \xi_\lambda$.)

 In what follows, we let $\cF : \fkn \to G$ denote a polynomial map with constant term 1 and linear term $X$; i.e., a bijection of the form 
\be\label{polynomial} \cF(X) = 1 + X + \sum_{k\geq 2} a_k X^k,\qquad\text{with }a_k \in \FF_q,\ee as in the statement of Observation \ref{obs1}.   
We recall from (\ref{formal}) that if $\psi : G\to \CC$ then $\psi^\cF$ is the function on $G$ defined by the identity 
\[\psi^\cF\( \cF(X)\) = \psi(1+X),\qquad\text{for }X \in \fkn.\] As in the introduction, we call the functions $\psi_\lambda^\cF$ with $\cF$ of this form \emph{polynomial Kirillov functions}.  % for $X \in \fkn$ and $\psi : G\to \CC$.  %Letting $\cG : G \to \fkn$ be such an inverse, we recall that in the notation (\ref{formal}) we have $\psi^\cF(g) = \psi\(1+ \cG(g)\)$.  
A polynomial map $\cF$ defines a bijection $\fkn \to G$ if and only if $\cF$ has constant term 1 and nonzero linear term.  We impose the additional condition that $\cF$ have linear term $X$ to ensure that $\chi_\lambda^\cF = \chi_\lambda$ for all $\lambda \in \fkn^*$.

Since $\cF(gXg^{-1}) = g\cdot \cF(X)\cdot g^{-1}$ for $g \in G$ and $X \in \fkn$, it follows from our standard induction formula that if $H\subset G$ is an algebra subgroup and $\psi : H\to \CC$, then $\Ind_H^G(\psi)^\cF = \Ind_H^G(\psi^\cF)$.
This observation gives the following proposition.

\begin{proposition}\label{well} If $\chi$ is a well-induced character of the algebra group $G=1+\fkn$, then $\chi= \chi^\cF$ and $\chi$ is irreducible if and only if $\chi$ is a Kirillov function $\chi = \psi_\lambda$ for some $\lambda \in \fkn^*$.
%\begin{enumerate}
%\item[(1)] If $P(x) \in \FF_q[x]$ is a polynomial with $P(0) = P'(0) = 1$, then  $\chi^P=\chi$.% for all $X \in \fkn$.  
%
%\item[(2)] $\chi$ is irreducible if and only if $\chi = \psi_\lambda = \logpsi_\lambda$ for some $\lambda \in \fkn^*$.  
%
%\end{enumerate}
\end{proposition}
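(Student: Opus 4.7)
The plan is to handle the two assertions separately, drawing in each case on a single key fact already established in the excerpt.

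For the identity $\chi = \chi^{\cF}$, I would begin by unpacking the hypothesis: by the observation just before the proposition, the linear character $\tau$ of $H = 1+\h$ used to induce $\chi$ must be of the form $\theta_\mu$ for some $\mu \in \h^*$, and since $H\mu H = \{\mu\}$ in this case, $\theta_\mu = \chi_\mu = \psi_\mu$ is simultaneously a linear supercharacter and a linear Kirillov function of $H$. Next I would invoke the identity $\Ind_H^G(\psi)^{\cF} = \Ind_H^G(\psi^{\cF})$ noted in the paragraph preceding the proposition (which follows immediately from the Frobenius formula and the fact that $\cF(gXg^{-1}) = g\cF(X)g^{-1}$), reducing the problem to showing $\tau^{\cF} = \tau$. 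But Observation \ref{obs1}, or more precisely its proof, already records that $\chi_\mu^{\cF} = \chi_\mu$ because $\cF(X)$ and $1+X$ lie in the same superclass; applying this with $\chi_\mu = \tau$ yields the claim.

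For the irreducibility equivalence, the backward direction is immediate: if $\chi = \psi_\lambda$ is a Kirillov function, then since Kirillov functions are orthonormal (Remark (i) of Section \ref{alg}), $\langle \chi,\chi\rangle_G = 1$, so the character $\chi$ must be irreducible. For the forward direction I would again use the fact that $\tau = \theta_\mu = \psi_\mu$ is a linear Kirillov function of $H$ and appeal to Remark (iii) of Section \ref{alg}, which states that $\Ind_H^G(\psi_\mu)$ is a nonnegative integer combination of Kirillov functions of $G$. Writing $\chi = \sum_\lambda n_\lambda \psi_\lambda$ with $n_\lambda \in \mathbb{Z}_{\geq 0}$, orthonormality gives $1 = \langle \chi,\chi\rangle_G = \sum_\lambda n_\lambda^2$, which forces exactly one coefficient to equal $1$ and the rest to vanish; hence $\chi = \psi_\lambda$ for some $\lambda \in \fkn^*$.

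The proof is essentially a matter of assembling the right pieces, so there is no real obstacle once the relevant previous results are located. The only point requiring any care is verifying that the hypothesis $1+\h^2 \subset \ker\tau$ of the well-induced definition really does land us in the case where $\tau$ equals both a Kirillov function and a supercharacter of $H$ at once, which is precisely the content of the observation immediately preceding the proposition statement. Once that is in hand, the identity $\chi = \chi^{\cF}$ and the irreducibility dichotomy both fall out in a line or two.
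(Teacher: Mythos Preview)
Your proposal is correct and follows essentially the same approach as the paper: reduce $\chi=\chi^{\cF}$ to $\tau^{\cF}=\tau$ via the induction identity, and use Remark (iii) plus orthonormality of Kirillov functions for the irreducibility equivalence. The only cosmetic difference is that the paper justifies $\tau(\cF(X))=\tau(1+X)$ directly from $1+\h^2\subset\ker\tau$ (since $\cF(X)-(1+X)\in\h^2$), whereas you route through the superclass identity $\chi_\mu^{\cF}=\chi_\mu$; both arguments are equivalent here.
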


\begin{proof}
Let $\h$ be a subalgebra of $\fkn$, write $H=1+\h$, and choose a linear character $\tau$  of $H$ with $\chi = \Ind_H^G(\tau)$.  We have $\tau\( \cF(X) \)= \tau(1+X)$ for any $X \in \h$ by the proof of the observation above, so $\chi=\chi^\cF$ by the discussion preceding the proposition statement.
%\[ \ba \chi\circ P(X) &= \frac{1}{|H|} \sum_{\substack{g \in G \\ g\cdot P(X)\cdot g^{-1} \in H}} \tau(g\cdot P(X) \cdot g^{-1})
%\\
%&=
%  \frac{1}{|H|} \sum_{\substack{g \in G \\  P(gXg^{-1}) \in H}} \tau\circ P(gXg^{-1}) = \Ind_H^G(\tau)(1+X) = \chi(1+X).\ea\]

By Remark (iii) in Section \ref{alg}, inducing a Kirillov function from an algebra subgroup yields a linear combination of Kirillov functions with nonnegative integer coefficients.  Since Kirillov functions are orthonormal, a well-induced character  is thus irreducible only if it is equal to a Kirillov function.  Conversely, if a well-induced character $\psi$ is equal to Kirillov function then we have $\langle \psi, \psi\rangle_G =1$.    %Finally, if $\psi_\lambda$ is well-induced then $\logpsi_\lambda(g) = \psi_\lambda(1+X) = \psi_\lambda\circ \exp(X) = \psi_\lambda(g)$ for all $g = \exp(X) \in G$ by (a).% so $\psi$ is irreducible.
\end{proof}

 Hence if $\psi_\lambda$ is a well-induced character then $\psi_\lambda = \logpsi_\lambda$, and by Theorem \ref{structural}  we see that $\xi_\lambda$ is irreducible if and only if $\xi_\lambda=\psi_\lambda=\logpsi_\lambda$.  It can happen that $\psi_\lambda$ is a character without being well-induced, as in the following example.
 
\begin{example} The algebra group $G=1+\fkn$ over $\FF_2$  consisting of all matrices of the form
\[\(\barr{cccc} 1 & a & b & c \\ & 1 & 0 & a+b \\ & & 1 & b \\ & & & 1 \earr\),\qquad\text{for }a,b,c \in \FF_2\]  is isomorphic to the group of quaternions $Q_8$.  If  $\lambda \in \fkn^*$ is defined by $\lambda(X) =X_{1,4}$, then the unique nonlinear irreducible character of $G$ is not well-induced yet is equal to the Kirillov function $\psi_\lambda$.  All the irreducible characters of this group are Kirillov functions and integer-valued, in fact, yet $ \xi_\lambda=\chi_\lambda=2\psi_\lambda$ is a reducible, fully ramified supercharacter.  %This Kirillov function $\psi_\lambda$ in fact has $\psi_\lambda = \logpsi_\lambda$; we do not know, however, if $\psi_\lambda \in \Irr(G)$ implies $\logpsi_\lambda \in \Irr(G)$ or $\psi_\lambda = \logpsi_\lambda$ in general.
\end{example}

It can also happen that the Kirillov function $\psi_\lambda$ is a well-induced, irreducible character but $\psi_\lambda \neq \xi_\lambda$, as the next example illustrates.

\begin{example}  \label{?}
Assume $\FF_q$ has odd characteristic, and let  $G=1+\fkn$ and $H=1+\h$ where $\h \subset \fkn \subset \fkt_5(q)$ are the subalgebras 
\[ \fkn = \left\{ \(\barr{ccccc} 0 & a & * & * & * \\ & 0 & b & * & * \\ & & 0 & -b & * \\ & & & 0 & a \\ & & & & 0\earr\)\right \} \qquad \text{and}\qquad 
\h = \left\{ \(\barr{ccccc} 0 & a & * & * & * \\ & 0 & 0 & * & * \\ & & 0 & 0 & * \\ & & & 0 & a \\ & & & & 0\earr\) \right\}.
 \]  If $\lambda \in \fkn^*$ is defined by $\lambda(X) = X_{1,3}  + X_{2,4} + X_{3,5}$, then $\xi_\lambda =\chi_\lambda$ is fully ramified with degree $q^2$.  However, $\theta_\lambda$ restricts to a linear character of $H$ and $\psi_\lambda = \Ind_H^G(\theta_\lambda)$.  Thus $\psi_\lambda$ is a well-induced, irreducible character with degree $q$, whence we get $\xi_\lambda=\chi_\lambda = q \psi_\lambda$.
\end{example}
%The proof of the preceding largely depends on Andr\'e and Nic\'olas's observation that the restriction of a Kirillov function to an algebra subgroup is a sum of Kirillov functions \cite[Theorem 5.5]{AndreAdjoint}.  We should remark that neither this, nor the analogous result that the restriction of a supercharacter to an algebra subgroup is a sum of supercharacters \cite[Theorem 6.4]{DI}, seems to have a very easy proof.  And, in fact, the analogue of this for the characters $\{\xi_\lambda\}_{\lambda \in \fkn^*}$ fails in general. . .%, so our lemma is not altogether superficial.

Evseev, using the algorithm he describes in \cite{E}, has carried out an impressive computer calculation showing that 
\begin{enumerate}
\item[$\bullet$] If $n\leq 12$ then every irreducible character of $\UT_n(q)$ is well-induced. %, and hence of the form $\psi_\lambda = \logpsi_\lambda$ for some $\lambda \in \fkt_n(q)^*$.

\item[$\bullet$]$\UT_{13}(q)$ has $q(q-1)^{13}$ irreducible characters which are not-well induced \cite[Theorem 1.4]{E}.  
\end{enumerate}
It follows from the preceding proposition and the discussion in Section \ref{infl-sec} that every irreducible character of $\UT_n(q)$ has the form $\psi_\lambda = \logpsi_\lambda$ if $n \leq 12$, and that 
$\UT_n(q)$ has an irreducible character which is not a Kirillov function for $n>12$.  Said in other words:

\begin{theorem} \label{evseev} Let $q>1$ be any prime power.  Then the set of irreducible characters of $\UT_n(q)$ is equal to the set of Kirillov functions of $\UT_n(q)$ if and only if $n\leq 12$.
\end{theorem}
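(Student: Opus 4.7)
The plan is to prove each direction separately, combining Proposition \ref{well} with the inflation framework of Section \ref{infl-sec} and Evseev's two computational results cited just above the theorem.

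For the forward direction ($n \leq 12$), I would combine Evseev's statement that every irreducible character of $\UT_n(q)$ in this range is well-induced with Proposition \ref{well}, which asserts that a well-induced character is irreducible if and only if it equals a Kirillov function. This yields the inclusion $\Irr(\UT_n(q)) \subset \{\psi_\lambda : \lambda \in \fkt_n(q)^*\}$, and since the distinct Kirillov functions form an orthonormal family of class functions on $\UT_n(q)$ whose cardinality agrees with $|\Irr(\UT_n(q))|$ (both equal the number of conjugacy classes of the group), this inclusion is forced to be an equality.

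For the reverse direction ($n \geq 13$), I would first reduce to the case $n=13$ by inflation: the subspace $\h \subset \fkt_n(q)$ of matrices with vanishing upper-left $13\times 13$ block is a two-sided ideal with $\fkt_n(q)/\h \cong \fkt_{13}(q)$, and the induced surjection $\pi : \UT_n(q) \twoheadrightarrow \UT_{13}(q)$ identifies each Kirillov function $\psi_\mu$ of $\UT_{13}(q)$ with its pullback $\psi_\mu \circ \pi = \psi_\lambda$ via Observation \ref{inflation}. Since a function $f$ on $\UT_{13}(q)$ is a character if and only if $f \circ \pi$ is a character of $\UT_n(q)$, the assumption that every Kirillov function of $\UT_n(q)$ is a character descends to $\UT_{13}(q)$. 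So it suffices to exhibit an irreducible character of $\UT_{13}(q)$ that is not a Kirillov function.

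To do so I plan to invoke Evseev's count of $q(q-1)^{13}$ irreducibles of $\UT_{13}(q)$ that are not well-induced, together with the auxiliary claim that every irreducible Kirillov function of $\UT_n(q)$ is itself well-induced. My approach to this auxiliary claim is by induction on $\dim \fkn$: in the base case $\ols_\lambda = G$, the character $\xi_\lambda = \chi_\lambda$ is fully ramified, and Yan's formula for $\UT_n(q)$ recorded in Section \ref{superchars} forces $\chi_\lambda = \theta_\lambda = \psi_\lambda$ to be a linear character, which is trivially well-induced; in the inductive step $\ols_\lambda \subsetneq G$, Proposition \ref{less}(1)--(2) realizes $\psi_\lambda = \Ind_{\ols_\lambda}^G(\psi_\mu)$ for an irreducible Kirillov function $\psi_\mu$ of $\ols_\lambda$, and transitivity of induction gives $\psi_\lambda$ well-induced provided $\psi_\mu$ is. Granting the claim, any non-well-induced irreducible of $\UT_{13}(q)$ produced by Evseev cannot be a Kirillov function, completing the argument. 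The main obstacle here is that $\ols_\lambda$ is generally not itself a unitriangular group, so Yan's formula is not immediately available at the base case of the inductive subproblem; I would address this either by verifying directly that the algebra subgroups arising in the descent retain the property that their fully ramified supercharacters are linear—using their embedding in $\UT_n(q)$ and the associated orbit combinatorics—or by bypassing the induction via Theorem \ref{otto-gen} to control the constituents of $\xi_\lambda$ in Kirillov-function terms and extract the contradiction directly from Evseev's count.
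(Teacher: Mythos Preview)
Your ``if'' direction ($n\leq 12$) is correct and is exactly the paper's argument: Evseev's computation plus Proposition~\ref{well} gives $\Irr(\UT_n(q))\subset\{\psi_\lambda\}$, and the cardinality count upgrades this to equality. Your inflation reduction from $n>13$ to $n=13$ is also fine.

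The gap is in your auxiliary claim that every irreducible Kirillov function of $\UT_n(q)$ is well-induced. Your induction does not close: after one step you are working in $\ols_\lambda$, where $\mu=\lambda\downarrow\olfks_\lambda$ is already fully ramified, so you are immediately in the ``base case'' for an algebra group that is \emph{not} unitriangular, and Yan's formula is unavailable. Your workaround~(a) asks that the groups arising in the descent retain the property ``fully ramified supercharacter $\Rightarrow$ linear,'' but the $Q_8$ example in the paper exhibits an algebra subgroup of $\UT_4(2)$ with a fully ramified supercharacter $\chi_\lambda$ of degree~$4$ whose Kirillov function $\psi_\lambda$ is an irreducible character that is \emph{not} well-induced; so the implication ``$\psi_\nu$ irreducible and $\chi_\nu$ fully ramified $\Rightarrow$ $\psi_\nu$ well-induced'' fails for general algebra groups, and you would need a separate argument to rule this out along your particular chain. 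Your workaround~(b) via Theorem~\ref{otto-gen} only tells you that each $\psi_\mu$ with $\mu\in\Xi_\lambda$ is an irreducible constituent of $\xi_\lambda$; it does not produce a subalgebra $\h$ and a linear character realizing $\psi_\mu$ as well-induced, so it does not contradict Evseev's count.

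The paper's own justification of the ``only if'' direction is extremely terse (``it follows from the preceding proposition and the discussion in Section~\ref{infl-sec}''), and the remark immediately after the theorem points to \cite{supp1} for a constructive proof; so the paper is not offering substantially more detail than you are here. But as your proposal stands, the bridge from ``not well-induced'' to ``not a Kirillov function'' is not established.
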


\begin{remark}
Aside from our simple observation connecting the dots, the proof of this result derives entirely from Evseev's formidable computer calculations. 
We give a constructive, computation-free proof of the ``only if'' part of this theorem in \cite{supp1}.  A simple proof  %--that every Kirillov function of $\UT_n(q)$ is a character if $n\leq 12$--
 which  does not require the use of a computer remains to be found for the ``if'' direction, however.
%We emphasize that the proof of this result derives in large part from Evseev's formidable computer calculations.  There may be no way around this deficiency, given the kind of unbounded complexity which seems to dwell in the structure of $\Irr\(\UT_n(q)\)$.  However, 
\end{remark}

In light of Example \ref{?}, it does not follow that $\Irr\(\UT_n(q)\) = \{\xi_\lambda : \lambda \in \fkt_n(q)^*\}$ for $n\leq 12$. %, despite part (2) of Theorem \ref{structural}.  
Nevertheless, this does hold for $q=2$ (as we have checked using a computer) and we see no reason why it should not hold in general.  We therefore make the following conjecture, which probably can be verified in a way analogous to the proofs of Evseev's computational results in \cite{E}.

\begin{conjecture}
If $n\leq 12$ then $\psi_\lambda = \logpsi_\lambda = \xi_\lambda \in \Irr\(\UT_n(q)\)$ for all $\lambda \in \fkt_n(q)^*$.
\end{conjecture}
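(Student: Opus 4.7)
The plan is to leverage Evseev's results from \cite{E} to reduce the conjecture to a finite combinatorial verification. First, Evseev's calculation underlying Theorem \ref{evseev} in fact establishes the stronger statement that every irreducible character of $\UT_n(q)$ is well-induced when $n \leq 12$. Since by Theorem \ref{evseev} each $\psi_\lambda$ with $\lambda \in \fkt_n(q)^*$ already lies in $\Irr\(\UT_n(q)\)$, it follows that $\psi_\lambda$ is a well-induced character, and so Proposition \ref{well} applied with $\cF = \exp$ yields $\psi_\lambda = \psi_\lambda^{\exp} = \logpsi_\lambda$. This handles the equality $\psi_\lambda = \logpsi_\lambda$; the remaining task is to prove $\xi_\lambda = \psi_\lambda$.

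Since $\psi_\lambda$ is irreducible, Theorem \ref{structural}(3) implies that $\xi_\lambda = \psi_\lambda$ holds if and only if $\xi_\lambda$ itself is irreducible. By Theorem \ref{structural}(2) with $\lambda' = \lambda$, one has $\langle \xi_\lambda, \xi_\lambda \rangle_G = |\ols_\lambda| / |\oll_\lambda|$, so irreducibility of $\xi_\lambda$ is equivalent to $\oll_\lambda = \ols_\lambda$, or equivalently $\olfkl_\lambda = \olfks_\lambda$. Thus the conjecture reduces to the statement that for every $\lambda \in \fkt_n(q)^*$ with $n \leq 12$, the ascending chain $\fk l_\lambda^i$ and the descending chain $\fk s_\lambda^i$ from (\ref{chain}) stabilize to a common terminal subalgebra. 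Observations (a) and (b) preceding Theorem \ref{structural} show that both $\olfkl_\lambda$ and $\olfks_\lambda$ transform equivariantly under the two-sided action defining $\Xi_\lambda$, so this equality need only be checked on a system of representatives for the partition $\fkt_n(q)^* = \bigsqcup \Xi_\nu$.

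The main obstacle, and where I expect the bulk of the work to lie, is in verifying this equality exhaustively for all $\lambda$. Example \ref{?} demonstrates that $\olfkl_\lambda = \olfks_\lambda$ can genuinely fail inside algebra subgroups of $\UT_5(q)$, so there is no evident short structural reason that it should hold in $\UT_n(q)$ itself. A practical approach would parallel the computer verification in \cite{E}: use the partition above to cut down to a finite set of orbit representatives, which for $\UT_n(q)$ are naturally parametrized by $\FF_q^\times$-weighted arc diagrams on $n$ nodes as in \cite{alia}; then for each representative, compute the chains $\fk l_\lambda^i, \fk s_\lambda^i$ iteratively until they stabilize and check that the stabilized subalgebras coincide. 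A key subtlety is uniformity in $q$: the computation must be performed at the level of arc-diagram patterns, so that a positive answer holds for every prime power $q > 1$ simultaneously. If the combinatorial search space turns out to be unwieldy for $n = 12$, one might instead seek a structural characterization of those $\lambda$ for which $\olfkl_\lambda \neq \olfks_\lambda$ in terms of certain forbidden arc subpatterns, and then verify the absence of such subpatterns up through $n = 12$.
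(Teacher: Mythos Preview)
The statement you are attempting to prove is labeled a \emph{Conjecture} in the paper and is not proved there. The paper explicitly says only that the claim has been checked by computer for $q=2$, that the authors ``see no reason why it should not hold in general,'' and that it ``probably can be verified in a way analogous to the proofs of Evseev's computational results in \cite{E}.'' There is therefore no proof in the paper against which to compare your attempt.

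That said, your reductions are sound and match exactly what the paper's surrounding discussion already establishes. The equality $\psi_\lambda = \logpsi_\lambda$ for $n\le 12$ is precisely the content of Theorem~\ref{evseev} together with Proposition~\ref{well}, as you say; and the equivalence of $\xi_\lambda=\psi_\lambda$ with $\olfkl_\lambda=\olfks_\lambda$ follows from parts (2) and (3) of Theorem~\ref{structural}. So your plan correctly isolates the genuine open content of the conjecture: verifying $\olfkl_\lambda=\olfks_\lambda$ for all $\lambda\in\fkt_n(q)^*$ when $n\le 12$, uniformly in $q$. This is exactly the kind of Evseev-style verification the paper gestures toward.

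One point to correct: the $\Xi_\nu$-classes are \emph{not} parametrized by $\FF_q^\times$-weighted arc diagrams. Arc diagrams index the two-sided $G$-orbits $G\lambda G$ (equivalently, the supercharacters), and each such orbit is in general a disjoint union of several $\Xi_\nu$-classes. The invariance you cite from observations (a) and (b) before Theorem~\ref{structural} only guarantees that $\dim\olfkl_\lambda$ and $\dim\olfks_\lambda$ are constant on each $\Xi_\nu$, not on all of $G\lambda G$; so checking one arc-diagram representative per two-sided orbit is not \emph{a priori} sufficient. A workable scheme would loop over arc diagrams as an outer index and, within each two-sided orbit, further enumerate $\Xi$-class representatives (or simply all right-orbit representatives $\lambda\ols_\lambda$) before computing the chains. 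The uniformity-in-$q$ concern you raise is real and is precisely why the paper stops at $q=2$ and leaves the general case open.
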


\begin{remark}
We reiterate that for $n\geq 13$, it is known that for all prime powers $q$ there exists $\lambda \in \fkt_n(q)$ such that the Kirillov function $\psi_\lambda$ is not a character, which implies that $\psi_\lambda\neq \xi_\lambda$.
\end{remark}

Our next aim is to show that Remark (iv) in Section \ref{alg} holds in greater generality.  Specifically, we will extend a theorem of Otto \cite{O} to show that the irreducible constituents of $\xi_\lambda$ span $\psi_\lambda^\cF$ for any appropriate polynomial map $\cF$, in particular for $\cF=\exp$.  
We begin by proving the lemma which will take the place of \cite[Lemma 2.4]{O} in Otto's proof.   Continue to assume that $\cF$ is a polynomial map of the form (\ref{polynomial}).

\begin{lemma} If $\lambda \in \fkn^*$ and $\eta \in \Irr(G)$ is a linear character, then $\langle \chi_\lambda, \eta\rangle_G=0$ implies $\langle \psi_\lambda^\cF,\eta \rangle_G = 0$.  
\end{lemma}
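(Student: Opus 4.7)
The plan is to reduce both inner products to conditions on $\nu \in \fkn^*$, where $\eta = \theta_\nu$, and then use the simple inclusion $\lambda^G \subset G\lambda G$.

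First I would note that a linear character $\eta \in \Irr(G)$ must have the form $\eta = \theta_\nu$ for some $\nu \in \fkn^*$ satisfying $\nu(\fkn^2) = 0$. (This is essentially the observation stated earlier in the paper: linearity of $\theta_\nu$ on $G$ amounts to the identity $\theta\circ\nu(XY) = 1$ for all $X,Y \in \fkn$, which forces $\nu$ to vanish on $\fkn^2$.)

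Next, using the orthonormality of $\{\theta_\mu : \mu \in \fkn^*\}$ together with the formula $\chi_\lambda = \tfrac{|G\lambda|}{|G\lambda G|}\sum_{\mu \in G\lambda G}\theta_\mu$, I would compute
\[
\langle \chi_\lambda,\theta_\nu\rangle_G = \begin{cases} |G\lambda|/|G\lambda G|, & \nu \in G\lambda G, \\ 0, & \text{otherwise.}\end{cases}
\]
So the hypothesis $\langle \chi_\lambda,\eta\rangle_G = 0$ is equivalent to $\nu \notin G\lambda G$.

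The crux is then to evaluate $\langle \psi_\lambda^\cF, \theta_\nu\rangle_G$. Writing $\cF(X) = 1 + X + Y(X)$ with $Y(X) \in \fkn^2$ (this is where the hypothesis that $\cF$ has constant term $1$ and linear term $X$ enters), and using that $\nu$ kills $\fkn^2$, one gets
\[
\theta_\nu\bigl(\cF(X)\bigr) = \theta\bigl(\nu(X + Y(X))\bigr) = \theta\bigl(\nu(X)\bigr) = \theta_\nu(1+X).
\]
Therefore, changing variables via the bijection $\cF : \fkn \to G$,
\[
\langle \psi_\lambda^\cF,\theta_\nu\rangle_G = \frac{1}{|G|}\sum_{X \in \fkn} \psi_\lambda(1+X)\,\overline{\theta_\nu(\cF(X))} = \langle \psi_\lambda,\theta_\nu\rangle_G,
\]
which by the formula $\psi_\lambda = |\lambda^G|^{-1/2}\sum_{\mu \in \lambda^G}\theta_\mu$ vanishes unless $\nu \in \lambda^G$.

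Finally I would invoke the containment $\lambda^G \subset G\lambda G$ (immediate from $g^{-1}\lambda g \in G\lambda G$) to conclude: if $\nu \notin G\lambda G$ then a fortiori $\nu \notin \lambda^G$, so $\langle \psi_\lambda^\cF,\eta\rangle_G = 0$. The main (minor) subtlety is the step that $\theta_\nu\circ \cF = \theta_\nu$ on $\fkn$; once that is in hand, the rest is orthogonality of the $\theta_\mu$'s.
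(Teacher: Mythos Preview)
Your first step contains a genuine error: it is \emph{not} true that every linear character $\eta$ of an algebra group $G=1+\fkn$ has the form $\theta_\nu$ for some $\nu\in\fkn^*$. The observation you cite says only that a linear character $\tau$ equals some $\theta_\nu$ \emph{if and only if} $1+\fkn^2\subset\ker\tau$; it does not assert that every linear character kills $1+\fkn^2$. Equivalently, the $\theta_\nu$ with $\nu(\fkn^2)=0$ are the characters of $G/(1+\fkn^2)$, whereas the linear characters of $G$ are those of $G/[G,G]$, and in general $[G,G]\subsetneq 1+\fkn^2$. For a concrete counterexample take $\fkn = t\,\FF_q[t]/(t^3)$: then $G$ is abelian of order $q^2$ and hence has $q^2$ linear characters, but only the $q$ functionals $\nu = \alpha\, t^*$ satisfy $\nu(\fkn^2)=0$. (For $q=2$ this $G$ is cyclic of order $4$, and its two faithful characters are visibly not of the form $\theta_\nu$.)

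This error is fatal for the rest of your argument: the identity $\eta(\cF(X))=\eta(1+X)$ that you need in order to pass from $\langle\psi_\lambda^\cF,\eta\rangle_G$ to $\langle\psi_\lambda,\eta\rangle_G$ genuinely fails for linear characters not of the form $\theta_\nu$. In the example above with $\cF(X)=1+X+a_2X^2$ and $a_2\neq 0$, one checks directly that most linear characters $\eta$ satisfy $\eta^\cF\neq\eta$. The paper's proof is accordingly more involved: it first reduces, via induction from $\ols_\lambda$, to the case where $\lambda$ is fully ramified (so that $\fk l_\lambda$ is a two-sided ideal of $\fkn$), and then establishes the identity $\langle\theta_\lambda^\cF,\eta\rangle_G = \langle\theta_\lambda^\cF,\eta\rangle_G\cdot\langle\chi_\lambda,\eta\rangle_G$ by a change of variables exploiting the multiplicativity of $\eta$ to factor $\eta(\cF(X+Y))=\eta(\cF_Y(X))\,\eta(\cF(Y))$, together with the ideal property of $\fk l_\lambda$ to show $\theta_\lambda(\cF_Y(X))=\theta\circ\lambda(X)$ for $X\in\fk l_\lambda$. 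The homomorphism property of $\eta$ is used here in an essential way that does not reduce to the special form $\eta=\theta_\nu$.
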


\begin{proof}
%We claim that it is no loss of generality to assume that $\lambda$ is fully-ramified.  To see this, 
Let $\fk s = \olfks_\lambda$ and $S = \ols_\lambda$ and write $\mu = \lambda \downarrow \fk s \in \fk s^*$.  By Proposition \ref{less} we have $\psi_\lambda = \Ind_S^G(\psi_\mu)$ and so by the discussion above  $\psi_\lambda^\cF = \Ind_S^G(\psi_\mu^\cF)$.  By Frobenius reciprocity, 
\[ \left\langle \psi_\lambda^\cF,\eta \right\rangle_G = \left\langle \psi_\mu^\cF,\Res_S^G(\eta) \right\rangle_S
\qquad\text{and}\qquad
\left\langle \chi_\lambda,\eta \right\rangle_G \geq \left\langle \xi_\lambda,\eta \right\rangle_G = \left\langle \chi_\mu,\Res_S^G(\eta) \right\rangle_S \geq 0.\]
Thus $\left\langle \chi_\lambda,\eta \right\rangle_G=0$ implies $\left\langle \chi_\mu,\Res_S^G(\eta) \right\rangle_S =0$ and $\left\langle \psi_\mu^\cF,\Res_S^G(\eta) \right\rangle_S=0$ implies $\left\langle \psi_\lambda^\cF,\eta \right\rangle_G=0$.  Since  $\Res_S^G(\eta) \in \Irr(S)$ is a linear character and $\mu \in \fk s^*$ is fully-ramified, it suffices to prove the lemma when $\lambda \in \fkn^*$ is fully ramified.  % is no loss of generality to assume that $\lambda$ is %and 
%\[ \langle \psi_\lambda^\cF,\eta \rangle_G =\langle \psi_\mu^\cF,\Res_S^G(\eta) \rangle_S\qquad\text{and}\qquad 
%\langle \chi_\lambda,\eta \rangle_G =\langle \chi_\mu,\Res_S^G(\eta) \rangle_S,\] 
%our claim follows. % by Frobenius reciprocity.% as $\mu \in \fk s^*$ is fully-ramified.
%
%Thus assume $\lambda$ is

We thus assume $G\lambda G=G\lambda =\lambda G$ so that  $\fkn = \fk s_\lambda$ and $\fk l_\lambda$ is a two-sided ideal in $\fkn$.  By definition, we then have $\lambda(XY) =\lambda(YX)= 0$ for all $X \in \fk l_\lambda$ and $Y \in \fkn$.  Since $\eta$ is a class function and polynomials commute with conjugation,  
\be\label{thata}\ba  \left\langle \psi_\lambda^\cF,\eta\right \rangle_G &% =\frac{1}{|G|} \sum_{X \in \fkn} \frac{\sqrt{ |\lambda^G| } }{|G|} \sum_{g \in G} \theta\circ \lambda^g(X) \cdot \overline{\eta\( \cF(X)\)}
%\\
%&
= \frac{\sqrt{|\lambda^G|}}{|G|}\sum_{X \in \fkn} \theta\circ \lambda(X) \cdot \overline{\eta\(\cF(X)\)} = \sqrt{|\lambda^G|}\cdot\left  \langle \theta_\lambda^\cF, \eta\right\rangle_G 
.\ea\ee
Observe that if $X,Y \in \fkn$ then $\cF(X+Y) = \cF(Y) + X+ f(X,Y)$ for a polynomial $f(x,y) \in \FF_q[x,y]$ whose monomial terms all have degree at least two and all involve at least one factor of $x$.  Thus
\[\cF_Y(X) \overset{\mathrm{def}}= \cF(X+Y)\cdot \cF(Y)^{-1} = 1+(X +f(X,Y))\cdot \cF(Y)^{-1}= 1+X+\wt f(X,Y)\] where $\wt f(x,y) \in \FF_q[x,y]$ has the same properties as $f(x,y)$.  It follows that if $X \in \fk l_\lambda$ and $Y \in \fkn$ then $\wt f(X,Y) \in \ker(\lambda) \cap \fk l_\lambda$ and we have
$ \theta_\lambda\(\cF_Y(X)\) = \theta\circ \lambda(X)$.  Furthermore, if $Y \in \fkn$ is fixed then
$X \mapsto \cF_Y(X)$ defines a bijection $\fk l_\lambda \to L_\lambda$ since the map is clearly invertible.% (multiply by $\cF(Y)$ on the right, apply the inverse of $\cF$ to $\cF(X+Y)$, and then subtract $Y$).  

Using the preceding observations with the fact that $\eta$ is a homomorphism, we now compute
\[\ba  \left\langle \theta_\lambda^\cF, \eta\right\rangle_G& = \frac{1}{|L_\lambda||G|} \sum_{X \in \fk l_\lambda} \sum_{Y \in \fkn} \theta\circ \lambda(X+Y) \cdot \overline{\eta\( \cF(X+Y)\)}
\\
&
= \frac{1}{|G|} \sum_{Y \in \fkn} \theta\circ\lambda(Y) \cdot \overline{\eta\(\cF(Y)\)}\( \frac{1}{|L_\lambda|} \sum_{X \in \fk l_\lambda} \theta_\lambda\(\cF_Y(X)\)  \cdot \overline{\eta\(\cF_Y(X)\)}\)
\\
&
= \frac{1}{|G|} \sum_{Y \in \fkn} \theta_\lambda(1+Y) \cdot \overline{\eta\(\cF(Y)\)} \cdot \left\langle \theta_\lambda, \Res_{L_\lambda}^G(\eta)\right\rangle_{L_\lambda}
\\
&
= \left\langle\theta_\lambda^\cF,\eta\right\rangle_{G} \cdot \left\langle\chi_\lambda,\eta\right\rangle_{G}.
\ea
\]  %Note that since $\eta$ is linear,  $\langle \chi_\lambda,\eta\rangle_G \in \{0,1\}$ \emph{a priori}, so this equation makes sense.  
We conclude from (\ref{thata}) that %$\left\langle \psi_\lambda^\cF,\eta\right\rangle_G \cdot \(1- \left\langle \chi_\lambda,\eta\right\rangle_G\)=0$, and hence that
if $\eta \in \Irr(G)$ is linear then $\left\langle \chi_\lambda,\eta\right\rangle_G=0$ implies $\left\langle \psi_\lambda^\cF,\eta\right\rangle_G=0$.  
\end{proof}

We are now able to prove our theorem.  In the next two results, let $\cF : \fkn \to G$ be a polynomial map of the form (\ref{polynomial}).  Observe that this result is a more precise version of the third theorem promised in the introduction.

\begin{theorem}\label{otto-gen}% Let $\fkn$ be a finite-dimensional nilpotent $\FF_q$-algebra and write $G=1+\fkn$.  Suppose $\cF : \fkn \to G$ is a polynomial map of the form 
%\[ \cF(X) = 1 + X + \sum_{i\geq 2} a_i X^i,\qquad\text{where }a_i \in \FF_q.\]  Then $\cF$ is a bijection, and 
  If $\lambda \in \fkn^*$ and $\vartheta_1,\dots,\vartheta_m \in \Irr(G)$ are the irreducible constituents of the character $\xi_\lambda$, then there exist complex numbers $c_i$ such that $\psi_\lambda^\cF = \sum_{i=1}^m c_i \vartheta_i$.  
\end{theorem}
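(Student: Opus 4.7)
The plan is to deduce the theorem from the preceding lemma by first reducing to the case where $\lambda$ is fully ramified (so that $\ols_\lambda = G$ and $\xi_\lambda = \chi_\lambda$), then handling that case by induction on $\dim\fkn$.

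For the reduction, I would set $\mu = \lambda\downarrow\olfks_\lambda$, which is fully ramified in $\ols_\lambda$ as observed after equation (\ref{chain}). Proposition \ref{less}(1) together with the identity $\Ind_H^G(\psi)^\cF = \Ind_H^G(\psi^\cF)$ noted in Section \ref{infl-sec} gives $\psi_\lambda^\cF = \Ind_{\ols_\lambda}^G(\psi_\mu^\cF)$, and unwinding the definitions shows $\xi_\lambda = \Ind_{\ols_\lambda}^G(\chi_\mu)$. By Corollary \ref{andre-cor}, induction from $\ols_\lambda$ to $G$ restricts to a bijection $\Irr(\ols_\lambda,\chi_\mu) \to \Irr(G,\xi_\lambda)$. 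Applying $\Ind_{\ols_\lambda}^G$ to any expansion of $\psi_\mu^\cF$ as a $\CC$-linear combination of elements of $\Irr(\ols_\lambda,\chi_\mu)$ therefore produces the desired expansion of $\psi_\lambda^\cF$, so the theorem reduces to its fully ramified instance in $\ols_\lambda$.

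For the fully ramified case, I would induct on $\dim\fkn$, aiming to show $\langle\psi_\lambda^\cF,\eta\rangle_G = 0$ for every $\eta \in \Irr(G)\setminus\Irr(G,\chi_\lambda)$. If $\eta$ is linear, this is exactly the preceding lemma. If $\eta$ is nonlinear, the theorem of Isaacs and Halasi cited in the introduction writes $\eta = \Ind_H^G(\alpha)$ for a linear character $\alpha$ of a proper algebra subgroup $H = 1+\h$. Frobenius reciprocity combined with Remark (iii) of Section \ref{alg} and Observation \ref{obs1} then yields
\[ 0 = \langle\chi_\lambda,\eta\rangle_G = \sum_\nu c_\nu \langle\chi_\nu,\alpha\rangle_H \qquad\text{and}\qquad \langle\psi_\lambda^\cF,\eta\rangle_G = \sum_\nu d_\nu \langle\psi_\nu^\cF,\alpha\rangle_H, \]
with $c_\nu, d_\nu \in \ZZ_{\geq 0}$ and $\nu$ ranging over appropriate orbit representatives in $\h^*$. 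Nonnegativity of each term of the first sum forces $c_\nu > 0 \Rightarrow \alpha\notin\Irr(H,\chi_\nu)$; the explicit formulas for $\psi_\lambda$ and $\chi_\lambda$ together with the inclusion $\lambda^G\subset G\lambda G$ force $d_\nu > 0 \Rightarrow c_\nu > 0$; and the inductive hypothesis applied to the smaller group $H$ and the functional $\nu$ gives $\psi_\nu^\cF \in \CC\text{-span}\,\Irr(H,\xi_\nu) \subset \CC\text{-span}\,\Irr(H,\chi_\nu)$, whence $\alpha\notin\Irr(H,\chi_\nu)$ implies $\langle\psi_\nu^\cF,\alpha\rangle_H = 0$. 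Chaining these three implications makes the right-hand sum collapse to zero.

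The main obstacle is the nonlinear case just described: the lemma's proof uses $\eta$ as a homomorphism in an essential way, so extending it to arbitrary irreducibles requires the induction, and the key bookkeeping task is matching how coadjoint orbits and two-sided orbits in $\h^*$ respectively index the constituents of $\Res_H^G(\psi_\lambda^\cF)$ and $\Res_H^G(\chi_\lambda)$ — an alignment that ultimately rests on $\lambda^G\subset G\lambda G$. Once this matching is in place, the lemma supplies the linear base case and the induction runs cleanly.
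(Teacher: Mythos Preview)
Your proof is correct and follows essentially the same route as the paper's: the reduction via Proposition~\ref{less}(1) and Corollary~\ref{andre-cor} matches the paper's ``general case,'' and your handling of the fully ramified case (induction on $\dim\fkn$, the lemma for linear $\eta$, Halasi's theorem plus Frobenius reciprocity and $\Res_H^G(\psi^\cF)=\Res_H^G(\psi)^\cF$ for nonlinear $\eta$, with the key implication $d_\nu>0\Rightarrow c_\nu>0$ coming from $\lambda^G\subset G\lambda G$) is exactly Otto's argument that the paper invokes. One cosmetic point: since in the nonlinear step you apply the hypothesis to arbitrary (not necessarily fully ramified) $\nu\in\h^*$, the induction is cleanest if phrased for the full theorem on all algebras of dimension $<\dim\fkn$, with your reduction step absorbed into the inductive argument.
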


\begin{proof}  If  $\xi_\lambda=\chi_\lambda$ then one proceeds exactly as in the proof of \cite[Theorem 2.5]{O},  using the preceding lemma in place of \cite[Lemma 2.4]{O} and noting that $\Res_H^G(\psi^\cF) = \Res_H^G(\psi)^\cF$ for any algebra subgroup $H\subset G$.  The general case follows from Corollary \ref{andre-cor} using that $\Ind_H^G(\psi^\cF) = \Ind_H^G(\psi)^\cF$ along with part (1) in Proposition \ref{less}.  \end{proof}

While the first part of Proposition \ref{less} holds for the Kirillov functions $\psi_\lambda^\cF$ by elementary considerations, we are only now able to generalize the second part.

\begin{corollary}\label{less-cor}
If $\lambda \in \fkn^*$ and $\mu =\lambda \downarrow \olfks_\lambda$, then $\psi_\lambda^\cF \in \Irr(G)$ if and only if $\psi_\mu^\cF \in \Irr(\ols_\lambda)$.

%then $\xi_\lambda(1+X) = \xi_\lambda\circ \exp(X)$ for all $X \in \fkn$ and $\xi_\lambda$ is irreducible if and only if $\xi_\lambda = \psi_\lambda = \logpsi_\lambda$.
\end{corollary}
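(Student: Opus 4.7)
The plan is to descend to the subgroup $S = \ols_\lambda$, where by construction $\mu = \lambda\downarrow \olfks_\lambda$ is already fully ramified, apply the newly-established Theorem~\ref{otto-gen} there, and then push the expansion forward to $G$ by induction.

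First I would observe that running the iterative chain of Section~\ref{fk} on $\mu$ inside the algebra $\olfks_\lambda$ collapses at once: because $\mu$ is fully ramified, the analogue of $S_\mu$ within $S$ is all of $S$, which forces $\fk s_\mu^0 = \fk s_\mu^1 = \olfks_\lambda$. Consequently $\ols_\mu = S$, $\oll_\mu = L_\mu$, and $\xi_\mu = \Ind_{L_\mu}^{S}(\theta_\mu) = \chi_\mu$. Corollary~\ref{andre-cor} then produces a bijection $\vartheta_i \mapsto \Phi_i := \Ind_S^G(\vartheta_i)$ between the distinct irreducible constituents $\vartheta_1,\ldots,\vartheta_m$ of $\chi_\mu$ in $S$ and the distinct irreducible constituents $\Phi_1,\ldots,\Phi_m$ of $\xi_\lambda$ in $G$.

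Next, I would apply Theorem~\ref{otto-gen} with the algebra group $S$ and the linear functional $\mu$ to obtain complex scalars $c_i$ with $\psi_\mu^\cF = \sum_i c_i\, \vartheta_i$. Inducing to $G$ and combining the identity $\Ind_S^G(\psi)^\cF = \Ind_S^G(\psi^\cF)$ recalled before Proposition~\ref{well} with Proposition~\ref{less}(1), I would conclude
\[ \psi_\lambda^\cF \;=\; \Ind_S^G(\psi_\mu)^\cF \;=\; \Ind_S^G(\psi_\mu^\cF) \;=\; \sum_{i=1}^m c_i\, \Phi_i. \]

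Since polynomial Kirillov functions form an orthonormal basis of class functions by Observation~\ref{obs1}, we have $\sum_i |c_i|^2 = \langle \psi_\mu^\cF, \psi_\mu^\cF\rangle_S = 1$, and the identical relation $\sum_i |c_i|^2 = \langle \psi_\lambda^\cF, \psi_\lambda^\cF\rangle_G = 1$ on the $G$-side (both being consistent rewrites of the same equation). Because the $\vartheta_i$ are distinct irreducibles of $S$ and the $\Phi_i$ are distinct irreducibles of $G$, the expansion $\sum c_i \vartheta_i$ lies in $\Irr(S)$ precisely when some single $c_j$ equals $1$ and the remaining scalars vanish -- a condition on the $c_i$ alone that simultaneously governs when $\sum c_i \Phi_i$ lies in $\Irr(G)$. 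Therefore $\psi_\mu^\cF \in \Irr(S)$ if and only if $\psi_\lambda^\cF \in \Irr(G)$, as required. The only non-trivial input is Theorem~\ref{otto-gen}, which has already been established; no further obstacle remains and the rest is orthogonality bookkeeping.
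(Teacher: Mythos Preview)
Your proposal is correct and follows essentially the same route as the paper's own proof: apply Theorem~\ref{otto-gen} to $\mu$ on $\ols_\lambda$ (where $\xi_\mu=\chi_\mu$), induce the resulting expansion to $G$ via Proposition~\ref{less}(1) and the $\cF$-compatibility of induction, and then use orthogonality of the $\vartheta_i$ and $\Phi_i$ to read off that irreducibility on either side is the same condition on the coefficients $c_i$. The paper compresses all of this into a reference to ``the argument in the proof of Proposition~\ref{less},'' but what you have written is exactly that argument spelled out.
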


\begin{proof}
Now that we know that $\psi_\mu^\cF$ is a linear combination of the characters in $\Irr(\ols_\lambda,\chi_\mu)$, we may deduce this result using the argument in the proof of Proposition \ref{less}. 
\end{proof}

Our next and final result could have appeared immediately after Proposition \ref{less}.  However, the statement of Corollary \ref{less-cor} helpfully diminishes the amount of discussion required in its proof.

\begin{proposition}\label{exp-criterion}  If $\lambda \in \fkn^*$ and $(\olfks_\lambda)^p \subset \olfkl_\lambda \cap \ker \lambda$ then $\psi_\lambda^\exp \in \Irr(G)$.
\end{proposition}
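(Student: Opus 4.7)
The plan is to first reduce to the fully ramified case via Corollary~\ref{less-cor}, then exploit the hypothesis to quotient by a two-sided ideal and invoke Sangroniz's theorem (Theorem~\ref{Corollary 3}).

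First I will apply Corollary~\ref{less-cor} with $\cF = \exp$, reducing the problem to showing $\psi_\mu^\exp \in \Irr(\ols_\lambda)$ where $\mu = \lambda\downarrow\olfks_\lambda$. Abbreviate $\fk s = \olfks_\lambda$ and $S = \ols_\lambda$; as noted in the discussion preceding Corollary~\ref{andre-cor}, $\mu$ is fully ramified on $\fk s$ by construction of the terminal element of the chain \eqref{chain}. Restricting the given hypothesis to $\fk s$ gives $\fk s^p \subset \ker \mu$ (the containment $\fk s^p \subset \fk l_\mu$ is then automatic, since for $X\in \fk s^p$ and $Y\in \fk s$ one has $XY\in \fk s^{p+1}\subset \fk s^p\subset \ker\mu$).

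Next I observe that $\fk s^p$ is itself a two-sided ideal of $\fk s$, since any product of $p+1$ factors from $\fk s$ may be regrouped by collapsing its first two factors into a single element of $\fk s$, so $\fk s \cdot \fk s^p \subset \fk s^{p+1} \subset \fk s^p$ (and likewise on the right). Hence the quotient $\bar{\fk s} = \fk s/\fk s^p$ is a nilpotent $\FF_q$-algebra satisfying $\bar{\fk s}^p = 0$, and since $\fk s^p \subset \ker \mu$ the linear functional $\mu$ descends to some $\bar\mu \in \bar{\fk s}^*$.

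Finally, Theorem~\ref{Corollary 3} applied to $\bar S = 1 + \bar{\fk s}$ yields $\psi_{\bar\mu}^\exp \in \Irr(\bar S)$, while Observation~\ref{inflation} (applied to the ideal $\fk s^p \subset \fk s$) gives $\psi_\mu^\exp = \psi_{\bar\mu}^\exp \circ \pi$ for the quotient homomorphism $\pi\colon S \to \bar S$. Since inflation of an irreducible character along a surjective homomorphism remains irreducible, $\psi_\mu^\exp \in \Irr(S)$, and the initial reduction then delivers $\psi_\lambda^\exp \in \Irr(G)$. No serious obstacle arises; the only modest task is tracking the hypothesis through the reduction and verifying that $\exp$ is compatible with both the quotient by $\fk s^p$ (Observation~\ref{inflation}) and induction from $\ols_\lambda$ to $G$ (Corollary~\ref{less-cor}), both of which are handled by the cited results.
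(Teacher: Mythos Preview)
Your proof is correct and follows essentially the same strategy as the paper: reduce to $\ols_\lambda$ via Corollary~\ref{less-cor}, pass to a quotient algebra with $p$th power zero, apply Sangroniz's theorem, and inflate back via Observation~\ref{inflation}. The only cosmetic difference is that you quotient by the ideal $\fk s^p$ whereas the paper quotients by the (generally larger) ideal $\fk k_\mu = \olfkl_\lambda \cap \ker\lambda$; both choices work since the hypothesis gives $\fk s^p \subset \fk k_\mu \subset \ker\mu$, and your parenthetical observation that the containment $\fk s^p \subset \olfkl_\lambda$ is already forced by $\fk s^p \subset \ker\mu$ is a nice bonus the paper does not mention.
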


\begin{proof}
Let $\fk l = \olfkl_\lambda$ and $\fk s=\olfks_\lambda$.  
Write $\mu = \lambda \downarrow \fk s$ and recall that $\fk k_\mu\overset{\mathrm{def}} = \fk l \cap \ker \mu %= \fk l \cap \ker \lambda
= \fk l \cap \ker \lambda$ is a two-sided ideal in $\fk s$.  % because $\fk l$ is two-sided ideal and $\mu(XY) = \mu(YX) = 0$ for all $X \in \fk l$ and $Y\in \fk s$ since $\mu \in \fk s^*$ is fully-ramified by construction.  
Let $\q = \fk s / \h$ be the quotient algebra;  by hypothesis, $\q^p= 0$ so every exponential Kirillov function of $Q = 1+\q$ is a character by Theorem \ref{Corollary 3}.   Obviously $\ker \mu \supset \fk k_\mu$ so it follows from Observation \ref{inflation} that $\logpsi_\mu$ is an irreducible character of $\ols_\lambda = 1+\fk s$, and hence  that $\logpsi_\lambda$ is an irreducible character of $G$ by the preceding corollary.
\end{proof}

\def\id{\mathrm{id}}

\end{document}